\newtheorem{Lemma}{Lemma}[section]
\newtheorem{Theorem}[Lemma]{Theorem}
\newtheorem{Corollary}[Lemma]{Corollary}
\theoremstyle{definition}
\newtheorem{Remark}[Lemma]{Remark}
\newtheorem*{Remark*}{Remark}
\newcommand{\thref}[1]{Theorem \ref{#1}}
\newcommand{\leref}[1]{Lemma \ref{#1}}
\newcommand{\reref}[1]{Remark \ref{#1}}
\newcommand{\coref}[1]{Corollary \ref{#1}}
\newcommand{\seref}[1]{Section \ref{#1}}
\newcommand{\ssref}[1]{Subsection \ref{#1}}
\numberwithin{equation}{section}
\newcommand{\Cset}{\mathbb{C}}
\newcommand{\Nset}{\mathbb{N}}
\newcommand{\Zset}{\mathbb{Z}}
\newcommand{\Tset}{{\mathbb T}}
\newcommand{\At}{{\tilde{A}}}
\newcommand{\cEt}{{\tilde{\mathcal{E}}}}
\newcommand{\Ct}{{\tilde{C}}}
\newcommand{\It}{{\tilde{I}}}
\newcommand{\Eh}{{\hat{E}}}
\newcommand{\Eht}{\tilde{{\hat{E}}}}
\newcommand{\G}{{\Gamma}}
\newcommand{\Gt}{{\tilde{\Gamma}}}
\newcommand{\cK}{{\mathcal K}}
\newcommand{\cKt}{{\tilde{\mathcal K}}}
\newcommand{\Kt}{\tilde{K}}
\newcommand{\phit}{\tilde{\phi}}
\newcommand{\psit}{\tilde{\psi}}
\newcommand{\phitr}{\overleftarrow{\tilde{\phi}}}
\newcommand{\psitr}{\overleftarrow{\tilde{\psi}}}
\newcommand{\Phit}{\tilde{\Phi}}
\newcommand{\Psit}{\tilde{\Psi}}
\newcommand{\phir}{\overleftarrow{\phi}}
\newcommand{\psir}{\overleftarrow{\psi}}
\newcommand{\Phir}{\overleftarrow{\Phi}}
\newcommand{\Phitr}{\overleftarrow{\tilde{\Phi}}}
\newcommand{\Phip}{\Phi^{p}}
\newcommand{\Phiq}{\Phi^{q}}
\newcommand{\phib}{\bar{\phi}}
\newcommand{\Span}{\mathrm{span}}
\newcommand{\pb}{\bar{p}}
\newcommand{\qb}{\bar{q}}
\newcommand{\pr}{\overleftarrow{p}}
\newcommand{\qr}{\overleftarrow{q}}
\newcommand{\hr}{\overleftarrow{h}}
\newcommand{\zb}{\bar{z}}
\newcommand{\wb}{\bar{w}}
\newcommand{\Pinm}{\Pi^{n,m}}
\newcommand{\PiNM}{\Pi^{N,M}}
\newcommand{\rank}{\mathrm{rank}}
\newcommand{\revz}[1]{\overleftarrow{{#1}^{\theta}}}
\newcommand{\cL}{\mathcal{L}}
\newcommand{\cLz}{\mathcal{L}^{\theta}}
\newcommand{\cLp}{\mathcal{L}_{p}}
\newcommand{\cLq}{\mathcal{L}_{q}}
\newcommand{\phiz}[1]{{\phi_{#1}^{\theta}}}
\newcommand{\Cz}[1]{C_{#1}^{\theta}}
\newcommand{\cz}[1]{c_{#1}^{\theta}}
\newcommand{\Ut}{\tilde{U}}
\newcommand{\ut}{\tilde{u}} 
\newcommand{\Vt}{\tilde{V}}
\newcommand{\nn}{\nonumber}
\begin{document}
\title[Fej\'er-Riesz factorizations and bivariate polynomials]{Fej\'er-Riesz factorizations and the structure of bivariate polynomials
orthogonal on the bi-circle}

\author[J.~Geronimo]{Jeffrey~S.~Geronimo}
\thanks{JG is supported in part by Simons Foundation Grant \#210169.}
\address{JG, School of Mathematics, Georgia Institute of Technology,
Atlanta, GA 30332--0160, USA}
\email{geronimo@math.gatech.edu}

\author[P.~Iliev]{Plamen~Iliev}
\thanks{PI is supported in part by NSF Grant \#0901092.}
\address{PI, School of Mathematics, Georgia Institute of Technology,
Atlanta, GA 30332--0160, USA}
\email{iliev@math.gatech.edu}

\begin{abstract}
We give a complete characterization of the positive trigonometric polynomials $Q(\theta,\varphi)$ on the bi-circle, which can be factored as $Q(\theta,\varphi)=|p(e^{i\theta},e^{i\varphi})|^2$ where $p(z, w)$ is a polynomial nonzero for $|z|=1$ and $|w|\leq1$. 
The conditions are in terms of recurrence coefficients associated with the polynomials in lexicographical and reverse lexicographical ordering orthogonal with respect to the weight $\frac{1}{4\pi^2Q(\theta,\varphi)}$ on the bi-circle. We use this result to describe how specific factorizations of weights on the bi-circle can be translated into identities relating the recurrence coefficients for the corresponding polynomials and vice versa. In particular, we characterize the Borel measures on the bi-circle for which the coefficients multiplying the reverse polynomials associated with the two operators: multiplication by $z$ in lexicographical ordering and multiplication by $w$ in reverse lexicographical ordering vanish after a particular point. This can be considered as a spectral type result analogous to the characterization of the Bernstein-Szeg\H{o} measures on the unit circle.
\end{abstract}

\date{June 6, 2012}

\maketitle

\section{Introduction}

The factorization of positive polynomials as a sum of squares of polynomials or rational functions is an important problem in mathematics and led Hilbert to pose his 17th problem which was solved by Artin. In the case of trigonometric polynomials one of the simplest factorization results is 
the lemma of Fej\'er-Riesz which states that every positive trigonometric polynomial $Q_n(\theta)$ of degree $n$ can be written as $Q_n(\theta)=|p_n(e^{i\theta})|^2$ where $p_n(z)$ is a polynomial of degree $n$ in $z$. This result has been useful for the trigonometric moment problem, orthogonal polynomials, wavelets, and signal processing. 

Extensions of this result to the multivariable case cannot be generic as a simple degree of freedom calculation on the coefficients shows. Recently
\cite{GW1} these results have been extended to two variable factorizations 
\begin{equation}\label{1.1}
Q_{n,m}(\theta,\varphi)=|p_{n,m}(e^{i\theta},e^{i\varphi})|^2
\end{equation}
where $n$ and $m$ are the degrees of $Q_{n,m}$ in $\theta$ and $\varphi$, respectively, in the case when $p_{n,m}(z,w)$ is a polynomial of degree $n$ in $z$ and $m$ in $w$ which is nonzero for $|z|\leq 1$ and $|w|\leq1$. This augments results obtained earlier by Kummert \cite{Ku} (see also Ball \cite{B}), Cole and Wermer \cite{CW}, and Agler and McCarthy \cite{AM} (see also Knese \cite{Kn}).  In particular, using the results of Knese \cite{Kn1} it is easy to see that, except for certain special cases, the polynomials $p_{n,m}$ in \eqref{1.1} cannot be associated with the distinguished varieties defined by  Agler and McCarthy \cite{AM}. Some extensions to more than two variables of the above  results have also recently been  obtain by Grinshpan et al \cite{GKVW}, Bakonyi and Woerdeman \cite{BW}, and Woerdeman \cite{W}. In this paper we extend the results in \cite{GW1} in a different direction.
We characterize completely positive trigonometric polynomials $Q_{n,m}(\theta,\varphi)$, which can be factored as in \eqref{1.1} where $p_{n,m}(z, w)$ is a 
polynomial which is nonzero for $|z|=1$ and $|w|\leq1$. The conditions can be written in a relatively simple form if we use the orthogonal polynomials in lexicographical and reverse lexicographical ordering introduced in \cite{GW2} with respect to the weight $\frac{1}{4\pi^2Q_{n,m}(\theta,\varphi)}$ on the bi-circle. More precisely, in \thref{th2.4} we prove that \eqref{1.1} holds if and only if certain matrices (which represent recurrence coefficients) $\cK_{n,m}$, $\cK^1_{n,m}$, $\Gt_{n,m}$, $\Gt^1_{n,m}$ satisfy the equations
\begin{equation}\label{1.2}
\cK_{n,m}[\Gt^1_{n,m}\Gt^{\dagger}_{n,m}]^{j}(\cK^1_{n,m})^T=0, \text{ for }j=0,1,
\dots,n-1.
\end{equation} 
There are two important cases when equation \eqref{1.2} holds:
\begin{itemize}
\item[(i)] The case when $\cK_{n,m}=0$ characterizes the {\em stable factorizations} of $Q_{n,m}$ discussed in \cite{GW1} (i.e. equation \eqref{1.1} holds with a polynomial $p_{n,m}(z,w)$ which is nonzero for $|z|\leq 1$ and $|w|\leq1$).
\item[(ii)] The case when $\cK^{1}_{n,m}=0$ characterizes the {\em anti-stable factorizations} of $Q_{n,m}$. In this case \eqref{1.1} holds with a polynomial $p_{n,m}(z,w)$ such that $z^np_{n,m}(1/z,w)\neq0$ for $|z|\leq 1$ and $|w|\leq1$).
\end{itemize}
We derive several corollaries of the above result which are of independent interest. For instance, 
we characterize the Borel measures on the
bi-circle for which the recurrence coefficients $\Eh_{k,l}$, $\Eht_{k,l}$ multiplying the reverse polynomials associated with the two operators: multiplication by $z$ in lexicographical ordering and multiplication by $w$ in reverse lexicographical vanish after a particular point, see \thref{th2.10}. This can be considered as a spectral theory type result analogous to the characterization of  Bernstein-Szeg\H{o} measures on the circle.  We also show that in this case the space of orthogonal polynomials can be decomposed as an appropriate direct sum of two sets of orthogonal polynomials associated with the stable and the anti-stable factorizations described above, see \thref{th2.7}. 

The paper is organized as follows. In \seref{se2} we introduce the notation used throughout the paper including the recurrence formulas and state the main theorems.  In \seref{se3} some preliminary results are proved and certain relations among the recurrence coefficients developed in \cite{GB} and their consequences are discussed. In \seref{se4} we prove the first main theorem which yields the factorizations \eqref{1.1} with $p_{n,m}(z,w)$ nonzero for $|z|=1$ and $|w|\leq1$. In the forward direction, we use the Gohberg-Semencul formula, parametric and matrix-valued orthogonal polynomials to show that if \eqref{1.1} holds, then the recurrence coefficients $\Eh_{k,l}$ for the polynomials in lexicographical ordering associated with the weight $\frac{1}{4\pi^2 Q_{n,m}(\theta,\varphi)}$ vanish after a particular point. This leads to \eqref{1.2}. The heart of the proof in the opposite direction is based on a very subtle decomposition of the space of polynomials in the reverse lexicographical ordering as the  sum of two subspaces possessing a lot of extra orthogonality properties. Using this decomposition, we construct an appropriate rotation on the space of polynomials which gives the polynomial $p_{n,m}(z,w)$ satisfying \eqref{1.1}. All these constructions are missing in the stable case: the space decomposition is trivial (one of the subspaces is empty) and the rotation is simply the identity transformation. Thus, in our construction, the polynomial $p_{n,m}(z,w)$ is no longer the first column of the inverse of the Toeplitz matrix associated with the trigonometric moments, but instead is a linear combination of the columns in the first block column of this matrix. One can use also the general theory of Helson and Lowdenslager \cite{HL} and the constructions in Delsarte et al \cite{DGK2} to obtain factorizations of positive functions $Q(\theta,\varphi)$ on the bi-circle. Note, however, that their approach works in a rather general setting and will provide (in general) non-polynomial factorizations of $Q_{n,m}$, even when
  \eqref{1.1} holds with a polynomial $p_{n,m}(z,w)$. In \seref{se5} we prove all remaining statements and corollaries. In \seref{se6} some examples are presented as illustrations of the main theorems.

\section{Statement of results}\label{se2}
\subsection{Basic notations}\label{ss2.1}
We denote $\Tset=\{z\in\Cset:|z|=1\}$ the unit circle and 
$$\Tset^2=\{(z,w):|z|=|w|=1\},$$
the bi-circle (torus) in $\Cset^2$. Throughout the paper, we will use the 
parametrization 
$z=e^{i\theta}$ and $w=e^{i\varphi}$, where $\theta,\varphi\in[-\pi,\pi]$.

We consider moment matrices associated with the
lexicographical ordering which is defined by
$$
(k,\ell)<_{\rm lex} (k_1,\ell_1)\Leftrightarrow k<k_1\mbox{ or }
(k=k_1\mbox{ and } \ell<\ell_1),
$$
and the reverse lexicographical ordering defined by
$$
(k,\ell)<_{\rm revlex} (k_1,\ell_1)\Leftrightarrow
(\ell,k)<_{\rm lex} (\ell_1,k_1).
$$
Both of these orderings are linear orders and in addition they satisfy
$$
(k,\ell)<(m,n)\Rightarrow (k+p,\ell+q)<(m+p,n+q).
$$
Let $\Pinm$ denote the bivariate Laurent linear subspace
$\Span \{z^kw^l,\, -n\le k\le n,\,-m\le l\le m\}$ and let
$\cL$ be a linear functional defined on $\Pinm$ such that
$$
\cL(z^{-k}w^{-l})=c_{k,l}= \overline{\cL(z^k w^l)}.
$$
We will call $c_{k,l}$ the $(k,l)$ moment of $\cL$ and
$\cL$ a moment functional.  If we form the
$(n+1)(m+1)\times(n+1)(m+1)$ matrix $C_{n,m}$ for $\cL$ in
the lexicographical ordering then it has the special block Toeplitz form
\begin{equation}\label{2.1}
C_{n,m} = \left[
\begin{matrix}
C_0 & C_{-1} & \cdots & C_{-n}
\\
C_1 & C_0 & \cdots & C_{-n+1}
\\
\vdots &  & \ddots & \vdots \\ 
C_n & C_{n-1} & \cdots & C_{0}
\end{matrix}
\right],
\end{equation}
where each $C_k$ is an $(m+1)\times(m+1)$ Toeplitz matrix as
follows
\begin{equation}\label{2.2}
C_k=\left[
\begin{matrix}
c_{k,0}& c_{k,-1} & \cdots & c_{k,-m}
\\
\vdots & &\ddots & \vdots \cr c_{k,m} &  & \cdots &
c_{k,0}
\end{matrix}
\right],\qquad k=-n,\dots, n.
\end{equation}
Thus $C_{n,m}$ has a doubly Toeplitz structure. If the reverse
lexicographical ordering is used in place of the lexicographical
ordering we obtain another moment matrix $\Ct_{n,m}$ where
the roles of $n$ and $m$ are interchanged.
We say that the moment functional $\cL:\Pinm\to\Cset$
is positive  if
\begin{equation}\label{2.3}
\cL \left[p(z,w)\pb(1/z,1/w)\right]>0 
\end{equation}
for every nonzero polynomial $p(z,w)\in\Pinm\cap\Cset[z,w]$. Here and later we set 
$\pb(z,w)=\overline{p(\bar{z},\bar{w})}$. It
follows from a simple quadratic form argument that $\cL$ is
positive  if and only if its
moment matrix $C_{n,m}$ is positive definite.
We now perform the Gram-Schmidt procedure on the monomials using the 
lexicographical ordering. The study of orthogonal  polynomials on the bi-circle 
with this ordering was begun by Delsarte et al. \cite{DGK1} and extended in 
\cite{GW2}. Given a positive definite linear functional $\cL:\PiNM\to\Cset$ we 
perform the Gram-Schmidt procedure using the lexicographical ordering on the 
spaces $\Span\{z^kw^l:0\leq k\leq n,\,0\leq l\leq m\}$ where $n\leq N$, $m\leq N$. Thus we define the orthonormal 
polynomials
$\phi_{n,m}^s(z,w),\ 0\le n\le N,\, 0\le m\le M, \, 0\le s\le m,$
by the equations
\begin{equation}\label{2.4}
\begin{split}
&\cL(\phi_{n,m}^s(z,w) z^{-k}w^{-l})=0, \quad  0\le k<n\ \text{ and }\
0\le l\le m\text{ or }\ k=n \text{ and }\ 0\le l< s,\\
&\cL(\phi_{n,m}^s(z,w)\bar{\phi}_{n,m}^s(1/z,1/w))=1,
\end{split}
\end{equation}
and
\begin{equation}\label{2.5}
\phi_{n,m}^{s}(z,w) = k^{n,s}_{n,m,s} z^n w^s + \sum_{(k,l)<_{\rm
lex}(n,s)} k^{k,l}_{n,m,s}z^kw^l.
\end{equation}
With the convention $k^{n,s}_{n,m,s}>0$, the above equations uniquely
specify $\phi^s_{n,m}$. Polynomials orthonormal with respect to
$\cL$ but using the reverse lexicographical ordering will be
denoted by $\phit^s_{n,m}$. They are uniquely determined by the
above relations with the roles of $n$ and $m$ interchanged.
Set
\begin{equation}\label{2.6}
\Phi_{n,m}(z,w)=\left[\begin{matrix} \phi_{n,m}^{m}\\ \phi_{n,m}^{m-1}\\[-2pt]
\vdots\\ \phi_{n,m}^{0} \end{matrix}\right] =
K_{n,m}\left[\begin{matrix} z^n w^m\\ z^n w^{m-1}\\[-2pt] \vdots\\ 1\end{matrix}\right],
\end{equation}
where the $(m+1)\times(n+1)(m+1)$ matrix $K_{n,m}$ is given by
\begin{equation}\label{2.7}
K_{n,m}=\left[\begin{matrix} k_{n,m,m}^{n,m}&
k_{n,m,m}^{n,m-1}&\cdots & \cdots&\cdots& k_{n,m,m}^{0,0} \\ 0&
k_{n,m,m-1}^{n,m-1}&\cdots & \cdots&\cdots&
k_{n,m,m-1}^{0,0}\\\vdots &\ddots&\ddots&\ddots&\ddots&\ddots
\\0&\cdots& k_{n,m,0}^{n,0}& k_{n,m,0}^{n-1,m}&\cdots &
k_{n,m,0}^{0,0}\end{matrix}\right].
\end{equation}
As indicated above denote
\begin{equation}\label{2.8}
\Phit_{n,m} (z,w)=\left[\begin{matrix} \phit_{n,m}^{n}\\ 
\phit_{n,m}^{n-1}\\ \vdots\\ \phit_{n,m}^{0}\end{matrix}\right]
=\Kt_{n,m}\left[\begin{matrix}w^m z^n \\  w^m z^{n-1}\\[-3pt] \vdots\\ 1
\end{matrix}\right],
\end{equation}
where the $(n+1)\times(n+1)(m+1)$ matrix $\Kt_{n,m}$ is
given similarly to \eqref{2.7} with the roles of $n$ and $m$
interchanged. For the bivariate polynomials $\phi^s_{n,m}(z,w)$ above
we define the reverse polynomials $\phir^s_{n,m}(z,w)$ by the relation
\begin{equation}\label{2.9}
\phir^s_{n,m}(z,w)=z^n w^m\bar{\phi}_{n,m}^s(1/z,1/w).
\end{equation}
With this definition $\phir^s_{n,m}(z,w)$ is again a
polynomial in $z$ and $w$, and furthermore
\begin{equation}\label{2.10}
\Phir_{n,m}(z,w):= \left[\begin{matrix}\phir_{n,m}^{m}\\ 
\phir_{n,m}^{m-1}\\
\vdots\\ \phir_{n,m}^{0} \end{matrix}\right]^T.
\end{equation}
An analogous procedure is used to define $\phitr^s_{n,m}$. We use $M^{m,n}$ to 
denote the space of all $m\times n$ matrices. 
In \cite{GW2} it was shown:
\begin{Theorem}\label{recurrencefor}
Given $\{\Phi_{n,m}\}$ and $\{\Phit_{n,m}\}$, $0\le n\le N$,
$0\le m\le M$, the following recurrence formulas hold:
\begin{subequations}\label{2.11}
\begin{align}
& A_{n,m}\Phi_{n,m} = z\Phi_{n-1,m} - \Eh_{n,m}\Phir_{n-1,m}^T , \label{2.11a}
\\
&\Phi_{n,m}+ A^{\dagger}_{n,m}\Eh_{n,m}(A^T_{n,m})^{-1}\Phir_{n,m}^T=
A^{\dagger}_{n,m}z\Phi_{n-1,m}, \label{2.11b}
\\
&\G_{n,m} \Phi_{n,m} = \Phi_{n,m-1} - {\cK}_{n,m} \Phit_{n-1,m},
\label{2.11c}
\\
&\G_{n,m}^1 \Phi_{n,m} = w \Phi_{n,m-1} - {\cK}^1_{n,m}\Phitr_{n-1,m}^T,
\label{2.11d}
\\
&\Phi_{n,m}=I_{n,m} \Phit_{n,m} + \G^{\dagger}_{n,m} \Phi_{n,m-1}
\label{2.11e},
\\
&\Phir_{n,m}^T=I_{n,m}^1 \Phit_{n,m} + (\G^1_{n,m})^T\Phir_{n,m-1}^T
\label{2.11f},
\end{align}
\end{subequations}
where
\begin{subequations}\label{2.12}
\begin{align}
\Eh_{n,m} & = \langle z\Phi_{n-1,m},\Phir_{n-1,m}^T\rangle=\Eh_{n,m}^T \in
M^{m+1,m+1}, \label{2.12a}\\
A_{n,m} & = \langle z\Phi_{n-1,m},\Phi_{n,m}\rangle \in
M^{m+1,m+1}, \label{2.12b}\\
{\cK}_{n,m} & = \langle \Phi_{n,m-1}, \Phit_{n-1,m}\rangle \in
M^{m,n}, \label{2.12c}\\
\G_{n,m} & = \langle \Phi_{n,m-1}, \Phi_{n,m} \rangle \in M^{m,m+1},\label{2.12d}\\
{\cK}^1_{n,m} & = \langle w \Phi_{n,m-1}, \overleftarrow{\Phit}_{n-1,m}^T\rangle \in M^{m,n},\label{2.12e}\\
\G^1_{n,m} & = \langle w \Phi_{n,m-1}, \Phi_{n,m} \rangle \in
M^{m,m+1}, \label{2.12f}\\
I_{n,m} & = \langle \Phi_{n,m}, \Phit_{n,m}\rangle \in
M^{m+1,n+1}, \label{2.12g}\\
I^1_{n,m} & = \langle \Phir_{n,m}^T, \Phit_{n,m}\rangle \in
M^{m+1,n+1}. \label{2.12h}
\end{align}
\end{subequations}
\end{Theorem}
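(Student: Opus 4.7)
The plan is to establish each of the six identities in \eqref{2.11} by a common orthogonality-and-projection template: for each equation, recognize both sides as a single polynomial vector living in a common subspace characterized by orthogonality to certain low-order monomials, and then extract the coefficient matrices by pairing against the orthonormal bases $\Phi_{n,m}$, $\Phit_{n,m}$, and their reverses. The invertibility needed for \eqref{2.11b} will follow from positive-definiteness of $\cL$ and a ``unitary-type'' relation $A_{n,m}A_{n,m}^{\dagger}+\Eh_{n,m}\Eh_{n,m}^{\dagger}=I$ that one derives as a corollary of \eqref{2.11a} by pairing it against $z\Phi_{n-1,m}$.

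I would first treat \eqref{2.11a} as the prototype. Shifting the defining orthogonality $\cL(\phi^s_{n-1,m}\,z^{-k}w^{-l})=0$ by $z$, I observe that each entry of $z\Phi_{n-1,m}$ is already orthogonal to every $z^k w^l$ with $1\le k<n$ and $0\le l\le m$, so the only ``missing'' orthogonality direction is $\Span\{w^l:0\le l\le m\}$. The reverse polynomials $\{\phir^s_{n-1,m}\}_{s=0}^m$ are orthonormal (because the reversal $f\mapsto z^{n-1}w^m\bar f(1/z,1/w)$ preserves the Szeg\H{o} inner product on the bi-circle), and they project nontrivially onto $\Span\{w^l\}$. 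Subtracting $\Eh_{n,m}\Phir^T_{n-1,m}$ with $\Eh_{n,m}$ as in \eqref{2.12a} cancels these residual projections exactly, and the symmetry $\Eh_{n,m}^T=\Eh_{n,m}$ drops out of the explicit moment representation of the inner product. The residue is then orthogonal to every $z^k w^l$ with $k<n$, so it lies in $\Span\{\phi^0_{n,m},\dots,\phi^m_{n,m}\}$; pairing against $\Phi_{n,m}$ and using $\langle\Phir^T_{n-1,m},\Phi_{n,m}\rangle=0$ for bi-degree reasons identifies the coefficient as $A_{n,m}=\langle z\Phi_{n-1,m},\Phi_{n,m}\rangle$.

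The identity \eqref{2.11b} is proved by a parallel argument, this time expanding $A^{\dagger}_{n,m}z\Phi_{n-1,m}$ in the orthonormal basis $\{\Phi_{n,m},\,\Phir^T_{n,m}\}$ of the space of bi-degree $(n,m)$ polynomial vectors orthogonal to $\{z^k w^l : 1 \le k < n\}$; the transition factor $(A^T_{n,m})^{-1}$ appears when one relates the reversed basis $\Phir^T_{n-1,m}$ to $\Phir^T_{n,m}$ via the entrywise reversal of \eqref{2.11a}. The formulas \eqref{2.11c} and \eqref{2.11d} follow by the same template with the roles of $z$ and $w$ interchanged: the ``missing'' orthogonality direction in the reverse-lex ordering is supplied by $\Phit_{n-1,m}$ and $\Phitr^T_{n-1,m}$ respectively, and the coefficient matrices $\G_{n,m}, \cK_{n,m}, \G^1_{n,m}, \cK^1_{n,m}$ are read off as the inner products \eqref{2.12c}--\eqref{2.12f}. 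Finally, \eqref{2.11e} and \eqref{2.11f} are change-of-basis identities between the lex- and reverse-lex-ordered Gram--Schmidt families on the same polynomial space of bi-degree $(n,m)$; expanding $\Phi_{n,m}$ (respectively $\Phir^T_{n,m}$) in the basis $\{\Phit_{n,m},\Phi_{n,m-1}\}$ and invoking leading-monomial considerations to suppress lower blocks yields the stated formulas with coefficients $I_{n,m},\,\G^{\dagger}_{n,m}$ (respectively $I^1_{n,m},\,(\G^1_{n,m})^T$) recovered as the corresponding inner products. The main technical obstacle I foresee is the rigorous verification of the various direct-sum decompositions of the polynomial spaces that underpin these arguments, particularly the nontriviality of the complementary subspaces spanned by $\Phir^T_{n-1,m}$ or $\Phit_{n-1,m}$; all of these reduce to principal-submatrix positivity arguments on the doubly-Toeplitz moment matrices $C_{n,m}$ and $\Ct_{n,m}$.
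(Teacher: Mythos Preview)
The paper does not actually prove this theorem: it is quoted verbatim from \cite{GW2} with the attribution ``In \cite{GW2} it was shown,'' and no proof is given here. Your projection-and-orthogonality outline is the standard way these recurrences are established and is essentially the argument carried out in \cite{GW2}; the sketch is correct in its broad strokes, including the reason for $\Eh_{n,m}=\Eh_{n,m}^T$ (which indeed follows from the explicit expression $\langle z\phi^{s}_{n-1,m},\phir^{t}_{n-1,m}\rangle=\cL(z^{2-n}w^{-m}\phi^{s}_{n-1,m}\phi^{t}_{n-1,m})$) and the derivation of $A_{n,m}A_{n,m}^{\dagger}+\Eh_{n,m}\Eh_{n,m}^{\dagger}=I$ needed for the invertibility in \eqref{2.11b}.
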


\begin{Remark}
From now on we adapt the following convention. For every statement (resp. formula) we will refer to the analogous statement (resp. formula) with the roles of $z$ and $w$ exchanged as the tilde analog. For instance, the tilde analog of formula \eqref{2.11a} is 
$\At_{n,m}\Phit_{n,m} = w\Phit_{n,m-1} - \Eht_{n,m}\Phitr_{n,m-1}^T$.
\end{Remark}

Finally, we note that 
\begin{equation*}
\cKt_{n,m}=\cK_{n,m}^{\dagger}\quad\text{ and }\quad \cKt^{1}_{n,m}=(\cK^{1}_{n,m})^{T}.
\end{equation*}

\subsection{Main results}
We say that a polynomial $p(z,w)\in\Cset\left[z,w\right]$ 
is of degree $(n,m)$ where $n$ and $m$ are the minimal nonnegative integers 
such that $p(z,w)\in\Pinm$. We say that the polynomial $p(z,w)$ is {\em stable\/} if it does not vanish for 
$|z|\leq 1$ and $|w|\leq 1$. Similarly, for a trigonometric polynomial 
$Q(\theta,\varphi)=p(e^{i\theta},e^{i\varphi})$, we define the degree as the 
ordered pair $(n,m)$, where $n$ and $m$ are the minimal nonnegative integers 
such that $p(z,w)\in\Pinm$.

We can now state our main results.
\begin{Theorem}\label{th2.3}
For a positive moment functional $\cL$ defined on the space $\Pinm$ 
the following conditions are equivalent:
\begin{itemize}
\item[{\rm{(i)}}] There exists a polynomial $p(z,w)$ of degree at most 
$(n,m)$, nonzero for $ |z|=1$ and $|w|\leq1$, such that 
\begin{equation}\label{2.13}
\cL(z^kw^l)=\frac{1}{4\pi^2}\int\limits_{[-\pi,\pi]^2}\frac{e^{ik\theta}e^{il\varphi}}{|p(e^{i\theta},e^{i\varphi})|^2}\,d\theta\,d\varphi.
\end{equation}
\item[{\rm{(ii)}}] The coefficients $\cK_{n,m}$, $\cK^1_{n,m}$, $\Gt_{n,m}$, 
$\Gt^1_{n,m}$ satisfy
\begin{equation}\label{2.14}
\cK_{n,m}[\Gt^1_{n,m}\Gt^{\dagger}_{n,m}]^{j}(\cK^1_{n,m})^T=0, \text{ for }j=0,1,
\dots,n-1.
\end{equation}
\end{itemize} 
Moreover, if the conditions above hold, we have
\begin{equation}\label{2.15}
\begin{split}
|p(z,w)|^2=&\Phi_{n,m}(z,w)^{T}\overline{\Phi_{n,m}(z,w)}-\Phi_{n,m-1}(z,w)^{T}\overline{\Phi_{n,m-1}(z,w)}\\
=&\Phit_{n,m}(z,w)^{T}\overline{\Phit_{n,m}(z,w)}-\Phit_{n-1,m}(z,w)^{T}\overline{\Phit_{n-1,m}(z,w)}, \\
&\qquad\qquad \qquad\qquad \text{ for }(z,w)\in\Tset^2.
\end{split}
\end{equation}
\end{Theorem}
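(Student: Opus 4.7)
The plan is to prove the two implications separately and then read off \eqref{2.15} from the construction used in the reverse direction.

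For (i) $\Rightarrow$ (ii), I would first view $1/|p(z,w)|^{2}$ as the density of a matrix-valued measure on $\Tset$ in the variable $z$, with the matrix entries storing the $w$-Fourier coefficients. The block Toeplitz matrix $C_{n,m}$ is then the block moment matrix of this matrix measure, and the Gohberg-Semencul formula writes $C_{n,m}^{-1}$ in terms of its first block column, which can be identified with matrix-valued orthogonal polynomials in $z$. Since $p$ has degree at most $n$ in $z$ and is zero-free on $|z|=1$, the associated matrix Szeg\H{o} recurrence stabilizes after step $n$, forcing $\Eh_{k,m}=0$ for all $k>n$. I would then translate this termination into the scalar mixed lex/revlex recurrences \eqref{2.11} together with their tilde analogs: the operator $\Gt^{1}_{n,m}\Gt^{\dagger}_{n,m}$ appears naturally as the $w$-shift modulo $\Span\{\Phit_{n-1,m}\}$, so iterating it $j$ times and sandwiching with $\cK_{n,m}$ and $(\cK^{1}_{n,m})^{T}$ reformulates the vanishing of $\Eh_{k,m}$ exactly as \eqref{2.14}.

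For (ii) $\Rightarrow$ (i), the construction of $p(z,w)$ is the harder half. In the stable sub-case $\cK_{n,m}=0$ treated in \cite{GW1}, $p$ is the first column of $C_{n,m}^{-1}$ up to normalization; in the anti-stable sub-case $\cK^{1}_{n,m}=0$, it is the dual column. In the general case I would use \eqref{2.14} to split the space spanned by the entries of $\Phit_{n,m}$ into two orthogonal subspaces, a stable component $V^{s}$ and an anti-stable component $V^{a}$, and verify inductively, using the recurrences of Theorem~\ref{recurrencefor}, that this decomposition is invariant under the filtration generated by $\Gt^{1}_{n,m}\Gt^{\dagger}_{n,m}$. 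The family \eqref{2.14} indexed by $j=0,\dots,n-1$ is precisely what guarantees that the invariance propagates up to bi-degree $(n,m)$. Once the splitting is in place, I would construct a unitary rotation $U$ mixing $V^{s}$ and $V^{a}$ and define $p(z,w)$ as the image of a specific entry of $\Phi_{n,m}$ under $U$; because $p$ is no longer just the first column of $C_{n,m}^{-1}$ but a linear combination within the first block column, its bi-degree is controlled by the choice of $U$.

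Zero-freeness of $p$ on $|z|=1$ and $|w|\le 1$ would then follow by applying the stable factorization result of \cite{GW1} to the $V^{s}$ component and its anti-stable analog to $V^{a}$, and checking that mixing by the unitary $U$ does not introduce roots on this set. Formula \eqref{2.15} would come out as a Christoffel-Darboux type telescoping: unitarity of $U$ on $\Tset^{2}$ collapses the sum of squared magnitudes over the two components into the difference $\Phi_{n,m}^{T}\overline{\Phi_{n,m}}-\Phi_{n,m-1}^{T}\overline{\Phi_{n,m-1}}$, and the equality with the tilde version follows from the symmetry between lex and revlex orderings together with the identity $\cKt_{n,m}=\cK_{n,m}^{\dagger}$. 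The main obstacle will be the orthogonal splitting itself: isolating the correct subspaces $V^{s}$, $V^{a}$ and the rotation $U$ from \eqref{2.14} alone, and then showing that the resulting $p$ has bi-degree at most $(n,m)$ rather than some larger one while still reproducing every moment of $\cL$ on all of $\Pinm$. The stable and anti-stable endpoints are comparatively transparent; the interpolation between them, controlled by the full family of powers $[\Gt^{1}_{n,m}\Gt^{\dagger}_{n,m}]^{j}$, is where the bookkeeping becomes delicate.
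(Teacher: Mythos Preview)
Your outline tracks the paper's overall architecture, but there are two concrete gaps, one in each direction.

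\medskip

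\textbf{(i) $\Rightarrow$ (ii).} You claim the matrix Szeg\H{o} recurrence gives $\Eh_{k,m}=0$ for $k>n$, and that this is what translates into \eqref{2.14}. The paper actually needs $\Eh_{k,l}=0$ for $k\ge n+1$ and \emph{two} consecutive values $l=m,m-1$ (\leref{le4.2}). The translation step (\leref{le4.3}) rests on the identities \eqref{3.5} via \leref{le3.1}, and the input hypothesis of \leref{le3.1} is precisely the pair $\Eh_{k+1,l}=0$ and $\Eh_{k+1,l-1}=0$; without the second one, \eqref{3.5a} does not give $\cK_{k,l}(\cK^{1}_{k,l})^{T}=0$ and \eqref{3.4} does not collapse to \eqref{3.7b}. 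So your plan as written does not close: you must also show vanishing at level $m-1$, which requires invoking Gohberg--Semencul one step lower (using $\phiz{m}$ to compute $(\Cz{m-1})^{-1}$).

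\medskip

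\textbf{(ii) $\Rightarrow$ (i).} Your proposed zero-freeness argument is the serious problem. You want to split the space into a ``stable component $V^{s}$'' and an ``anti-stable component $V^{a}$'', apply \cite{GW1} to each, and then argue that the unitary mixing does not create new zeros. But under condition \eqref{2.14} alone there is in general \emph{no} stable/anti-stable factorization of $p$: that is exactly the extra content of \thref{th2.5}, which requires the additional tilde condition \eqref{2.17b}. The first example in \seref{se6} is built precisely to witness this: \eqref{2.14} holds while $\cK_{2,2}^{\dagger}\G^{1}_{2,2}\G_{2,2}^{\dagger}\cK^{1}_{2,2}\neq 0$, so $p$ does not split, and there is no stable piece to which \cite{GW1} could be applied. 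The paper's decomposition $\Psit^{(1)},\Psit^{(2)}$ in \leref{le4.4} is not a stable/anti-stable splitting; it is a purely linear-algebraic block structure (Lemmas~\ref{le4.6} and~\ref{le4.8}) arranged so that $z\Psit^{(1)}$ and $\Psit^{(2)}$ both lie in $\Span\Phit_{n,m}$. Zero-freeness of $p$ for $|z|=1$, $|w|\le1$ is then proved directly from the Christoffel--Darboux identity \eqref{3.2}, which collapses on $z\bar z_{1}=1$ to \eqref{4.13}, together with the nonvanishing of $\Phi_{n,m}(z_{0},w_{0})$ for $|z_{0}|=1$ coming from $\det\Phi^{m}_{n}(z)\neq 0$ on $|z|\ge 1$. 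No appeal to \cite{GW1} is made or possible here. You also leave the moment-matching (that the constructed $p$ reproduces $\cL$ on all of $\Pinm$) as an afterthought; in the paper this is a separate Gohberg--Semencul computation \eqref{4.19}--\eqref{4.22} reconstructing $\Cz{m}$ from $p$, and it does not follow formally from the splitting or from \eqref{2.15}.
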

The polynomial $p(z,w)$ in \thref{th2.3} can be computed from equation \eqref{4.11} in \seref{se4}, which depends on the matrices $\Ut$ and $\Vt$ constructed from $\cK_{n,m}$, $\cK^{1}_{n,m}$, $\Gt_{n,m}$ and $\Gt^{1}_{n,m}$ in \leref{le4.6} and \leref{le4.8}, see \reref{re4.5} for more details.

As an immediate corollary of the above theorem and the maximum entropy principle \cite{BN} we obtain the first 
Fej\'er-Riesz factorization.
\begin{Theorem}[Fej\'er-Riesz I] \label{th2.4}
Suppose that $Q(\theta,\varphi)$ is a strictly 
positive trigonometric polynomial of degree $(n,m)$. Then 
$Q(\theta,\varphi)=|p(e^{i\theta},e^{i\varphi})|^2$ where $p(z,w)$ is a polynomial of degree $(n,m)$ such that $p(z,w)\neq 0$ for $ |z|=1$, $|w|\leq 1$ if and 
only if the coefficients $\cK_{n,m}$, $\cK^1_{n,m}$, $\Gt_{n,m}$, 
$\Gt^1_{n,m}$ associated with the measure 
$\frac{d\theta\, d\varphi}{4\pi^2 Q(\theta,\varphi)}$ on $[-\pi,\pi]^2$ satisfy equation \eqref{2.14}.
\end{Theorem}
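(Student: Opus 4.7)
The plan is to derive this theorem as a direct consequence of \thref{th2.3} combined with the maximum entropy principle. Since $Q(\theta,\varphi)$ is a strictly positive trigonometric polynomial, its reciprocal $1/Q$ is a smooth positive weight on $\Tset^2$, and one obtains a positive moment functional $\cL$ on $\Pinm$ by setting
$$\cL(z^k w^l)=\frac{1}{4\pi^2}\int_{[-\pi,\pi]^2}\frac{e^{ik\theta}e^{il\varphi}}{Q(\theta,\varphi)}\,d\theta\,d\varphi, \qquad |k|\le n,\ |l|\le m.$$
The recurrence coefficients $\cK_{n,m}$, $\cK^1_{n,m}$, $\Gt_{n,m}$, $\Gt^1_{n,m}$ in the statement are, by hypothesis, precisely the coefficients built from this $\cL$, so \thref{th2.3} applies verbatim.

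The forward direction is essentially automatic: if $Q=|p|^2$ with $p$ of degree $(n,m)$ nonzero for $|z|=1$ and $|w|\le 1$, then $\cL$ satisfies condition (i) of \thref{th2.3}, and the equivalence delivers (ii), which is exactly \eqref{2.14}. For the converse, assume \eqref{2.14}. Condition (ii) of \thref{th2.3} holds for $\cL$, hence so does (i): there is a polynomial $p(z,w)$ of degree at most $(n,m)$, nonzero for $|z|=1$ and $|w|\le 1$, with
$$\frac{1}{4\pi^2}\int\frac{e^{ik\theta}e^{il\varphi}}{|p(e^{i\theta},e^{i\varphi})|^2}\,d\theta\,d\varphi=\cL(z^k w^l)=\frac{1}{4\pi^2}\int\frac{e^{ik\theta}e^{il\varphi}}{Q(\theta,\varphi)}\,d\theta\,d\varphi$$
for all $(k,l)\in\Pinm$. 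Thus the two strictly positive weights $1/Q$ and $1/|p|^2$ on $\Tset^2$ share identical moments up to degree $(n,m)$, and both are reciprocals of positive trigonometric polynomials of degree at most $(n,m)$.

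At this point the maximum entropy principle from \cite{BN} is invoked: among all positive weights on $\Tset^2$ with prescribed moments on $\Pinm$ and finite Szeg\H{o}-type entropy, whenever an extremal weight of the form $1/R$ with $R$ a positive trigonometric polynomial of degree at most $(n,m)$ exists, it is unique. Since $1/Q$ and $1/|p|^2$ are both of this form and share the prescribed moments, uniqueness forces $|p(e^{i\theta},e^{i\varphi})|^2=Q(\theta,\varphi)$. Finally, because $Q$ has degree exactly $(n,m)$, so does $p$. The only delicate point in this plan is the appeal to maximum entropy: unlike the one-variable case, in two variables the existence and uniqueness of a Bernstein-Szeg\H{o}-type extremizer of a fixed bidegree is not automatic, so I would invoke \cite{BN} as a black box to handle precisely this identification step.
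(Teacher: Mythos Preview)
Your proposal is correct and matches the paper's own argument: the paper states \thref{th2.4} as ``an immediate corollary of the above theorem and the maximum entropy principle \cite{BN}'' without further detail, and your write-up spells out precisely that deduction (apply \thref{th2.3} to the moment functional of $1/Q$, then invoke the uniqueness from \cite{BN} to identify $|p|^2$ with $Q$). There is nothing to add.
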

Analogous results hold with the roles of $z$ and $w$ and $n$ and $m$ interchanged if the coefficients in the reverse lexicographical ordering satisfy the tilde analogs of equation~\eqref{2.14} (see equation~\eqref{2.17b} below). In the case when both sets of conditions hold we find:
\begin{Theorem}\label{th2.5}
For a positive moment functional $\cL$ defined on the space $\Pinm$ the following conditions are equivalent:
\begin{itemize}
\item[{\rm{(i)}}] There exist stable polynomials $p(z,w)$ and $q(z,w)$ of degrees $(n_1,m_1)$ and $(n_2,m_2)$ with $n_1+n_2\leq n$, $m_1+m_2\leq m$ such that 
\begin{equation}\label{2.16}
\cL(z^kw^l)
=\frac{1}{4\pi^2}\int\limits_{[-\pi,\pi]^2}\frac{e^{ik\theta}e^{il\varphi}}{|p(e^{i\theta},e^{i\varphi})q(e^{-i\theta},e^{i\varphi})|^2}\,d\theta\,d\varphi.
\end{equation}
\item[{\rm{(ii)}}] The coefficients $\cK_{n,m}$,  $\cK^1_{n,m}$, $\G_{n,m}$, $\G^1_{n,m}$, $\Gt_{n,m}$,  $\Gt^{1}_{n,m}$ satisfy
\begin{subequations}\label{2.17}
\begin{align}
&\cK_{n,m}[\Gt^1_{n,m}\Gt^{\dag}_{n,m}]^{j}(\cK^1_{n,m})^T=0, \text{ for }j=0,1,\dots,n-1, \label{2.17a} \\
&\cK_{n,m}^{\dag}[\G^1_{n,m}\G^{\dag}_{n,m}]^{l} \cK^1_{n,m}=0, \text{ for }l=0,1,\dots,m-1. \label{2.17b}
\end{align}
\end{subequations}
\end{itemize} 
\end{Theorem}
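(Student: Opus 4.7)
For the forward direction (i) $\Rightarrow$ (ii), the plan is to reduce to two applications of \thref{th2.3}. Given the factorization (2.16), I construct the auxiliary polynomials
\[
r(z,w) := p(z,w)\, z^{n_2} q(1/z,w), \qquad \tilde r(z,w) := p(z,w)\, w^{m_2} \qb(z, 1/w),
\]
each of degree $(n_1+n_2, m_1+m_2)$, hence of degree at most $(n,m)$ by the hypothesis $n_1+n_2\leq n$, $m_1+m_2\leq m$. A direct computation on $\Tset^2$, using the identity $\qb(z,1/w) = \overline{q(1/z,w)}$ there, gives $|r|^2 = |\tilde r|^2 = |p(e^{i\theta},e^{i\varphi}) q(e^{-i\theta},e^{i\varphi})|^2$, so both polynomials represent the density of the weight in (2.16). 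Stability of $p$ and $q$ forces the zeros of $r$ to lie in $\{|z|\neq 1\}\cup\{|w|>1\}$ and the zeros of $\tilde r$ to lie in $\{|z|>1\}\cup\{|w|\neq 1\}$, whence $r\neq 0$ on $\{|z|=1,|w|\leq 1\}$ and $\tilde r\neq 0$ on $\{|z|\leq 1,|w|=1\}$. \thref{th2.3} applied to $r$ then yields (2.17a), while its tilde analog applied to $\tilde r$ yields (2.17b).

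For the reverse direction (ii) $\Rightarrow$ (i), applying \thref{th2.3} to (2.17a) and its tilde analog to (2.17b) produces polynomials $r$ and $\tilde r$ of degree at most $(n,m)$ that both represent the weight and satisfy the corresponding half-stability: $r$ is nonzero on $\{|z|=1,|w|\leq 1\}$ and $\tilde r$ on $\{|z|\leq 1,|w|=1\}$. Equating the two weight expressions gives $|r|^2=|\tilde r|^2$ on $\Tset^2$, which lifts to a polynomial identity $r\, r^*=\tilde r\,\tilde r^*$ in $\Cset[z,w]$, with $r^*(z,w):=z^n w^m \bar{r}(1/z,1/w)$ the total reverse. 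Using unique factorization in $\Cset[z,w]$, I split $r=p\cdot u$, collecting into $p$ the irreducible factors of $r$ that also divide $\tilde r$ and into $u$ those that divide $\tilde r^*$. I then plan to show, from the half-stability of both $r$ and $\tilde r$, that $p$ is stable and that $u$ has all zeros in $\{|z|<1\}\cup\{|w|>1\}$; the latter rewrites as $u(z,w)=c\,z^{n_2}q(1/z,w)$ for a stable polynomial $q$ of degree $(n_2,m_2)$, so that (2.16) holds with the degree bounds $n_1+n_2\leq n$ and $m_1+m_2\leq m$ inherited from $\deg r$.

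The main obstacle is the zero-geometry step in the reverse direction. An irreducible factor of $r$ that also divides $\tilde r$ has its zeros contained in $\{|z|>1\}\cup\{|w|>1\}\cup\{|z|<1,|w|<1\}$, which exceeds the stable region by the interior quadrant $\{|z|<1,|w|<1\}$. Ruling out this extra quadrant requires the full identity $r\, r^*=\tilde r\,\tilde r^*$: a hypothetical factor of $r$ with zeros in $\{|z|<1,|w|<1\}$ would pair, via the identity, with a factor of $r^*$ whose zeros lie in $\{|z|>1,|w|>1\}$, and tracking this pair through $\tilde r$ and $\tilde r^*$ forces unwanted zeros on one of the slices where $r$ or $\tilde r$ is assumed nonvanishing. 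Analogous bookkeeping handles factors of $r$ dividing $\tilde r^*$, pinning the zero region for $u$ — this is precisely where both half-stability hypotheses are used simultaneously, neither alone yielding the stable-antistable decomposition.
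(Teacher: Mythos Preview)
Your overall plan matches the paper's proof: both directions reduce to \thref{th2.3} and its tilde analog via the auxiliary polynomials $r(z,w)=p(z,w)z^{n_2}q(1/z,w)$ and $\tilde r(z,w)=p(z,w)w^{m_2}\qb(z,1/w)$, and the reverse direction compares the two factorizations via unique factorization to split $r$ into stable and anti-stable pieces.

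However, you have manufactured an obstacle that does not exist. If an irreducible factor $f$ of $r$ also divides $\tilde r$, then $f$ inherits both half-stability conditions: $f\neq 0$ on $\{|z|=1,\ |w|\leq 1\}$ from $r$, and $f\neq 0$ on $\{|z|\leq 1,\ |w|=1\}$ from $\tilde r$. The stability criterion recorded in \ssref{ss3.3} states that these two conditions together are \emph{equivalent} to $f\neq 0$ on the closed bidisk $\{|z|\leq 1,\ |w|\leq 1\}$; the interior quadrant $\{|z|<1,\ |w|<1\}$ is excluded automatically. This is exactly how the paper handles case (I). Case (II), where $f$ is associate to the reverse of a factor of $\tilde r$, is symmetric: one checks the two half-stability conditions for $z^{s}f(1/z,w)$ and invokes the same criterion. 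Your proposed workaround---pairing a hypothetical bad factor with its image in $r^*$ and ``tracking through $\tilde r$ and $\tilde r^*$''---is both unnecessary and, as written, not a proof: a factor of $r$ is \emph{allowed} a priori to vanish in $\{|z|<1,\ |w|<1\}$, and the bare identity $r\,r^*=\tilde r\,\tilde r^*$ produces no contradiction from such a zero without ultimately appealing to the stability criterion anyway. Drop the last paragraph and invoke \ssref{ss3.3} directly; the argument then coincides with the paper's (which works in $\Cset[z,z^{-1},w,w^{-1}]$ rather than with the $*$-involution in $\Cset[z,w]$, a cosmetic difference).
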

As in \thref{th2.3}, given the coefficients in the recurrence formulas, the polynomial $p(z,w)z^{n_2}q(1/z,w)$ can be computed by \eqref{4.11} (see also \reref{re4.5}). 
In view of equation \eqref{2.16}, we say in the rest of the paper that a functional satisfying the equivalent conditions in the above theorem belongs to the {\em splitting case}. \thref{th2.5} can also be recast as a Fej\'er-Riesz factorization.

\begin{Theorem}[Fej\'er-Riesz II] \label{th2.6}
Suppose that $Q(\theta,\varphi)$ is a strictly positive trigonometric polynomial of degree $(n,m)$. Then $Q(\theta,\varphi)=|p(e^{i\theta},e^{i\varphi})q(e^{-i\theta},e^{i\varphi})|^2$ where $p(z,w)$ and $q(z,w)$ are stable polynomials of degrees $(n_1,m_1)$ and $(n_2,m_2)$ respectively, with $n_1+n_2= n$,  $m_1+m_2= m$ if and only if the coefficients $\cK_{n,m}$,  $\cK^1_{n,m}$, $\G_{n,m}$, $\G^1_{n,m}$, $\Gt_{n,m}$,  $\Gt^{1}_{n,m}$ associated with the measure $\frac{d\theta\, d\varphi}{4\pi^2\,Q(\theta,\varphi)}$ on $[-\pi,\pi]^2$ satisfy equations \eqref{2.17}.
\end{Theorem}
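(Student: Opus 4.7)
The plan is to deduce \thref{th2.6} from \thref{th2.5} by the maximum entropy principle, in exact analogy with the passage from \thref{th2.3} to \thref{th2.4}. Since $Q$ is strictly positive on the compact square $[-\pi,\pi]^2$, the density $\tfrac{1}{4\pi^2 Q(\theta,\varphi)}$ is smooth and integrable, so its trigonometric moments up to bidegree $(n,m)$ define a positive moment functional $\cL$ on $\Pinm$, to which \thref{th2.5} applies. In the forward direction, if $Q(\theta,\varphi)=|p(e^{i\theta},e^{i\varphi})q(e^{-i\theta},e^{i\varphi})|^2$ with $p,q$ stable of degrees $(n_1,m_1)$, $(n_2,m_2)$ satisfying $n_1+n_2=n$ and $m_1+m_2=m$, then $\cL$ has precisely the integral representation (i) of \thref{th2.5}, and the implication (i)$\Rightarrow$(ii) of that theorem yields \eqref{2.17}.

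For the converse direction, I would assume \eqref{2.17} holds for the coefficients associated with $\cL$. The implication (ii)$\Rightarrow$(i) of \thref{th2.5} produces stable polynomials $p$, $q$ of some degrees $(n_1,m_1)$, $(n_2,m_2)$ with $n_1+n_2\leq n$ and $m_1+m_2\leq m$ for which \eqref{2.16} holds. Consequently, the two strictly positive trigonometric densities
\[
\frac{1}{4\pi^2 Q(\theta,\varphi)}\qquad\text{and}\qquad\frac{1}{4\pi^2|p(e^{i\theta},e^{i\varphi})q(e^{-i\theta},e^{i\varphi})|^2}
\]
share the same Fourier coefficients $c_{k,l}$ for $|k|\leq n$ and $|l|\leq m$, and each is of the form $\tfrac{1}{4\pi^2 R}$ for a strictly positive trigonometric polynomial $R$ of degree at most $(n,m)$ (strict positivity of $|pq|^2$ follows from stability of $p$ and $q$). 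The uniqueness statement in the maximum entropy principle of \cite{BN}, already invoked in the proof of \thref{th2.4}, forces these two densities to coincide, i.e.\ $Q(\theta,\varphi)=|p(e^{i\theta},e^{i\varphi})q(e^{-i\theta},e^{i\varphi})|^2$ pointwise on $[-\pi,\pi]^2$.

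Finally, the bounds $n_1+n_2\leq n$ and $m_1+m_2\leq m$ must be promoted to equalities. Stability of $p$ and $q$ forces their extreme coefficients to be nonzero, so a direct inspection of the Fourier support shows that the trigonometric polynomial $|p(e^{i\theta},e^{i\varphi})q(e^{-i\theta},e^{i\varphi})|^2$ has bidegree exactly $(n_1+n_2,m_1+m_2)$; matching this against the hypothesis that $Q$ has bidegree exactly $(n,m)$ yields $n_1+n_2=n$ and $m_1+m_2=m$. The real obstacle in this proof is hidden upstream in \thref{th2.5}, whose converse direction requires the delicate decomposition of the space of polynomials in reverse lexicographical ordering and the construction of the rotation alluded to in the introduction; once \thref{th2.5} is available, \thref{th2.6} reduces to the max entropy uniqueness together with an easy degree count.
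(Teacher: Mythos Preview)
Your proposal is correct and matches the paper's intended approach: just as \thref{th2.4} is presented as an immediate corollary of \thref{th2.3} via the maximum entropy principle of \cite{BN}, the paper treats \thref{th2.6} as the same corollary of \thref{th2.5}, without a separate proof. Your degree-equality step is likewise in the spirit of observations (i) and (ii) appearing in the proof of \thref{th2.10}; the only wording to tighten is that stability gives nonvanishing \emph{constant} terms (e.g.\ $p(0,w)\not\equiv0$), while nonvanishing top-degree coefficients come from the definition of degree, and together these force $P(z,w)=p(z,w)z^{n_2}q(1/z,w)$ to have degree exactly $(n_1+n_2,m_1+m_2)$ and not be divisible by $z$ or $w$.
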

In this case when the equivalent conditions in \thref{th2.5} hold we have the following structural theorem.
\begin{Theorem}\label{th2.7}
Suppose that \eqref{2.16} holds, where $p(z,w)$ and $q(z,w)$ are stable 
polynomials of degrees $(n_1,m_1)$ and $(n_2,m_2)$, respectively. 
Let $\Phip_{k,l}(z,w)$ and $\Phiq_{k,l}(z,w)$ be the the (vector) polynomials orthogonal with respect to the measures 
$\frac{d\theta\,d\varphi}{4\pi^2|p(e^{i\theta},e^{i\varphi})|^2}$ and $\frac{d\theta\,d\varphi}{4\pi^2|q(e^{i\theta},e^{i\varphi})|^2}$, respectively. Then
\begin{equation}\label{2.18}
\Phip_{n_1,m_1}(z,w)=\left[\begin{matrix} \pr(z,w)\\[-6pt] \\ \Phip_{n_1,m_1-1}(z,w) \end{matrix}\right], \quad 
\Phiq_{n_2,m_2}(z,w)=\left[\begin{matrix} \qr(z,w)\\[-6pt]\\ \Phiq_{n_2,m_2-1}(z,w) \end{matrix}\right].
\end{equation}
Moreover, if we set $n=n_1+n_2$ and $m=m_1+m_2$, then there exist unitary 
matrices $U\in M^{m,m}$, $V\in M^{m+1,m+1}$ such that 
\begin{subequations}\label{2.19}
\begin{equation}\label{2.19a}
U^{\dagger}\Phi_{n,m-1}(z,w)=\left[\begin{matrix} z^{n_2}q(1/z,w) \Phip_{n_1,m_1-1}(z,w) \\[-6pt]\\
\pr(z,w) w^{m_2-1}\overline{\Phiq_{n_2,m_2-1}}(z,1/w)\end{matrix}\right],
\end{equation}
and
\begin{equation}\label{2.19b}
V^{\dagger}\Phi_{n,m}(z,w)=\left[\begin{matrix} \pr(z,w)z^{n_2}q(1/z,w)\\[-6pt] \\ z^{n_2}q(1/z,w) \Phip_{n_1,m_1-1}(z,w) \\[-6pt]\\
\pr(z,w) w^{m_2}\overline{\Phiq_{n_2,m_2-1}}(z,1/w)\end{matrix}\right].
\end{equation}
\end{subequations}
\end{Theorem}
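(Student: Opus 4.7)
\bigskip
\noindent\emph{Proof proposal for \thref{th2.7}.}

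The plan is to establish \eqref{2.18} first and then construct the unitaries $V$ and $U$ of \eqref{2.19b} and \eqref{2.19a}.

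For \eqref{2.18}, I would observe that both weights $\frac{d\theta\,d\varphi}{4\pi^2|p|^2}$ and $\frac{d\theta\,d\varphi}{4\pi^2|q|^2}$ fall under the stable case (case (i) discussed before \thref{th2.5}), so that $\cK^p_{n_1,m_1}=0$ and $\cK^q_{n_2,m_2}=0$. Substituting $\cK^p_{n_1,m_1}=0$ into the recurrence \eqref{2.11c} for $\Phip$ shows that $\Phip_{n_1,m_1-1}$ appears as the bottom $m_1$ components of $\Phip_{n_1,m_1}$ (modulo the action of $\G^p_{n_1,m_1}$). A short argument using the Christoffel--Darboux identity \eqref{2.15} together with the normalization of $p$ (chosen with positive leading coefficient) then identifies the topmost component $\phi^{p,m_1}_{n_1,m_1}$ with $\pr(z,w)$, and likewise for $\qr$.

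For the unitary $V$ in \eqref{2.19b}, let $\Xi(z,w)$ denote the right-hand side. I would reduce the claim to three assertions: (a) the $m+1$ entries of $\Xi$ lie in $\Pinm\cap\Cset[z,w]$; (b) each entry is $\cL$-orthogonal to $\Pi^{n-1,m}\cap\Cset[z,w]$; and (c) the Gram matrix of $\Xi$ in $L^2(d\mu)$ equals $I_{m+1}$. Once these are verified, $\Xi$ and $\Phi_{n,m}$ are two orthonormal bases of the $(m+1)$-dimensional orthogonal complement $\cM$ of $\Pi^{n-1,m}\cap\Cset[z,w]$ inside $\Pinm\cap\Cset[z,w]$, and the unitary $V$ with $V^\dagger\Phi_{n,m}=\Xi$ exists and is unique. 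Assertion (a) is a direct degree count; each component has $z$-degree $n_1+n_2=n$ and $w$-degree $\le m_1+m_2=m$, noting that $w^{m_2}\overline{\phi^{q,s}}(z,1/w)$ is genuinely a polynomial in $w$. For (c), the key observation is that multiplication by $\tilde q(z,w):=z^{n_2}q(1/z,w)$ is an $L^2$-isometry from $d\mu_p$ to $d\mu$, since $d\mu=d\mu_p/|\tilde q|^2$ on $\Tset^2$, and symmetrically $g(z,w)\mapsto\pr(z,w)w^{m_2}\overline{g}(z,1/w)$ is an isometry from $d\mu_q$ to $d\mu$. These observations reduce the within-block inner products to the orthonormality of $\Phip_{n_1,m_1}$ and $\Phiq_{n_2,m_2}$.

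The assertion (b) and the cross-block inner products in (c) are the main obstacle. For (b) I would write each pairing $\langle\xi,z^aw^b\rangle_{\cL}$ as a double integral on $\Tset^2$; after substituting $\pr=z^{n_1}w^{m_1}\overline{p}$ (valid on $\Tset^2$) and the analogous identity for the $\bar q$ term, the $z$-integrand is a rational function whose denominator $p(z,w)\bar q(z,1/w)$ is analytic and nonzero in $|z|\le 1$ by the stability of $p$ and $q$, so that for $a\le n-1$ the numerator $z^{n-a-1}$ makes the integrand analytic in the closed unit disk and Cauchy's theorem gives vanishing. For the middle and bottom entries an additional expansion of $1/\bar q(z,1/w)$ (respectively $1/\bar p(1/z,1/w)$) as a convergent power series in $z$ near $z=0$, combined with the orthogonality conditions \eqref{2.4} satisfied by $\phi^{p,s}_{n_1,m_1-1}$ and $\phi^{q,s'}_{n_2,m_2-1}$, handles the remaining terms. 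The cross-block entries of the Gram matrix constitute the most delicate computation: for the middle--bottom pairing the integral reduces, after simplification on $\Tset^2$, to
\[
\int_{\Tset^2}\frac{z^{n_2-n_1}\,\phi^{p,s}(z,w)\,\phi^{q,s'}(1/z,w)}{\bar p(1/z,1/w)\,\bar q(z,1/w)}\,\frac{dz\,dw}{(2\pi)^2},
\]
which vanishes because, writing $\bar p(1/z,1/w)=w^{-m_1}P(z,w)$ and $\bar q(z,1/w)=w^{-m_2}Q(z,w)$ with $P,Q$ polynomial in $w$ of degrees $m_1,m_2$ having all $w$-zeros inside $|w|<1$ (by the stability analyses), the $w$-integrand for fixed $z\in\Tset$ is meromorphic with all poles in $|w|<1$ and decays at least like $w^{-2}$ at $\infty$; deformation of $|w|=1$ to $|w|=R\to\infty$ yields zero.

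The construction of $U\in M^{m,m}$ in \eqref{2.19a} follows the same template with $m$ replaced by $m-1$ throughout; the candidate $\Xi'$ has $m_1+m_2=m$ entries which form an orthonormal basis of the $m$-dimensional orthogonal complement of $\Pi^{n-1,m-1}\cap\Cset[z,w]$ inside $\Pi^{n,m-1}\cap\Cset[z,w]$, and matching against $\Phi_{n,m-1}$ produces $U$.
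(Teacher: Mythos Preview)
Your overall strategy coincides with the paper's: show that the right-hand sides of \eqref{2.19a}, \eqref{2.19b} are orthonormal bases of the relevant orthogonal complements, whence the unitary change-of-basis matrices $U,V$ exist. The treatment of the top entry in (b), the within-block orthonormality via the isometries $f\mapsto \tilde q f$ and $g\mapsto \pr\, w^{m_2}\bar g(z,1/w)$, and the cross-block vanishing are all sound (your displayed cross-block integral is missing a factor $w^{-m}$ and the $1/(zw)$ from $d\theta\,d\varphi$, but once corrected the $|w|\to\infty$ deformation works; it is the outward version of the paper's observation that the same integral, rewritten with $p(z,w)q(1/z,w)$ in the denominator, is analytic on $|w|\le1$).

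There is, however, a real gap in part (b) for the middle and bottom blocks. After cancellation, the pairing of a middle entry with $z^aw^b$ reduces to
\[
\cL_p\!\Bigl(\phi^{p,s}_{n_1,m_1-1}(z,w)\,\frac{z^{\,n_2-a}}{w^{b}\,\bar q(z,1/w)}\Bigr),\qquad 0\le a\le n-1.
\]
Expanding $1/\bar q(z,1/w)=\sum_{k\ge0}c_k(w)z^k$ produces terms $\cL_p(\phi^{p,s}_{n_1,m_1-1}\,z^{n_2-a+k}c_k(w)w^{-b})$, and the basic orthogonality \eqref{2.4} cannot close this: for $a$ near $n-1$ the exponent $n_2-a+k$ drops below $0$, and in any case the coefficients $c_k(w)$ are rational functions of $w$, not polynomials of degree $\le m_1-1$. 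A direct contour argument in $z$ fails as well, since the full integrand carries poles from $\bar p(1/z,1/w)$ inside $|z|<1$ and from $p(z,w)$ outside $|z|>1$. What is actually required is the \emph{extended} orthogonality
\[
\cL_p\bigl(\Phi^{p}_{n_1,m_1-1}(z,w)\,z^{-k}w^{-l}\bigr)=0\qquad\text{for all }k<n_1,\ l\ge0,
\]
which is exactly \eqref{5.8} of \leref{le5.2}. Its proof is not a corollary of \eqref{2.4}: one uses a Christoffel--Darboux identity to write $\Phi^{p}_{n_1,m_1-1}$ as a combination of $p$ and $\pr$ multiplied by factors analytic on $|z|\le1$, and then invokes the strong orthogonality of $p$ and $\pr$ from \leref{le5.1}. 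With \eqref{5.8} available, $z^{n_2-a}/(w^b\bar q(z,1/w))$ lies (for $|z|\le1$, $|w|\ge1$) in the closed span of $\{z^{-k}w^{-l}:k<n_1,\ l\ge0\}$, and the pairing vanishes. The bottom block has the symmetric gap and the symmetric fix via the tilde analogue of \leref{le5.2}.
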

Roughly speaking, the above theorem allows to decompose the space of orthogonal polynomials associated with the functional in \eqref{2.16} as a sum of the two extreme cases:
\begin{itemize}
\item the {\em stable case} when $q(z,w)=1$;
\item the {\em anti-stable} case when $p(z,w)=1$.
\end{itemize}
As a corollary of the the proof we obtain also the following characterizations 
of these situations.
\begin{Corollary}\label{co2.8} 
For a positive moment functional $\cL$ defined on the space $\Pinm$ the following statements hold.
\begin{itemize}
\item[{\rm{(i)}}] There exists a stable polynomial $p(z,w)$ of degree at most 
$(n,m)$ such that 
\begin{equation}\label{2.20}
\cL(z^kw^l)
=\frac{1}{4\pi^2}\int\limits_{[-\pi,\pi]^2}\frac{e^{ik\theta}e^{il\varphi}}{|p(e^{i\theta},e^{i\varphi})|^2}\,d\theta\,d\varphi
\end{equation}
if and only if $\cK_{n,m}=0$. Moreover, we can take $p(z,w)=\phir_{n,m}^{m}(z,w)$.
\item[{\rm{(ii)}}] There exists a stable polynomial $q(z,w)$ of degree at most 
$(n,m)$ such that 
\begin{equation}\label{2.21}
\cL(z^kw^l)
=\frac{1}{4\pi^2}\int\limits_{[-\pi,\pi]^2}\frac{e^{ik\theta}e^{il\varphi}}{|q(e^{-i\theta},e^{i\varphi})|^2}\,d\theta\,d\varphi
\end{equation}
if and only if $\cK^{1}_{n,m}=0$.
\end{itemize}
\end{Corollary}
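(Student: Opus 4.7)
The plan is to deduce \coref{co2.8} from \thref{th2.3} and the explicit construction of the factor $p(z,w)$ obtained in \seref{se4} (formula \eqref{4.11} together with \leref{le4.6}, \leref{le4.8}, and \reref{re4.5}).

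For the backward direction of (i), if $\cK_{n,m}=0$ then the left-hand side of \eqref{2.14} vanishes for every $j$, so \thref{th2.3} yields a polynomial $p(z,w)$ of degree at most $(n,m)$, nonzero on $|z|=1$, $|w|\le 1$, with $\cL$ represented by $1/(4\pi^2|p|^2)$. It remains to upgrade nonvanishing on the distinguished boundary to full stability on the closed bidisk. As noted in the introduction, when $\cK_{n,m}=0$ the subspace decomposition underlying \seref{se4} collapses and the auxiliary unitaries $\Ut$, $\Vt$ of \leref{le4.6}--\leref{le4.8} reduce to the identity, so \eqref{4.11} simplifies to $p(z,w)=\phir^{m}_{n,m}(z,w)$. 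Stability of this polynomial then follows from \eqref{2.15}: combined with the reduced recurrence $\G_{n,m}\Phi_{n,m}=\Phi_{n,m-1}$ coming from \eqref{2.11c} with $\cK_{n,m}=0$, the identity \eqref{2.15} displays $|\phir^{m}_{n,m}|^2$ on $\Tset^2$ as a positive telescoping sum, and a slice-wise maximum modulus argument in $z$ for each $|w|\le 1$ rules out zeros in the bidisk.

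For the forward direction of (i), assume \eqref{2.20} holds with stable $p$ of degree at most $(n,m)$. Then $p$ is in particular nonzero on $|z|=1$, $|w|\le 1$, so \thref{th2.3} applies and gives \eqref{2.14}. By the maximum entropy principle, the stable factor in such a representation is unique up to a unimodular constant, so $p$ must coincide (up to such a constant) with the polynomial produced by \eqref{4.11}. This rigidity forces the auxiliary matrices $\Ut$, $\Vt$ to collapse to the identity, and reading off the first block of \eqref{4.11} again gives $p=\phir^{m}_{n,m}$. The recurrence \eqref{2.11c} then places $\Phi_{n,m-1}$ in the row span of $\Phi_{n,m}$ (since $\phir^{m}_{n,m}$ is the only component of $\Phir_{n,m}$ not already present in $\Phir_{n,m-1}$), and uniqueness of the orthogonal decomposition forces $\cK_{n,m}=\langle\Phi_{n,m-1},\Phit_{n-1,m}\rangle=0$.

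Part (ii) reduces to (i) by the involution $z\mapsto 1/z$ (equivalently $\theta\mapsto -\theta$): this sends $\cL$ to a positive functional $\cL'$ on $\Pinm$ whose lexicographic recurrence coefficient $\cK_{n,m}(\cL')$ coincides with $\cK^{1}_{n,m}(\cL)$, as can be read off by comparing \eqref{2.11c} with \eqref{2.11d}. A stable factorization of the form \eqref{2.20} for $\cL'$ is exactly an anti-stable factorization \eqref{2.21} for $\cL$, so (i) applied to $\cL'$ yields (ii). The main obstacle I expect is the rigorous identification $p=\phir^{m}_{n,m}$ in the collapsed case $\cK_{n,m}=0$ and the accompanying stability argument for this specific polynomial; once these are in hand, the remaining steps are routine applications of the recurrences \eqref{2.11} and of the symmetry $z\leftrightarrow 1/z$.
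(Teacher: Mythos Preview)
Your proposal has real gaps in both directions of (i) and in the reduction for (ii).

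In the backward direction of (i), once you know $p=\phir^{m}_{n,m}$ is nonzero for $|z|=1$, $|w|\le 1$ (from \leref{le4.4} with $\Ut=\Vt=I$), a ``slice-wise maximum modulus argument in $z$'' cannot upgrade this to stability on the closed bidisk: for fixed $|w|\le 1$ the slice is a polynomial in $z$ that is merely nonzero on $|z|=1$, and such a polynomial can perfectly well vanish inside the disk. The paper closes this gap by using $\cK_{n,m}=0$ \emph{together with} $\cKt_{n,m}=\cK_{n,m}^{\dagger}=0$ to obtain the block structures \eqref{5.11} for \emph{both} $\Phi_{n,m}$ and $\Phit_{n,m}$, then applies \leref{le4.4} and its tilde analog to get nonvanishing on $\{|z|=1,\ |w|\le 1\}$ and on $\{|z|\le 1,\ |w|=1\}$ separately, which by the stability criterion in \ssref{ss3.3} yields full stability. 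In the forward direction of (i) your rigidity argument is circular: the polynomial produced by \eqref{4.11} is a priori only nonzero on $|z|=1$, $|w|\le 1$, so uniqueness of the \emph{stable} factor does not force it to agree with the given $p$; and even if it did, nothing in the construction of $\Ut,\Vt$ guarantees they are the identity (many unitaries can produce the same $\psit^{n}_{n,m}$). The paper proceeds directly: \leref{le5.2} shows from the stable representation that $\Phi_{n,m-1}$ is orthogonal to $z^kw^l$ for all $k<n$, $l\ge 0$, and plugging this into the definition \eqref{2.12c} gives $\cK_{n,m}=0$ immediately.

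For (ii), the symmetry $z\mapsto 1/z$ is an appealing idea, but the assertion that $\cK_{n,m}(\cL')$ coincides with $\cK^{1}_{n,m}(\cL)$ cannot simply be ``read off'' from \eqref{2.11c}--\eqref{2.11d}: under this involution the lexicographically orthonormal vectors $\Phi_{n,m-1}$, $\Phit_{n-1,m}$, $\Phitr_{n-1,m}$ do not go to one another but are intertwined by unitary changes of basis that must be tracked. The paper avoids this by arguing asymmetrically: for the forward direction it uses \thref{th2.7} (with $p\equiv 1$) to express $\Phi_{n,m-1}$ and $\Phit_{n-1,m}$ through the polynomials orthogonal with respect to $|q|^{-2}$, so that $\cK^{1}_{n,m}$ becomes a unitary conjugate of the $\cK_{n,m}$ of that measure, which vanishes by part (i); for the backward direction it builds the structures \eqref{5.12} from \eqref{2.11d} and its tilde analog, and then proves the key identification $\psit^{n}_{n,m}=\epsilon\,\psir^{m}_{n,m}$ via their characteristic orthogonality properties, again invoking \leref{le4.4} twice to obtain two one-sided stabilities.
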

As a consequence of the above corollary, we obtain a simple characterization of the functionals which are tensor products of functionals on the circle.
\begin{Corollary}\label{co2.9} 
Let $\cL$ be a positive moment functional on the space $\Pinm$. Then, there exist a positive functional $\cL_z$ defined on 
$\Span\{z^k:|k|\leq n\}$ and a positive functional $\cL_w$ defined on $\Span\{w^l:|l|\leq m\}$ such that 
$\cL(z^kw^l)=\cL_z(z^k)\cL_w(w^l)$ if and only if $\cK_{n,m}=\cK^{1}_{n,m}=0$. In this case, $\phir_{n,m}^{m}(z,w)=\alpha(z)\beta(w)$, where $\alpha(z)$ and $\beta(w)$ are stable polynomials of degrees at most $n$ and $m$, respectively and 
\begin{equation}\label{2.22}
\cL(z^kw^l)
=\frac{1}{4\pi^2}\int\limits_{[-\pi,\pi]^2}\frac{e^{ik\theta}e^{il\varphi}}{|\alpha(e^{i\theta})\beta(e^{i\varphi})|^2}\,d\theta\,d\varphi.
\end{equation}
\end{Corollary}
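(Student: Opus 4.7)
For the forward direction, I would reduce to the one-variable case. Assuming $\cL=\cL_z\otimes\cL_w$, the one-variable Fej\'er--Riesz theorem produces stable polynomials $\alpha(z)$ and $\beta(w)$ of degrees at most $n$ and $m$ such that $\cL_z(z^k)=\tfrac{1}{2\pi}\int e^{ik\theta}|\alpha(e^{i\theta})|^{-2}\,d\theta$ and analogously for $\cL_w$. Setting $p(z,w):=\alpha(z)\beta(w)$ gives a stable polynomial of degree at most $(n,m)$ realizing \eqref{2.20}, so \coref{co2.8}(i) yields $\cK_{n,m}=0$. Setting $q(z,w):=\bar\alpha(z)\beta(w)$---which is stable since $\bar\alpha$ has the conjugate roots of $\alpha$---gives $|q(e^{-i\theta},e^{i\varphi})|^2=|\alpha(e^{i\theta})\beta(e^{i\varphi})|^2$, and \coref{co2.8}(ii) yields $\cK^1_{n,m}=0$.

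For the converse, assume $\cK_{n,m}=\cK^1_{n,m}=0$. \coref{co2.8}(i) furnishes a stable polynomial $p=\phir_{n,m}^{m}$ of degree $\leq(n,m)$ satisfying \eqref{2.20}, and \coref{co2.8}(ii) furnishes a stable polynomial $q$ of degree $\leq(n,m)$ satisfying \eqref{2.21}. Setting $Q(z,w):=z^{n}q(1/z,w)$, we have $|Q(e^{i\theta},e^{i\varphi})|=|q(e^{-i\theta},e^{i\varphi})|$, and $Q$ is nonzero for $|z|=1$ and $|w|\leq 1$; thus $Q$ also satisfies condition (i) of \thref{th2.3} relative to $\cL$. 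Since the right-hand side of formula \eqref{2.15} depends only on the orthogonal polynomials of $\cL$, applying \thref{th2.3} to both $p$ and $Q$ forces $|p(z,w)|^2=|Q(z,w)|^2$ identically on $\Tset^2$.

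The remaining task is to deduce, from $|p|^2=|Q|^2$ on $\Tset^2$ with $p$ stable in both variables and $Q$ anti-stable in $z$ while stable in $w$, that $p(z,w)$ factors as $\alpha(z)\beta(w)$. Two slicewise one-variable Fej\'er--Riesz arguments enter. Fixing $z_0\in\Tset$, the polynomials $p(z_0,\cdot)$ and $Q(z_0,\cdot)$ are both stable in $w$ of degree $\leq m$ with equal modulus on $|w|=1$, so one-variable Fej\'er--Riesz uniqueness gives $Q(z_0,w)=v(z_0)p(z_0,w)$ for a unimodular $v(z_0)$; hence $Q/p=v(z)$ as a rational function depending only on $z$. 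Fixing $w_0\in\Tset$ instead, with $p(\cdot,w_0)$ stable in $z$ and $Q(\cdot,w_0)$ anti-stable in $z$, the analogous argument gives $Q(z,w_0)=u(w_0)z^n\overline{p(1/\bar z,w_0)}$. Combining the two identities forces $p^{\#}(z,w)/p(z,w)$, where $p^{\#}(z,w):=z^n\overline{p(1/\bar z,w)}$, to separate on $\Tset^2$ as $F(z)/G(w)$. Expanding $p(z,w)=\sum_{l=0}^m R_l(z)w^l$ and matching coefficients in this separable identity, using the polynomial structure of $p$, forces all the $R_l(z)$ to be scalar multiples of a single polynomial $\alpha(z)$, giving $p(z,w)=\alpha(z)\beta(w)$ with stable one-variable factors. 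Then \eqref{2.22} holds and $\cL=\cL_z\otimes\cL_w$ with $\cL_z=1/|\alpha|^2$ and $\cL_w=1/|\beta|^2$.

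The principal obstacle is the rigidity step extracting the factorization of $p$ from the separability of $p^{\#}/p$. The intrinsic identity $|p|^2=|Q|^2$ on $\Tset^2$ coming from \thref{th2.3} is the crucial input, since it upgrades the orthogonal-polynomial condition $\cK^1_{n,m}=0$ from mere agreement of reciprocals as moment functionals on $\Pinm$ into usable pointwise information about the shape of $p$.
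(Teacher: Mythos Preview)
Your forward direction is correct but differs from the paper's: the paper simply observes that when $\cL=\cL_z\otimes\cL_w$ the orthonormal polynomials are literally tensor products, $\phi_{n,m}^{s}(z,w)=\alpha_n(z)\beta_s(w)$ and $\phit_{n,m}^{s}(z,w)=\beta_m(w)\alpha_s(z)$, and then reads off $\cK_{n,m}=\cK^{1}_{n,m}=0$ directly from the defining inner products \eqref{2.12c}, \eqref{2.12e}. Your route through the one-variable Bernstein--Szeg\H{o} representation and \coref{co2.8} is also valid and has the virtue of reusing the corollary.

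For the converse, both you and the paper start by invoking \coref{co2.8} and \eqref{2.15} to obtain $|p|^2=|Q|^2$ on $\Tset^2$ with $Q(z,w)=z^{n}q(1/z,w)$. The paper then finishes with the unique-factorization argument from the proof of \thref{th2.5}, combined with the one-line observation that if a polynomial $h$ of degree $(k,l)$ has both $h$ and $z^{k}h(1/z,w)$ stable, then $h$ must be independent of $z$ (since for each $|w|\le 1$ the polynomial $h(\cdot,w)$ would have to be zero-free on all of $\Cset$). Matching irreducible factors of $p$ against those of $Q$ and $\bar Q(1/z,1/w)$ then forces every irreducible factor of $p$ to depend on only one variable, giving $p=\alpha(z)\beta(w)$ immediately.

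Your alternative via slicewise Fej\'er--Riesz uniqueness is a genuinely different route and is in principle workable, but the step you label ``matching coefficients'' is not yet a proof. The identity $p^{\#}/p=F(z)/G(w)$ holds only on $\Tset^2$, and $F,G$ are merely unimodular functions there, not polynomials; writing $p=\sum_l R_l(z)w^l$ and $p^{\#}=z^n\sum_l\overline{R_l(z)}w^{-l}$ does not allow a direct coefficient comparison because $G(w)$ is unknown. One correct completion: from $Q=v(z)p$ on $\Tset\times\Cset$ deduce $S_l(z)=v(z)R_l(z)$ for the $w$-coefficients of $Q$ and $p$, so $S_lR_{l'}=S_{l'}R_l$ identically and $v$ extends to a rational function $V(z)$; then for $z_1,z_2\in\Tset$ your two slicings give that $h(w):=p(z_1,w)/p(z_2,w)$ satisfies $h/\bar h=\text{const}$ on $|w|=1$, and since $h$ is analytic on $|w|\le 1$ with constant argument on the boundary it is constant. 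This yields $R_l(z)=c_l\alpha(z)$ as you claim. So your strategy succeeds, but the paper's UFD argument plus the stability observation is shorter and avoids this analytic detour.
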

Finally, the above results can be used to completely characterize the measures on $\Tset^2$ for which the corresponding coefficients $\Eh_{k,l}$ and $\Eht_{k,l}$ vanish after a particular point.
\begin{Theorem}\label{th2.10}
Let $\mu$ be a positive Borel measure supported on the bi-circle. Then $\mu$ 
is absolutely continuous with respect to Lebesgue measure with 
\begin{equation}\label{2.23}
d\mu=\frac{d\theta\, d\varphi}{4\pi^2|p(e^{i\theta},e^{i\varphi})q(e^{-i\theta},e^{i\varphi})|^2},
\end{equation}
where $p(z,w)$ and $q(z,w)$ are stable polynomials of degrees 
$(n_1,m_1)$ and $(n_2,m_2)$, respectively, with $n_1+n_2\leq n$, 
$m_1+m_2\leq m$ if and only if 
\begin{equation}\label{2.24}
\Eh_{k,l}=0\text{ and }\Eht_{k,l}=0\text{ for all }k\geq n+1,\quad l\geq m+1.
\end{equation}
Moreover, in this case we have
\begin{equation}\label{2.25}
\Eh_{k,l}=0, \text{ for } k\geq n+1,\ l\geq m-1\text{ and }\Eht_{k,l}=0, 
\text{ for } k\geq n-1,\ l\geq m+1.
\end{equation}
\end{Theorem}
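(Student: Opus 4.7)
The plan is to prove the two implications separately, using \thref{th2.6} and the structural decomposition \thref{th2.7} for the forward direction, and a bootstrapping argument along the recurrences \eqref{2.11} for the reverse direction.

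For the forward direction, assume $d\mu=\frac{d\theta\,d\varphi}{4\pi^{2}|p(e^{i\theta},e^{i\varphi})q(e^{-i\theta},e^{i\varphi})|^{2}}$ with $p,q$ stable of degrees $(n_{1},m_{1}),(n_{2},m_{2})$ satisfying $n_{1}+n_{2}\le n$, $m_{1}+m_{2}\le m$. Set $n'=n_{1}+n_{2}$, $m'=m_{1}+m_{2}$. By \thref{th2.6} the splitting identities \eqref{2.17} hold at level $(n',m')$, and \thref{th2.7} provides explicit formulas for $\Phi_{n',m'-1}$ and $\Phi_{n',m'}$ in terms of $\pr,\qr,\Phip_{n_{1},m_{1}-1},\Phiq_{n_{2},m_{2}-1}$. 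For any $k>n'$ and suitable $l$, the polynomial obtained by appending a factor of $z^{k-n'}$ to the $(n',l)$-level orthonormal polynomial already satisfies the orthogonality relations \eqref{2.4} at level $(k,l)$, since the density in \eqref{2.23} only involves moments in $\Pi^{n',m'}$; uniqueness in \eqref{2.4}--\eqref{2.5} then identifies it as the true $\phi_{k,l}^{s}$. Substituting this trivial-lift form into \eqref{2.12a} gives $\Eh_{k,l}=0$ for $k\ge n+1,l\ge m-1$, which is the stronger moreover claim; the tilde analog yields $\Eht_{k,l}=0$ for $k\ge n-1,l\ge m+1$. Together these imply \eqref{2.24} and \eqref{2.25}.

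For the backward direction, assume \eqref{2.24}. The recurrence \eqref{2.11a} with $\Eh_{k,l}=0$ forces $A_{k,l}\Phi_{k,l}=z\Phi_{k-1,l}$ for $k\ge n+1,l\ge m+1$, so in this range $\Phi_{k,l}$ is a pure $z$-shift of $\Phi_{n,l}$. The tilde analog, applied with $\Eht_{k,l}=0$, gives analogous stabilization of $\Phit_{k,l}$ in the $w$-direction. Feeding these stabilizations into the cross-recurrences \eqref{2.11c}--\eqref{2.11f} and exploiting the coefficient identities developed in \seref{se3}, one propagates the joint vanishing in the corner $\{(k,l):k\ge n+1,l\ge m+1\}$ to the full strips of \eqref{2.25}, and then shows that the recurrence coefficients at level $(n,m)$ of the moment functional $\cL(z^{k}w^{l})=\int z^{k}w^{l}\,d\mu$ satisfy \eqref{2.17}. \thref{th2.5} then supplies stable polynomials $p,q$ of degrees $(n_{1},m_{1}),(n_{2},m_{2})$ with $n_1+n_2\le n,m_1+m_2\le m$ such that $\cL$ agrees with integration against the density in \eqref{2.23} on $\Pinm$. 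Finally, the stabilized recurrences uniquely determine all higher moments of $\mu$ from the $\Pinm$ ones, and these match those of the Bernstein-Szeg\H{o} candidate, giving equality of measures on $\Tset^{2}$.

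The main obstacle is the propagation step in the backward direction. The hypothesis controls coefficients only on a ``corner'' of the index plane, and one must deduce vanishing on the much larger asymmetric regions of \eqref{2.25} and, crucially, the splitting identities \eqref{2.17} at level $(n,m)$. This is where the algebraic relations between the lexicographical coefficients ($\Eh,\cK,\G,\cK^{1},\G^{1}$) and their reverse lexicographical counterparts ($\Eht,\cKt,\Gt,\cKt^{1},\Gt^{1}$) from \cite{GB} and \seref{se3} do the decisive work. Once the splitting conditions \eqref{2.17} are verified, the remainder is bookkeeping: \thref{th2.5} gives the factored form on $\Pinm$, and the stabilization from the vanishing $\Eh$ and $\Eht$ transfers this to equality of Borel measures.
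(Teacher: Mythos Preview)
Your backward direction has a genuine gap at exactly the point you flag as the ``main obstacle.'' You propose to propagate the vanishing from the corner $\{k\ge n+1,\,l\ge m+1\}$ to the strips in \eqref{2.25} using only the recurrence identities of \seref{se3}, and then verify \eqref{2.17} at level $(n,m)$. But those identities do not do this. Concretely, applying \eqref{3.5a} with $(k,l)=(n,m+1)$ gives
\[
\Eh_{n+1,m}=\G_{n,m+1}\Eh_{n+1,m+1}(\G^{1}_{n,m+1})^{T}+\cK_{n,m+1}(\cK^{1}_{n,m+1})^{T}=\cK_{n,m+1}(\cK^{1}_{n,m+1})^{T},
\]
so to step down even one unit in $l$ you would need $\cK_{n,m+1}(\cK^{1}_{n,m+1})^{T}=0$, and nothing in \seref{se3} delivers this from \eqref{2.24} alone; the relations in \leref{le3.1} that might help themselves require $\Eh_{n+1,m}=0$ as a hypothesis, which is circular. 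The paper avoids this entirely: from \eqref{2.24} and \leref{le4.3} (and its tilde analog) one obtains the splitting conditions \eqref{2.17} not at level $(n,m)$ but at level $(n+2,m+2)$, where the two-step cushion makes \leref{le3.1} applicable. \thref{th2.5} is then invoked at level $(n+2,m+2)$, giving stable $p,q$ with degree sums bounded only by $n+2$ and $m+2$; the shift relations \eqref{5.17} extend the moment identity to all of $\Cset[z^{\pm1},w^{\pm1}]$, and a separate degree argument on the Laurent polynomial $P\bar P(1/z,1/w)$ forces $n_1+n_2\le n$, $m_1+m_2\le m$. The stronger conclusion \eqref{2.25} then follows from the \emph{forward} direction, not as an intermediate step in the backward one.

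Your forward direction is closer in spirit to the paper but the justification is not right. The assertion that $z^{k-n'}\phi^{s}_{n',l}$ already satisfies the orthogonality \eqref{2.4} at level $(k,l)$ ``since the density in \eqref{2.23} only involves moments in $\Pi^{n',m'}$'' does not establish orthogonality to monomials $z^{j}w^{i}$ with $j$ small (the required inner product unfolds to $\langle \phi^{s}_{n',l},z^{j-(k-n')}w^{i}\rangle$, with a negative $z$-exponent). For $l\ge m'$ this can be salvaged, but to reach $l=m'-1$ (needed for \eqref{2.25}) you need real input: the paper gets it via \leref{le4.2}, where the Gohberg--Semencul formula shows $(C_{l}^{\theta})^{-1}$ is a trigonometric polynomial of degree $\le n'$ for every $l\ge m'-1$, whence $\Eh_{k,l}=0$ from the matrix-valued theory. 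Neither \thref{th2.6} nor \thref{th2.7} are needed for this direction.
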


\section{Preliminary results} \label{se3}
\subsection{Connection between bivariate and matrix orthogonal polynomials}\label{ss3.1}
The vector polynomial $\Phi_{n,m}(z,w)$ defined in \eqref{2.6} can be written as 
\begin{subequations}\label{3.1}
\begin{equation}\label{3.1a}
\Phi_{n,m}(z,w)=\Phi^{m}_{n}(z)\left[\begin{matrix} w^{m}\\ w^{m-1}\\ \vdots \\ 1\end{matrix}\right],
\end{equation}
where $\Phi^{m}_{n}(z)$ is a unique $(m+1)\times(m+1)$ matrix polynomial of degree $n$ in $z$. Similarly, the vector polynomial $\Phit_{n,m}(z,w)$ defined in \eqref{2.8} can be written as 
\begin{equation}\label{3.1b}
\Phit_{n,m}(z,w)=\Phit^{n}_{m}(w)\left[\begin{matrix} z^{n}\\ z^{n-1}\\ \vdots \\ 1\end{matrix}\right],
\end{equation}
\end{subequations}
where $\Phit^{n}_{m}(w)$ is a unique $(n+1)\times(n+1)$ matrix polynomial of degree $m$ in $w$. 
The recurrence relation \eqref{2.11a} and its tilde-analog are equivalent to the recurrence relations for the matrix-valued polynomials 
$\{\Phi^{m}_{n}(z)\}_{n\geq 0}$ and $\{\Phit^{n}_{m}(w)\}_{m\geq 0}$.

We will also need the following Christoffel-Darboux formula, which is a tilde analog of formula (4.1a)-(4.1c) in \cite{GW2}
\begin{equation}\label{3.2}
\begin{split}
&\Phitr_{n,m}(z,w)\Phitr_{n,m}(z_1,w_1)^{\dagger}-\Phitr_{n-1,m}(z,w)\Phitr_{n-1,m}(z_1,w_1)^{\dagger} \\
&\quad -w\wb_1\left[\Phit_{n,m}(z,w)^{T} \overline{\Phit_{n,m}(z_1,w_1)}-\Phit_{n-1,m}(z,w)^{T} \overline{\Phit_{n-1,m}(z_1,w_1)}\right]\\
&\qquad =(1-w\wb_1)\Phi_{n,m}(z,w)^{T} \overline{\Phi_{n,m}(z_1,w_1)},
\end{split}
\end{equation}
and its corollary (see equation (4.2) in \cite{GW2})
\begin{equation}\label{3.3}
\begin{split}
&\Phi_{n,m}(z,w)^{T}\,\overline{\Phi_{n,m}(z_1,w_1)}- \Phi_{n,m-1}(z,w)^{T}\,\overline{\Phi_{n,m-1}(z_1,w_1)}\\
&\quad = \Phit_{n,m}(z,w)^{T}\,\overline{\Phit_{n,m}(z_1,w_1)}-\Phit_{n-1,m}(z,w)^{T}\,\overline{\Phit_{n-1,m}(z_1,w_1)}.
\end{split}
\end{equation}

\subsection{Relations among the coefficients}\label{ss3.2}

We list below different relations among the coefficients defined in 
\eqref{2.12} needed in the paper. 

The tilde analog of formula~(3.52) on page 811 in \cite{GW2} can be written as 
follows
\begin{equation}\label{3.4}
\begin{split}
\Gt_{k+1,l}^{1}\Gt_{k+1,l}^{\dagger}=&\Gt_{k,l}^{\dagger}\Gt_{k,l}^{1}
+\It_{k,l}\Eh_{k+1,l}(\It^{1}_{k,l})^{T}\\
&\quad +\cKt^{1}_{k+1,l}(\bar{A}_{k+1,l-1})^{-1}\Eh_{k+1,l-1}^{\dagger}A_{k+1,l-1}\cKt^{\dagger}_{k+1,l}.
\end{split}
\end{equation}

We also need formulas (3.1), (3.4) and (3.6) from \cite{GB}:
\begin{subequations}\label{3.5}
\begin{align}
&\Eh_{k+1,l-1}=\G_{k,l}\Eh_{k+1,l}(\G^1_{k,l})^{T}+\cK_{k,l}(\cK^{1}_{k,l})^{T},\label{3.5a}\\
&\G_{k,l}\Eh_{k+1,l}I^{1}_{k,l}=A_{k+1,l-1}\cK_{k+1,l}-\cK_{k,l}\Gt^{1}_{k,l},\label{3.5b}\\
&I_{k,l}^{\dagger}\Eh_{k+1,l}(\G^{1}_{k,l})^{T}
=(\cK^{1}_{k+1,l})^{T}A_{k+1,l-1}^{T}-\Gt^{\dagger}_{k,l}(\cK^{1}_{k,l})^{T}.\label{3.5c}
\end{align}
\end{subequations}
Recall that if $\Eh_{k,l}=0$ then $A_{k,l}=I_{l+1}$ is the identity $(l+1)\times(l+1)$ matrix. Using this fact and 
the above formulas, we see that the following lemma holds.
\begin{Lemma}\label{le3.1} 
If
\begin{equation}\label{3.6}
\Eh_{k+1,l}=0 \text{ and }\Eh_{k+1,l-1}=0,
\end{equation}
then
\begin{subequations}\label{3.7}
\begin{align}
\cK_{k,l}(\cK^{1}_{k,l})^{T}&=0,\label{3.7a}\\
\Gt_{k+1,l}^{1}\Gt_{k+1,l}^{\dagger}&=\Gt_{k,l}^{\dagger}\Gt_{k,l}^{1},\label{3.7b}\\
\cK_{k+1,l}&=\cK_{k,l}\Gt^{1}_{k,l},\label{3.7c}\\
(\cK^{1}_{k+1,l})^{T}&=\Gt^{\dagger}_{k,l}(\cK^{1}_{k,l})^{T}.\label{3.7d}
\end{align}
\end{subequations}
\end{Lemma}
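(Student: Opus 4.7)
The plan is to derive all four identities by direct substitution into the four relations already listed in the paper: equations \eqref{3.4} and \eqref{3.5a}--\eqref{3.5c}. The hypothesis $\Eh_{k+1,l-1}=0$ is used twice: once to cancel a term and once via the stated fact $A_{k+1,l-1}=I_{l+1}$, which in turn makes $\bar{A}_{k+1,l-1}$ and $A_{k+1,l-1}^T$ trivial wherever they appear.

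First I would obtain \eqref{3.7a} from \eqref{3.5a}: under the hypothesis $\Eh_{k+1,l}=0$, the first summand $\G_{k,l}\Eh_{k+1,l}(\G^1_{k,l})^T$ vanishes, and the left-hand side $\Eh_{k+1,l-1}$ is zero by assumption, leaving $\cK_{k,l}(\cK^1_{k,l})^T=0$. Next I would get \eqref{3.7c} from \eqref{3.5b}: with $\Eh_{k+1,l}=0$ the left-hand side is zero, so $A_{k+1,l-1}\cK_{k+1,l}=\cK_{k,l}\Gt^1_{k,l}$; applying $A_{k+1,l-1}=I_{l+1}$ (which follows from $\Eh_{k+1,l-1}=0$) yields the claim. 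The same pattern applied to \eqref{3.5c} gives \eqref{3.7d}, using $A_{k+1,l-1}^T=I_{l+1}$.

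Finally, for \eqref{3.7b} I would turn to \eqref{3.4}. The middle term $\It_{k,l}\Eh_{k+1,l}(\It^1_{k,l})^T$ drops out because $\Eh_{k+1,l}=0$, and the last term $\cKt^{1}_{k+1,l}(\bar{A}_{k+1,l-1})^{-1}\Eh_{k+1,l-1}^{\dagger}A_{k+1,l-1}\cKt^{\dagger}_{k+1,l}$ drops out because $\Eh_{k+1,l-1}=0$; note there is no invertibility issue since $A_{k+1,l-1}=I_{l+1}$. What remains is exactly $\Gt_{k+1,l}^{1}\Gt_{k+1,l}^{\dagger}=\Gt_{k,l}^{\dagger}\Gt_{k,l}^{1}$.

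There is no real obstacle here: the lemma is an immediate corollary of the previously recorded identities, and the only subtle point is being careful that $A_{k+1,l-1}$ and its conjugates/transposes are indeed invertible (in fact the identity) under the hypothesis $\Eh_{k+1,l-1}=0$, so that \eqref{3.4} and \eqref{3.5c} can be used without worrying about well-definedness.
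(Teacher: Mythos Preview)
Your argument is correct and is exactly the approach the paper takes: the lemma is stated as an immediate consequence of \eqref{3.4} and \eqref{3.5a}--\eqref{3.5c} together with the fact that $\Eh_{k+1,l-1}=0$ forces $A_{k+1,l-1}$ to be the identity. (One tiny slip: since $A_{n,m}\in M^{m+1,m+1}$, the hypothesis $\Eh_{k+1,l-1}=0$ gives $A_{k+1,l-1}=I_{l}$, not $I_{l+1}$; this does not affect the argument.)
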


\subsection{Stability criterion}\label{ss3.3}
Throughout the paper we will use several times the following fact: a polynomial $p(z,w)$ is stable (i.e. non-vanishing for $|z|\leq 1$ and $|w|\leq 1$) if and only if 
\begin{itemize}
\item $p(z,w)\neq 0$ for $|z|=1$ and $|w|\leq 1$, and 
\item $p(z,w)\neq 0$ for $|z|\leq1$ and $|w|= 1$.
\end{itemize}
The above criterion is a simple corollary from the well-known stability criteria for bivariate polynomials, see for instance \cite{DGK3}.

\section{One sided stable polynomials}\label{se4}
In this section we prove \thref{th2.3}.  
\subsection{Proof of the implication (i)$\Rightarrow$(ii) in \thref{th2.3}}
Assume first that the conditions in \thref{th2.3}(i) hold, i.e. the moment functional $\cL$ is defined on $\Pinm$ by 
\begin{align}
&\cL(z^kw^l)=\frac{1}{4\pi^2}\int\limits_{[-\pi,\pi]^2}\frac{e^{ik\theta}e^{il\varphi}}{|p(e^{i\theta},e^{i\varphi})|^2}\,d\theta\,d\varphi,\nn\\
&\quad\text{where $p(z,w)$ is of degree $(n,m)$ nonzero for $ |z|=1$ and $|w|\leq1$.}\label{4.1}
\end{align}
We can use \eqref{4.1} to extend the functional $\cL$ on the space of all Laurent 
polynomials $\Cset[z,z^{-1},w,w^{-1}]$. Thus we can define vector polynomials 
$\Phi_{k,l}(z,w)$ for all $k,l\in\Nset_0$.

For every fixed $z=e^{i\theta}\in\Tset$, we denote by $\cLz$ the corresponding positive moment functional on the 
space $\Cset[w,w^{-1}]$ given by 
\begin{equation}\label{4.2}
\cLz(w^l)=\frac{1}{2\pi}\int\limits_{-\pi}^{\pi}\frac{e^{il\varphi}}{|p(e^{i\theta},e^{i\varphi})|^2}\,d\varphi.
\end{equation}
Similarly, for a polynomial $\phi(z,w)$ of degree $(k,l)$ we can fix $z=e^{i\theta}$ on the unit circle and consider the corresponding polynomial $\phi(e^{i\theta},w)$ of degree $l$ in $w$ which depends on the parameter $\theta$. We will denote by 
$\revz{\phi}(e^{i\theta},w)$ the reverse polynomial of 
$\phi(e^{i\theta},w)$, i.e. we set
$$\revz{\phi}(e^{i\theta},w)=w^{l}\phib(e^{-i\theta},1/w).$$

\begin{Lemma}\label{le4.1}
Suppose that \eqref{4.1} holds. Then with respect to $\cLz$ we have
\begin{subequations}\label{4.3}
\begin{align}
& p(e^{i\theta},w)\perp \{w^{l}:l>0\},\label{4.3a}\\
&\revz{p}(e^{i\theta},w) \perp \{w^{l}:l<m\},\label{4.3b}
\end{align}
\end{subequations}
and $\left\|p(e^{i\theta},w)\right\|=\left\|\revz{p}(e^{i\theta},w)\right\|=1$.
\end{Lemma}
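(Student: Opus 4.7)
The plan is to argue by direct computation, using a parameterized Bernstein--Szeg\H{o} orthogonality argument: with $z=e^{i\theta}$ fixed on the unit circle, the one-sided stability of $p(z,w)$ in $w$ is exactly what is needed to convert $\cLz$-inner products into contour integrals around $|w|=1$ that vanish either by Cauchy's theorem on the disk or by deformation to infinity.

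First I would rewrite the density in factorized form. For $w=e^{i\varphi}\in\Tset$ we have $\overline{p(e^{i\theta},w)}=\bar p(e^{-i\theta},1/w)$, and multiplying by $w^m$ gives, by the definition of $\revz{p}$, the identity $\revz{p}(e^{i\theta},w)=w^m\overline{p(e^{i\theta},w)}$. Consequently $|p(e^{i\theta},w)|^2=w^{-m}\,p(e^{i\theta},w)\,\revz{p}(e^{i\theta},w)$ on the torus, so every $\cLz$-inner product reduces to a contour integral of a rational function around $|w|=1$.

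Next I would verify \eqref{4.3a}. For $l>0$, using $\overline{w^l}=w^{-l}$ on the circle and the factorization above,
\[
\cLz\bigl(p(e^{i\theta},w)\,w^{-l}\bigr)
=\frac{1}{2\pi}\int_{-\pi}^{\pi}\frac{e^{-il\varphi}}{\overline{p(e^{i\theta},e^{i\varphi})}}\,d\varphi
=\frac{1}{2\pi i}\oint_{|w|=1}\frac{w^{m-l-1}}{\revz{p}(e^{i\theta},w)}\,dw.
\]
The hypothesis that $p(e^{i\theta},\cdot)$ has no zeros in $|w|\le 1$ translates, via $|\revz{p}(e^{i\theta},w)|=|p(e^{i\theta},w)|$ on $|w|=1$, into the statement that $\revz{p}(e^{i\theta},\cdot)$ has all its $w$-zeros strictly inside $|w|<1$ and is of $w$-degree exactly $m$ (its leading coefficient $\overline{p(e^{i\theta},0)}$ is nonzero by one-sided stability). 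Hence the integrand is holomorphic for $|w|\ge 1$ and decays like $w^{-l-1}=O(w^{-2})$ at infinity, so deforming to $|w|=R\to\infty$ yields zero. Statement \eqref{4.3b} is entirely analogous: the same factorization gives
\[
\cLz\bigl(\revz{p}(e^{i\theta},w)\,w^{-l}\bigr)=\frac{1}{2\pi i}\oint_{|w|=1}\frac{w^{m-l-1}}{p(e^{i\theta},w)}\,dw,
\]
and now $1/p(e^{i\theta},\cdot)$ is holomorphic on $|w|\le 1$; for $l\le m-1$ the integrand is the product of this holomorphic function with the polynomial $w^{m-l-1}$, so Cauchy's theorem on the disk gives zero.

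The norm statements will then be essentially automatic: the factorization of the density gives $\cLz(|p(e^{i\theta},w)|^2)=\frac{1}{2\pi}\int_{-\pi}^{\pi}d\varphi=1$, and the identity $|\revz{p}(e^{i\theta},w)|=|p(e^{i\theta},w)|$ on $|w|=1$ yields $\|\revz{p}(e^{i\theta},w)\|=1$ as well. I do not anticipate a real obstacle; the only mild subtlety is that for isolated values of $\theta$ the $w$-degree of $p(e^{i\theta},\cdot)$ can drop below $m$, but one-sided stability still forces the remaining $w$-zeros of $p(e^{i\theta},\cdot)$ to lie in $|w|>1$, keeps $p(e^{i\theta},0)$ nonzero, and keeps the $w$-degree of $\revz{p}(e^{i\theta},\cdot)$ equal to $m$, so all of the contour-deformation arguments above go through uniformly in $\theta$.
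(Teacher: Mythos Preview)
Your proof is correct and follows essentially the same approach as the paper: convert the $\cLz$-inner products into contour integrals over $|w|=1$ and invoke Cauchy's theorem, using the one-sided stability of $p(e^{i\theta},\cdot)$. The only difference is cosmetic: for \eqref{4.3a} the paper cancels $\overline{p}$ against $|p|^2$ to obtain the integrand $w^{l-1}/p(e^{i\theta},w)$, which is holomorphic on the closed unit disk and vanishes by Cauchy's theorem there, thereby sidestepping your deformation to infinity and the attendant discussion of the $w$-degree of $\revz{p}$.
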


\begin{proof}
We have 
$$\cLz\left(w^{l}\,\overline{p(e^{i\theta},w)}\right)=\frac{1}{2\pi}\int\limits_{-\pi}^{\pi}\frac{e^{il\varphi}\overline{p(e^{i\theta},e^{i\varphi})}}{|p(e^{i\theta},e^{i\varphi})|^2}d\varphi=-\frac{i}{2\pi}\oint\limits_{\Tset}\frac{w^{l-1}}{p(e^{i\theta},w)}dw=0,$$
for $l>0$ by Cauchy's residue theorem, establishing \eqref{4.3a}. 
The second orthogonality follows by a similar computation. The assertion about the norms of $p(e^{i\theta},w)$ and $\revz{p}(e^{i\theta},w)$ is straightforward.
\end{proof}
We would like to construct now polynomials $\{\phiz{l}(w)\}_{l\geq 0}$ 
orthonormal with respect to $\cLz$. From \leref{le4.1} it follows that we can 
take 
\begin{equation}\label{4.4}
\phiz{l}(w)=w^{l-m}\revz{p}(e^{i\theta},w), \text{ for }l\geq m.
\end{equation}

Let us denote by $\Cz{l}$ the $(l+1)\times (l+1)$ Toeplitz matrix 
associated with $\cLz$, i.e. if we put $\cz{j}=\cLz(w^{-j})$ then 
\begin{equation*}
\Cz{l} = \left[
\begin{matrix}
\cz{0} & \cz{-1} & \cdots & \cz{-l}
\\
\cz{1} & \cz{0} & \cdots & \cz{-l+1}
\\
\vdots &  & \ddots & \vdots \\ 
\cz{l} & \cz{l-1} & \cdots & \cz{0}
\end{matrix}
\right].
\end{equation*}
Recall that we can use the coefficients of the orthonormal polynomial 
$\phiz{l}(w)$ to compute the inverse of $\Cz{l-1}$ via the 
Gohberg-Semencul formula \cite[Theorem 6.2, page 88]{GF}. Explicitly, if we set 
\begin{equation}\label{4.5}
\overleftarrow{\phiz{l}}(w)=\sum_{j=0}^{l}r^{l}_jw^j,
\end{equation}
then
\begin{align}
(\Cz{l-1})^{-1}&=
\left[\begin{matrix}r^{l}_0 && &\bigcirc\\
r^{l}_1 &\ddots\\
\vdots \\
r^{l}_{l-1} &\cdots & &r^{l}_0
\end{matrix}\right]
\left[\begin{matrix}\overline{r^{l}_0} & \overline{r^{l}_1} &\dots & \overline{r^{l}_{l-1}}\\
&\ddots \\
& \\
\bigcirc && & \overline{r^{l}_0}\end{matrix}\right]\nn\\
&\qquad -
\left[\begin{matrix}\overline{r^{l}_{l}} && &\bigcirc\\
\overline{r^{l}_{l-1}} &\ddots\\
\vdots \\
\overline{r^{l}_{1}} &\cdots & &\overline{r^{l}_l}
\end{matrix}\right]
\left[\begin{matrix}r^{l}_l & r^{l}_{l-1} &\dots & r^{l}_{1}\\
&\ddots \\
& \\
\bigcirc && & r^{l}_{l}\end{matrix}\right].\label{4.6}
\end{align}
\begin{Lemma}\label{le4.2}
Suppose that \eqref{4.1} holds for all $(k,l)\in\Zset^2$. Then
\begin{equation}\label{4.7}
\Eh_{k,l}=0 \text{ for }k\geq n+1 \text{ and }l\geq m-1.
\end{equation}
\end{Lemma}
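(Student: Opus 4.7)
The plan is to identify $\Eh_{k,l}$ with the $k$-th matrix Verblunsky coefficient of the matrix-valued orthogonal polynomial sequence $\{\Phi^{l}_{k}(z)\}_{k\ge 0}$ from \eqref{3.1a}, and then to show that the corresponding matrix weight is of matrix Bernstein-Szeg\H{o} type of degree $n$, which forces these coefficients to vanish for $k>n$. The bridge between the bivariate integral \eqref{4.1} and this matrix picture is provided by the parametric functional $\cLz$ and the Gohberg-Semencul formula \eqref{4.6} applied to the Toeplitz matrix $\Cz{l}$.

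A short calculation shows that whenever $l+1\ge m$ (i.e.\ $l\ge m-1$), formula \eqref{4.4} with $l$ replaced by $l+1$ combined with $\revz{p}(e^{i\theta},w)=w^{m}\bar p(e^{-i\theta},1/w)$ yields $\overleftarrow{\phiz{l+1}}(w)=p(e^{i\theta},w)$, a polynomial of degree $m$ in $w$ that is independent of $l$. Hence, in the notation of \eqref{4.5} (applied with $l+1$ in place of $l$), the coefficients satisfy $r^{l+1}_{j}=p_{j}(e^{i\theta})$ for $0\le j\le m$ and $r^{l+1}_{j}=0$ for $j>m$, where $p(z,w)=\sum_{j=0}^{m}p_{j}(z)w^{j}$. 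Substituting these values into \eqref{4.6} exhibits $(\Cz{l})^{-1}$ as a matrix whose entries are trigonometric polynomials in $e^{i\theta}$ of degree at most $n$; this is where the threshold $n$ enters, since each $p_{j}(z)$ has $z$-degree at most $n$.

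Unfolding the 2D inner product induced by \eqref{4.1} in the basis $\{w^{j}\}_{j=0}^{l}$ then shows that $\{\Phi^{l}_{k}(z)\}_{k\ge 0}$ is the sequence of matrix orthonormal polynomials on the unit circle with respect to the matrix Toeplitz weight $\Cz{l}$, and that $\Eh_{k,l}$ is precisely the matrix Verblunsky coefficient in the associated Szeg\H{o} recursion \eqref{2.11a}. By the previous step this weight is matrix Bernstein-Szeg\H{o} of degree $n$, and the standard matrix OP argument (based on the Fej\'er-Riesz factorization of the matrix trigonometric polynomial $(\Cz{l})^{-1}$) forces $\Eh_{k,l}=0$ for every $k\ge n+1$.

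The main technical obstacle is carrying out this matrix Bernstein-Szeg\H{o} argument rigorously — verifying that $\Phi^{l}_{k}(z)$ coincides with the matrix orthonormal polynomial for $\Cz{l}$, and converting the degree bound on the entries of $(\Cz{l})^{-1}$ into vanishing of the matrix Verblunsky coefficients. The passage from $(\Cz{l-1})^{-1}$ in \eqref{4.6} to $(\Cz{l})^{-1}$ via the index shift above is precisely what sharpens the conclusion from $l\ge m$ to $l\ge m-1$.
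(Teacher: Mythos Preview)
Your proposal is correct and follows essentially the same route as the paper's proof: identify $\{\Phi^{l}_{k}(z)\}_{k\ge 0}$ as matrix orthonormal polynomials with respect to $\frac{1}{2\pi}\Cz{l}$, use \eqref{4.4}--\eqref{4.6} to see that $(\Cz{l})^{-1}$ is a matrix trigonometric polynomial of degree at most $n$, and invoke the matrix Bernstein--Szeg\H{o} fact that this forces $\Eh_{k,l}=0$ for $k\ge n+1$. Your explicit treatment of the index shift (applying \eqref{4.6} with $l+1$ in place of $l$ and computing $\overleftarrow{\phiz{l+1}}(w)=p(e^{i\theta},w)$) spells out precisely what the paper compresses into ``This follows immediately from \eqref{4.4}, \eqref{4.5} and \eqref{4.6}.''
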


\begin{proof}
Note that for fixed $l\geq m-1$, the matrix polynomials 
$\{\Phi_k^{l}(z)\}_{k\geq 0}$ defined in \ssref{ss3.1} are orthonormal on 
$[-\pi,\pi]$ with respect to the matrix weight $\frac{1}{2\pi}\Cz{l}$, i.e.
\begin{equation*}
\frac{1}{2\pi}\int_{-\pi}^{\pi}\Phi^{l}_{k}(e^{i\theta})\Cz{l}
[\Phi^{l}_{j}(e^{i\theta})]^{\dagger}d\theta=\delta_{kj}I_{l+1}.
\end{equation*}
From the theory of matrix-valued orthogonal polynomials it will follow that 
$\Eh_{k,l}=0$ for $k\geq n+1$ if we can show that $(\Cz{l})^{-1}$ is a 
(matrix) trigonometric polynomial in $\theta$ of degree at most $n$. This 
follows immediately from \eqref{4.4}, \eqref{4.5} and \eqref{4.6}.
\end{proof}

\begin{Lemma}\label{le4.3}
Suppose that equation  \eqref{4.7} holds. Then
\begin{equation}\label{4.8}
\cK_{k,l}\left[\Gt^{1}_{k,l}\Gt^{\dagger}_{k,l}\right]^j(\cK^{1}_{k,l})^{T}=0, 
\text{ for all }j\geq 0,\quad k\geq n,\quad l\geq m.
\end{equation}
\end{Lemma}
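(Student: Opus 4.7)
The plan is to exploit Lemma 3.1 at the shifted indices $(k+i,l)$ for $i\geq 0$, which the hypothesis \eqref{4.7} makes applicable, in order to derive a recursion that lowers the exponent $j$ while shifting the first index upward by one.

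First I would observe that for any $k\geq n$ and $l\geq m$, equation \eqref{4.7} gives both $\Eh_{k+1,l}=0$ and $\Eh_{k+1,l-1}=0$, so the identities \eqref{3.7a}--\eqref{3.7d} of \leref{le3.1} are all valid at $(k,l)$. In particular the case $j=0$ of \eqref{4.8} is exactly \eqref{3.7a}.

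For $j\geq 1$, the key algebraic maneuver is the rewriting
\[
[\Gt^{1}_{k,l}\Gt^{\dagger}_{k,l}]^{j}=\Gt^{1}_{k,l}\,[\Gt^{\dagger}_{k,l}\Gt^{1}_{k,l}]^{j-1}\,\Gt^{\dagger}_{k,l},
\]
followed by the replacement $\Gt^{\dagger}_{k,l}\Gt^{1}_{k,l}=\Gt^{1}_{k+1,l}\Gt^{\dagger}_{k+1,l}$ from \eqref{3.7b}. Sandwiching between $\cK_{k,l}$ and $(\cK^{1}_{k,l})^{T}$, and applying \eqref{3.7c} on the left (so $\cK_{k,l}\Gt^{1}_{k,l}=\cK_{k+1,l}$) together with \eqref{3.7d} on the right (so $\Gt^{\dagger}_{k,l}(\cK^{1}_{k,l})^{T}=(\cK^{1}_{k+1,l})^{T}$), I obtain the recursion
\[
\cK_{k,l}[\Gt^{1}_{k,l}\Gt^{\dagger}_{k,l}]^{j}(\cK^{1}_{k,l})^{T}
=\cK_{k+1,l}[\Gt^{1}_{k+1,l}\Gt^{\dagger}_{k+1,l}]^{j-1}(\cK^{1}_{k+1,l})^{T}.
\]
Iterating $j$ times reduces the right-hand side to $\cK_{k+j,l}(\cK^{1}_{k+j,l})^{T}$, which vanishes by the base case \eqref{3.7a} applied at the index $(k+j,l)$.

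I do not anticipate any substantive obstacle; the only care needed is to verify that \leref{le3.1} genuinely applies at each intermediate index $(k+i,l)$ used in the iteration, which follows instantly from \eqref{4.7} because $k+i+1\geq n+1$ and both $l$ and $l-1$ are at least $m-1$ throughout.
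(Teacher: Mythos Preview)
Your proposal is correct and follows essentially the same approach as the paper: both arguments use the identities \eqref{3.7a}--\eqref{3.7d} of \leref{le3.1} at the shifted indices $(k+i,l)$ to reduce $\cK_{k,l}[\Gt^{1}_{k,l}\Gt^{\dagger}_{k,l}]^{j}(\cK^{1}_{k,l})^{T}$ to $\cK_{k+j,l}(\cK^{1}_{k+j,l})^{T}$, which vanishes by \eqref{3.7a}. The only cosmetic difference is that the paper phrases this as an induction on $j$ establishing the closed-form identity \eqref{4.9}, whereas you phrase it as a one-step recursion iterated $j$ times; the underlying algebra is identical.
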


\begin{proof}
From \leref{le3.1} we see that equations \eqref{3.7} hold as long as 
$k\geq n$ and $l\geq m$.
First, we would like to show by induction on $j\in\Nset_0$ that
\begin{equation}\label{4.9}
\cK_{k+j,l}(\cK^{1}_{k+j,l})^{T}=\cK_{k,l}\left[\Gt^{1}_{k,l}\Gt^{\dagger}_{k,l}\right]^j(\cK^{1}_{k,l})^{T}, \text{ for }k\geq n,\quad l\geq m.
\end{equation}
If $j=0$, the above statement is obvious. Suppose now that \eqref{4.9} holds 
for some $j\geq 0$. From \eqref{3.7b} it follows that 
\begin{equation*}
\left[\Gt^{1}_{k,l}\Gt^{\dagger}_{k,l}\right]^{j+1}
=\Gt^{1}_{k,l}\left[\Gt^{1}_{k+1,l}\Gt^{\dagger}_{k+1,l}\right]^{j} \Gt^{\dagger}_{k,l}.
\end{equation*}
Using the above formula we find
\begin{align*}
&\cK_{k,l}\left[\Gt^{1}_{k,l}\Gt^{\dagger}_{k,l}\right]^{j+1}(\cK^{1}_{k,l})^{T} 
= \cK_{k,l}\Gt^{1}_{k,l}\left[\Gt^{1}_{k+1,l}\Gt^{\dagger}_{k+1,l}\right]^{j} \Gt^{\dagger}_{k,l}(\cK^{1}_{k,l})^{T}\\
&\qquad=\cK_{k+1,l}\left[\Gt^{1}_{k+1,l}\Gt^{\dagger}_{k+1,l}\right]^{j} (\cK^{1}_{k+1,l})^{T}\text{ (by equations \eqref{3.7c} and \eqref{3.7d})}\\
&\qquad=\cK_{k+1+j,l}(\cK^{1}_{k+1+j,l})^{T} \text{ (by the induction hypothesis)},
\end{align*}
establishing \eqref{4.9} for $j+1$ and completing the induction. From 
\eqref{3.7a} we see that the left-hand side of \eqref{4.9} is equal to $0$ leading to \eqref{4.8}.
\end{proof}

\begin{proof}[Proof of the implication (i)$\Rightarrow$(ii) in \thref{th2.3}]
The proof follows immediately from \leref{le4.2} and \leref{le4.3}.
\end{proof}

\subsection{Proof of the implication (ii)$\Rightarrow$(i) in \thref{th2.3}}

The key ingredient of the proof in the opposite direction, which also explains the construction of the polynomial $p(z,w)$, is the following lemma.

\begin{Lemma}\label{le4.4}
Let $\cL$ be a positive moment functional defined on $\Pinm$. Suppose that there exist unitary matrices $\Ut\in M^{n,n}$ and 
$\Vt\in M^{n+1,n+1}$ such that
\begin{subequations}\label{4.10}
\begin{align}
\Ut^{\dagger}\Phit_{n-1,m}(z,w)&=\left[\begin{matrix}\Psit_{n-1,m}^{(1)}(z,w)\\ \Psit_{n-1,m}^{(2)}(z,w)\end{matrix} \right],\label{4.10a} \\
\intertext{ and } 
\Vt^{\dagger}\Phit_{n,m}(z,w)&=\left[\begin{matrix}\psit_{n,m}^{n}(z,w)\\ z\Psit_{n-1,m}^{(1)}(z,w)\\ \Psit_{n-1,m}^{(2)}(z,w)\end{matrix} \right],\label{4.10b}
\end{align}
\end{subequations}
where $\Psit_{n-1,m}^{(j)}(z,w)$ is an $n_j$-dimensional vector whose components are polynomials of degrees at most $(n-1,m)$ with $n_1+n_2=n$, and $\psit_{n,m}^{n}(z,w)$ is a polynomial of degree at most $(n,m)$. Then 
\begin{equation}\label{4.11}
p(z,w)=\psitr_{n,m}^{n}(z,w)=z^{n}w^{m}\overline{\psit_{n,m}^{n}(1/\zb,1/\wb)}
\end{equation}
is a polynomial of degree at most $(n,m)$, nonzero for $|z|=1$, $|w|\leq 1$ and equations \eqref{2.13} and \eqref{2.15} hold.
\end{Lemma}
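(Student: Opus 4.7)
The plan is to establish in turn (a) the degree bound on $p$, (b) the identity \eqref{2.15}, (c) non-vanishing of $p$ on $|z|=1$, $|w|\leq 1$, and (d) the integral representation \eqref{2.13}. Part (a) is immediate: $\psit_{n,m}^{n}$ has degree at most $(n,m)$ by the form of $\Vt^{\dagger}\Phit_{n,m}$, so its reverse $p=\psitr_{n,m}^{n}$ also has degree at most $(n,m)$.

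For part (b) I would exploit the unitarity of $\Ut$ and $\Vt$. Being constant unitary matrices, they preserve the quadratic form $F\mapsto F^{T}\overline{F}$, with the sole caveat that the block containing $z\Psit_{n-1,m}^{(1)}$ picks up a factor $|z|^{2}$. Direct computation gives, for every $(z,w)$,
\begin{equation*}
\Phit_{n,m}^{T}\overline{\Phit_{n,m}}-\Phit_{n-1,m}^{T}\overline{\Phit_{n-1,m}}=|\psit_{n,m}^{n}|^{2}+(|z|^{2}-1)\,(\Psit_{n-1,m}^{(1)})^{T}\overline{\Psit_{n-1,m}^{(1)}}.
\end{equation*}
Taking the componentwise reverse of $\Vt^{\dagger}\Phit_{n,m}$, and observing that the reverse of $z\Psit_{n-1,m}^{(1)}$ as a polynomial of degree $(n,m)$ coincides with the reverse of $\Psit_{n-1,m}^{(1)}$ as a polynomial of degree $(n-1,m)$, while the reverse of $\Psit_{n-1,m}^{(2)}$ as $(n,m)$ equals $z$ times its reverse as $(n-1,m)$, one obtains
\begin{equation*}
\Phitr_{n,m}\Phitr_{n,m}^{\dagger}-\Phitr_{n-1,m}\Phitr_{n-1,m}^{\dagger}=|p|^{2}+(|z|^{2}-1)\,(\overleftarrow{\Psit_{n-1,m}^{(2)}})^{T}\overline{\overleftarrow{\Psit_{n-1,m}^{(2)}}}.
\end{equation*}
On $\Tset^{2}$ the $(|z|^{2}-1)$ corrections vanish and $|\psit_{n,m}^{n}|=|p|$, so both sides collapse to $|p|^{2}$ and the second equality of \eqref{2.15} follows. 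The first equality is then the Christoffel--Darboux identity \eqref{3.3}.

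For part (c) I would substitute the two identities above into the Christoffel--Darboux formula \eqref{3.2} at $(z_{1},w_{1})=(z,w)$. After the $|z|=1$ simplifications and the consequence $|\Phi_{n,m}|^{2}-|\Phi_{n,m-1}|^{2}=|\psit_{n,m}^{n}|^{2}$ of \eqref{3.3}, this produces
\begin{equation*}
|p(z,w)|^{2}=|\psit_{n,m}^{n}(z,w)|^{2}+(1-|w|^{2})\,|\Phi_{n,m-1}(z,w)|^{2},\qquad |z|=1,
\end{equation*}
which is manifestly non-negative for $|w|\leq 1$. To upgrade to strict positivity I would argue that for each $|z_{0}|=1$ the polynomial $w\mapsto\psit_{n,m}^{n}(z_{0},w)$ is, up to a positive scalar, a genuine degree-$m$ Szeg\H{o} orthonormal polynomial with respect to a positive functional on the unit circle in $w$. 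Indeed, as a unitary combination of $\{\phit_{n,m}^{s}\}$ the polynomial $\psit_{n,m}^{n}$ inherits the orthogonality $\cL(\psit_{n,m}^{n}z^{-k}w^{-l})=0$ for $0\leq k\leq n$, $0\leq l<m$, and unitarity of $\Vt$ further supplies the crucial relation $\psit_{n,m}^{n}\perp z\Psit_{n-1,m}^{(1)}$. A parametric contraction in $z$ at $z_{0}$ converts these into one-variable Szeg\H{o} orthogonality in $w$, whence the Szeg\H{o} zero-localization theorem places every zero of $\psit_{n,m}^{n}(z_{0},\cdot)$ strictly inside $|w|<1$ and forces its $w^{m}$-coefficient to be nonzero. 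This is exactly the condition for $p(z_{0},w)=z_{0}^{n}\,\overleftarrow{\psit_{n,m}^{n}(z_{0},\cdot)}(w)$ to be nonvanishing on $|w|\leq 1$.

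For part (d), once $p$ is stable on $|z|=1$, $|w|\leq 1$ and in particular nowhere zero on $\Tset^{2}$, the measure $d\nu=\frac{d\theta\,d\varphi}{4\pi^{2}|p|^{2}}$ is a finite positive Borel measure, and I would match its moments with $\cL$ on $\Pinm$. Combining the identity $|p|^{2}=\Phit_{n,m}^{T}\overline{\Phit_{n,m}}-\Phit_{n-1,m}^{T}\overline{\Phit_{n-1,m}}$ on $\Tset^{2}$ with the tilde analog of the Gohberg--Semencul residue computation of \leref{le4.2} shows that the reverse-lexicographic Gram--Schmidt orthonormal polynomials of $d\nu$ on $\Pinm$ agree with $\{\phit_{n,m}^{s}\}$, and equality of the moments follows from uniqueness of the Gram--Schmidt construction. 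The main obstacle is the Szeg\H{o} analysis in part (c): the non-negative decomposition $|p|^{2}=|\psit_{n,m}^{n}|^{2}+(1-|w|^{2})|\Phi_{n,m-1}|^{2}$ alone does not exclude simultaneous vanishing of $\psit_{n,m}^{n}$ and every component of $\Phi_{n,m-1}$ on $|z|=1$, $|w|<1$, and the essential use of the hypothesis on $\Ut,\Vt$ beyond pure unitarity enters through the extra orthogonality $\psit_{n,m}^{n}\perp z\Psit_{n-1,m}^{(1)}$ which promotes $\psit_{n,m}^{n}(z_{0},\cdot)$ to a genuine Szeg\H{o} polynomial in $w$.
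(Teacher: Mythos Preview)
The gap you flag in (c) is real and your proposed fix does not close it. At this point $\cL$ is only a positive functional on $\Pinm$, not an integral against a measure on $\Tset^{2}$, so there is no ``parametric contraction at $z_{0}$'': the two-variable relations $\cL(\psit_{n,m}^{n}\,z^{-k}w^{-l})=0$ and $\psit_{n,m}^{n}\perp z\Psit_{n-1,m}^{(1)}$ do not descend to any one-variable Szeg\H{o} orthogonality for $w\mapsto\psit_{n,m}^{n}(z_{0},w)$, because there is no one-variable functional available to descend to. Indeed you do not even know a priori that this polynomial has degree exactly $m$ in $w$ (its $w^{m}$-coefficient is a polynomial in $z_{0}$ which could vanish), and you invoke that fact as part of the Szeg\H{o} conclusion, making the argument circular. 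Your diagonal identity $|p|^{2}=|\psit_{n,m}^{n}|^{2}+(1-|w|^{2})|\Phi_{n,m-1}|^{2}$ on $|z|=1$ is therefore unable to exclude zeros with $|w_{0}|=1$.

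The paper proceeds differently and needs two ingredients you do not invoke. First, unitarity of $\Ut,\Vt$ yields the \emph{two-point} identity (for $z\zb_{1}=1$ but $w,w_{1}$ independent)
\[
p(z,w)\overline{p(z_{1},w_{1})}-w\wb_{1}\,\pr(z,w)\overline{\pr(z_{1},w_{1})}=(1-w\wb_{1})\,\Phi_{n,m}(z,w)^{T}\overline{\Phi_{n,m}(z_{1},w_{1})},
\]
obtained by inserting the two-point versions of your identities in (b) into \eqref{3.2}. Second, writing $\Phi_{n,m}(z,w)=\Phi_{n}^{m}(z)[w^{m},\dots,1]^{T}$ as in \eqref{3.1a}, matrix-OPUC theory gives $\det\Phi_{n}^{m}(z)\neq0$ for $|z|\geq1$, so $\Phi_{n,m}(z_{0},w)$ is a \emph{nonzero vector} for every $|z_{0}|=1$ and every $w\in\Cset$. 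If $p(z_{0},w_{0})=0$ with $|z_{0}|=|w_{0}|=1$, then also $\pr(z_{0},w_{0})=0$, and letting $w=w_{0}$ with $w_{1}$ arbitrary in the two-point identity forces $\Phi_{n,m}(z_{0},w_{0})^{T}\overline{\Phi_{n,m}(z_{0},w_{1})}=0$ for all $w_{1}$, contradicting nonvanishing; the case $|w_{0}|<1$ follows from the diagonal version by a sign argument. The same matrix-OPUC mechanism then drives (d): the two-point identity matches the Gohberg--Semencul inverse of the parametric Toeplitz matrix $\Cz{m}$ with $\Phi_{n}^{m}(z)^{\dagger}\Phi_{n}^{m}(z)$, and since this matrix weight on $\Tset$ reproduces the polynomials $\{\Phi_{k}^{m}(z)\}_{k\leq n}$, it reproduces the block Toeplitz moment matrix $C_{n,m}$ and hence $\cL$ on $\Pinm$. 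Your sketch of (d) omits this bridge and, as written, assumes what is to be proved.
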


\begin{proof}
From equations \eqref{4.10} and \eqref{4.11} it follows that
\begin{subequations}\label{4.12}
\begin{align}
&\Phitr_{n,m}(z,w)\Phitr_{n,m}(z_1,w_1)^{\dagger}-\Phitr_{n-1,m}(z,w)\Phitr_{n-1,m}(z_1,w_1)^{\dagger}  \nn\\
&\quad =p(z,w)\overline{p(z_1,w_1)} \qquad\text{ for }z\zb_1=1,\label{4.12a}
\intertext{and}
&\Phit_{n,m}(z,w)^{T} \overline{\Phit_{n,m}(z_1,w_1)}
-\Phit_{n-1,m}(z,w)^{T} \overline{\Phit_{n-1,m}(z_1,w_1)} \nn\\
&\qquad = \pr(z,w)\overline{\pr(z_1,w_1)} \qquad\text{ for }z\zb_1=1,
\label{4.12b}
\end{align}
\end{subequations}
where $\pr(z,w)=z^{n}w^{m}\overline{p(1/\zb,1/\wb)}$.
Plugging equations \eqref{4.12} in \eqref{3.2} we obtain
\begin{align}
&p(z,w)\overline{p(1/\zb,w_1)} -w\wb_1\pr(z,w)\overline{\pr(1/\zb,w_1)}\nn\\
&\qquad =(1-w\wb_1)\Phi_{n,m}(z,w)^{T} \overline{\Phi_{n,m}(1/\zb,w_1)}.
\label{4.13}
\end{align}
Using the last equation we can prove that $p(z,w)$ is nonzero for $|z|=1$ and 
$|w|\leq 1$. Recall first that the vector polynomials $\Phi_{n,m}(z,w)$ can be 
connected to the matrix polynomials $\Phi^{m}_{n}(z)$ via \eqref{3.1a}. 
Moreover the matrix-valued orthogonal polynomials $\{\Phi_{k,m}(z)\}_{k=0}^{n}$ constructed in \ssref{ss3.1} are orthonormal with respect to the matrix inner product 
\begin{subequations}\label{4.14}
\begin{equation}\label{4.14a}
\langle A, B\rangle=\cL(AM_{m}(w)B^{\dagger}), 
\end{equation}
where $M_m(w)$ is the $(m+1)\times (m+1)$ Toeplitz matrix
\begin{align}
 M_m(w)&= 
 \left[\begin{matrix} w^{m}\\ w^{m-1}\\ \vdots \\ 1\end{matrix}\right]\, 
  \left[\begin{matrix} w^{-m} & w^{-m+1} & \dots & 1\end{matrix}\right]\nn\\
& \qquad=
 \left[
\begin{matrix}
1 & w & \dots & w^{m}
\\
w^{-1} & 1 & \dots & w^{m-1}
\\
\vdots &  & \ddots & \vdots \\ 
w^{-m} & w^{-m+1} & \dots & 1
\end{matrix}
\right].\label{4.14b}
\end{align}
\end{subequations}
In particular, from the theory of matrix-valued orthogonal polynomials we know that $\det[\Phi^{m}_{n}(z)]\neq0$ for $|z|\geq 1$. This implies that 
\begin{equation}\label{4.15}
\Phi_{n,m}(z,w) \text{ is a nonzero vector for }|z|=1 \text{ and } w\in\Cset.
\end{equation}
Suppose first that $p(z_0,w_0)=0$ for some $|z_0|=1$ and $|w_0|<1$. Then 
using \eqref{4.13} with $z=z_0$ and $w=w_1=w_0$ we obtain
\begin{equation*}
-|w_0|^2|\pr(z_0,w_0)|^2=(1-|w_0|^2)\Phi_{n,m}(z_0,w_0)^{T} \overline{\Phi_{n,m}(z_0,w_0)}.
\end{equation*}
Since the left-hand side of the above equation is $\leq 0$ and the right-hand side is $\geq 0$, we see that $\Phi_{n,m}(z_0,w_0)$ must be the zero vector, which contradicts \eqref{4.15}. 

Suppose now that $p(z_0,w_0)=0$ for some $|z_0|=1$ and $|w_0|=1$. Then $\pr(z_0,w_0)=0$ and therefore equation \eqref{4.13} with $z=z_0$, $w=w_0$ and $w_1\neq w_0$ gives
\begin{equation*}
\Phi_{n,m}(z_0,w_0)^{T} \overline{\Phi_{n,m}(z_0,w_1)}=0 \text{ for all }
w_1\neq w_0,
\end{equation*}
which implies that $\Phi_{n,m}(z_0,w_0)$ is the zero vector leading to a 
contradiction, thus proving the required stability for $p(z,w)$.

Note that equation \eqref{2.15} follows easily from \eqref{4.12a} and \eqref{3.3}. Thus,
it remains to prove that equation \eqref{2.13} holds. Let us denote by 
$p_l(z)$ the coefficient of $w^l$ in $p(z,w)$, i.e. we set
\begin{equation}
p(z,w)=\sum_{l=0}^{m}p_l(z)w^{l}.
\end{equation}
Then a straightforward computation shows that for $|z|=1$ we have
\begin{equation}\label{4.17}
\begin{split}
&\frac{p(z,w)\overline{p(z,w_1)} -w\wb_1\pr(z,w)\overline{\pr(z,w_1)}}{1-w\wb_1}\\
&\quad=\left[\begin{matrix} 1 & w & \cdots &w^{m}\end{matrix}\right]
\left(\rule{0cm}{1.4cm}\right.
\left[\begin{matrix}p_0(z) && &\bigcirc\\
p_1(z) &\ddots\\
\vdots \\
p_{m}(z) &\cdots & &p_0(z)
\end{matrix}\right]
\left[\begin{matrix}\overline{p_0(z)} & \overline{p_1(z)} &\dots & 
\overline{p_m(z)}\\
&\ddots \\
& \\
\bigcirc && & \overline{p_0(z)}\end{matrix}\right]\\
&\qquad -
\left[\begin{matrix}0 && & &\bigcirc\\
\overline{p_m(z)} &\ddots\\
\vdots \\
\overline{p_1(z)} &\cdots & & \overline{p_m(z)}&  0
\end{matrix}\right]
\left[\begin{matrix}0 & p_m(z) &\dots & p_1(z)\\
&\ddots & &\vdots\\
& & &p_m(z)\\
\bigcirc && & 0\end{matrix}\right]
\left. \rule{0cm}{1.4cm}\right)
\left[\begin{matrix} 1 \\ \wb_1 \\ \vdots \\ \wb_1^{m}\end{matrix}\right].
\end{split}
\end{equation}
From \eqref{3.1a} we see that 
\begin{equation}\label{4.18}
\Phi_{n,m}(z,w)^{T} \overline{\Phi_{n,m}(z,w_1)}=
\left[\begin{matrix} 1 & w & \cdots &w^{m}\end{matrix}\right] J_m\Phi^{m}_{n}(z)^{T} \overline{\Phi^{m}_{n}(z)}J_m
\left[\begin{matrix} 1 \\ \wb_1 \\ \vdots \\ \wb_1^{m}\end{matrix}\right],
\end{equation}
where $J_m=[\delta_{i,m-j}]_{0\leq i,j\leq m}$. From equations \eqref{4.13}, \eqref{4.17} and \eqref{4.18} it follows that for 
$|z|=1$ we have 
\begin{equation}\label{4.19}
\begin{split}
&\left[\begin{matrix}p_0(z) && &\bigcirc\\
p_1(z) &\ddots\\
\vdots \\
p_{m}(z) &\cdots & &p_0(z)
\end{matrix}\right]
\left[\begin{matrix}\overline{p_0(z)} & \overline{p_1(z)} &\dots & 
\overline{p_m(z)}\\
&\ddots \\
& \\
\bigcirc && & \overline{p_0(z)}\end{matrix}\right]\\
&\qquad -
\left[\begin{matrix}0 && & &\bigcirc\\
\overline{p_m(z)} &\ddots\\
\vdots \\
\overline{p_1(z)} &\cdots & & \overline{p_m(z)}&  0
\end{matrix}\right]
\left[\begin{matrix}0 & p_m(z) &\dots & p_1(z)\\
&\ddots & &\vdots\\
& & &p_m(z)\\
\bigcirc && & 0\end{matrix}\right]\\
&\qquad =J_m\Phi^{m}_{n}(z)^{T} \overline{\Phi^{m}_{n}(z)}J_m.
\end{split}
\end{equation}
Since $p(z,w)$ is nonzero for $|z|=1$ and $|w|\leq 1$ we see that for fixed $z=e^{i\theta}$ on the unit circle, 
$\phiz{m}(w)=w^{m}\overline{p(e^{i\theta},1/\wb)}$ is an orthonormal polynomial of degree $m$ with respect to the (parametric) moment functional $\cLz$, with moments 
\begin{equation}\label{4.20}
\cz{l}=\cLz(w^{-l})=\frac{1}{2\pi}\int\limits_{-\pi}^{\pi}\frac{e^{-il\varphi}}{|p(e^{i\theta},e^{i\varphi})|^2}\,d\varphi, \text{ for }|l|\leq m.
\end{equation}
From Gohberg-Semencul formula (see \cite[Theorem 6.1, page 86]{GF}) it follows that the left-hand side of equation \eqref{4.19} is the inverse of the Toeplitz matrix 
\begin{equation*}
\Cz{m} = \left[
\begin{matrix}
\cz{0} & \cz{-1} & \cdots & \cz{-m}
\\
\cz{1} & \cz{0} & \cdots & \cz{-m+1}
\\
\vdots &  & \ddots & \vdots \\ 
\cz{m} & \cz{m-1} & \cdots & \cz{0}
\end{matrix}
\right].
\end{equation*}
Since $J_m^2=I_{m+1}$ and $J_m\Cz{m} J_m=(\Cz{m})^{T}$, equation \eqref{4.19} gives
 \begin{equation}\label{4.21}
\Cz{m}=[\Phi^{m}_{n}(z)^{\dagger}\,\Phi^{m}_{n}(z)]^{-1}, \text{ where }z=e^{i\theta}.
\end{equation}
From the theory of matrix-valued orthogonal polynomials we know that the matrix 
weight on the right-hand side of \eqref{4.21} generates the same matrix-valued orthonormal polynomials $\{\Phi^{m}_{k}(z)\}_{0\leq k\leq n}$ and therefore
\begin{equation*}
\cL(z^{k}M_m(w))=\frac{1}{2\pi}\int\limits_{-\pi}^{\pi}e^{ik\theta}\Cz{m}d\theta\quad  \text{ for }-n\leq k\leq n.
\end{equation*}
From the first row of the last matrix equation we find that for $-n\leq k\leq n$ we have
\begin{equation}\label{4.22}
\cL(z^{k}w^{l})=\frac{1}{2\pi}\int\limits_{-\pi}^{\pi}e^{ik\theta}\cz{-l}d\theta.
\end{equation}
The proof of \eqref{2.13} follows at once from equations \eqref{4.20} and \eqref{4.22}.
\end{proof}

\begin{Remark}\label{re4.5}
To complete the proof of \thref{th2.3} we need to show that equation \eqref{2.14} implies the existence of unitary matrices $\Ut$ and $\Vt$ such that equations \eqref{4.10} hold. For moment functionals satisfying \eqref{2.16} the existence of such matrices follows easily from the tilde analog of formulas \eqref{2.19}. 
In general (for one-sided stability) the construction of $\Ut$ and $\Vt$ is the content of the next two lemmas.

Note also that if we know $\Vt$ we can compute explicitly $p(z,w)$ in equation \eqref{2.13} from \eqref{4.10b} and \eqref{4.11}. In \thref{th2.3} we gave the simplest formula for $|p(z,w)|^2$, which involves only the orthogonal polynomials. However, one can easily extract from the proof of \leref{le4.4} other formulas which can be used in practice to compute $p(z,w)$. For instance, we can use \eqref{4.12a} (which is stronger than \eqref{2.15}), or setting $w_1=0$ in \eqref{4.13} we obtain
$$p(z,w)\pb(1/z,0)=\Phi_{n,m}(z,w)^{T} \overline{\Phi_{n,m}}(1/z,0),$$
which gives $p(z,w)$ up to a factor depending only on $z$.
\end{Remark}

\begin{Lemma} \label{le4.6}
Let $\cK$ and $\cK^{1}$ be $m\times n$ matrices, and let
$r= \rank(\cK)$, $r^{1}=\rank(\cK^{1})$. Then the following
conditions are equivalent:
\begin{itemize}
\item[(i)] $\cK(\cK^{1})^{T}=0$;
\item[(ii)] We have 
\begin{equation}\label{4.23}
\cK=US\Ut^{\dagger}, \qquad \cK^{1}=U^{1}S^{1}\Ut^{T}, 
\end{equation}
where $U, U^{1}\in M^{m,m}$, $\Ut\in M^{n,n}$ are unitary and $S,S^{1}$ are 
$m\times n$ ``diagonal'' matrices with block structures of the form
\begin{subequations}\label{4.24}
\begin{equation}\label{4.24a}
S=\left[\begin{array}{ccc|c} 
s_1 & & & \\
& \ddots & & 0 \\
& & s_{r} & \\
\hline
 & 0& & 0
\end{array}\right],
\end{equation}
and
\begin{equation}\label{4.24b}
S^{1}=\left[\begin{array}{c|ccc} 
0 & & 0 & \\
\hline
  & s^{1}_1 & &   \\
0  &      & \ddots & \\
   &  & & s^{1}_{r^1}
\end{array}\right],
\end{equation}
\end{subequations}
with positive $s_{1},\dots s_{r}$, $s^{1}_{1},\dots,s^{1}_{r^1}$ and 
$r+r^{1}\leq n$.
\end{itemize}
\end{Lemma}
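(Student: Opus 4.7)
The plan is to view the equivalence as a simultaneous structured SVD of $\cK$ and $\cK^1$ that is compatible with the transpose pairing $\cK(\cK^1)^T$.

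First, the easy direction (ii)$\Rightarrow$(i) is a direct computation: using $\tilde U^\dagger \tilde U = I_n$,
\begin{equation*}
\cK (\cK^1)^T = (U S \tilde U^\dagger)(\tilde U (S^1)^T (U^1)^T) = U S (S^1)^T (U^1)^T,
\end{equation*}
and one checks that $S (S^1)^T = 0$ by observing that the nonzero columns of $S$ have indices in $\{1,\dots,r\}$ while the nonzero rows of $(S^1)^T$ have indices in $\{n-r^1+1,\dots,n\}$; these are disjoint precisely when $r+r^1\leq n$.

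For the harder direction (i)$\Rightarrow$(ii), my plan is to build $\tilde U$ in two SVD stages. Start with any SVD $\cK = U S V^\dagger$ in which the $r$ positive singular values are placed in the top-left diagonal block of $S$, and write $V = [V^{(1)} \mid V^{(2)}]$ with $V^{(2)}\in M^{n,n-r}$ an orthonormal basis of $\ker(\cK)$. The hypothesis $\cK(\cK^1)^T = 0$ says that each row of $\cK^1$ (transposed to a column) lies in $\ker(\cK)$; this already yields $r^1\leq n-r$, i.e.\ the rank inequality, and allows us to write $\cK^1 = C (V^{(2)})^T$ uniquely for some $C\in M^{m,n-r}$ of rank $r^1$.

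Next, take an SVD $C = U^1 D^1 \Omega^\dagger$ with $D^1$'s positive singular values $s^1_1,\dots,s^1_{r^1}$ placed in its bottom-right $r^1\times r^1$ diagonal block. The crucial observation is that $S$ vanishes outside its top-left $r\times r$ block, so $\cK = U S V^\dagger$ depends only on $V^{(1)}$ and is preserved when $V^{(2)}$ is replaced by any other orthonormal basis of $\ker(\cK)$. Setting $\tilde U := [V^{(1)} \mid V^{(2)}\bar\Omega]$ and $S^1 := [\,0_{m\times r}\mid D^1\,]$, the computation
\begin{equation*}
\cK^1 = C (V^{(2)})^T = U^1 D^1 \Omega^\dagger (V^{(2)})^T = U^1 S^1 \tilde U^T
\end{equation*}
simultaneously puts both matrices in the required form, and the nonzero entries of $S^1$ land in the positions $(m-r^1+k,\,n-r^1+k)$ demanded by the lemma.

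The main subtlety, rather than a deep obstacle, is the bookkeeping between transpose and adjoint: the common right factor appears as $\tilde U^\dagger$ in the decomposition of $\cK$ but as $\tilde U^T$ in that of $\cK^1$. Consistency forces the unitary rotation applied to the kernel basis to be the \emph{conjugate} $\bar\Omega$ rather than $\Omega$ itself, and it is exactly this twist that aligns the bottom-right block of $D^1$ with the bottom-right block of the full $m\times n$ matrix $S^1$. Once this is sorted out, neither step requires anything beyond standard SVD manipulations.
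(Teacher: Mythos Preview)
Your proof is correct. The two-stage SVD argument works as written: the key points---that the rows of $\cK^1$ (transposed) lie in $\ker\cK$, that this gives $r+r^1\le n$ and the factorization $\cK^1=C(V^{(2)})^T$, and that rotating the kernel block of $V$ by $\bar\Omega$ leaves $US\tilde U^\dagger$ unchanged while aligning $\cK^1$---are all sound, and you handled the transpose/adjoint bookkeeping carefully.

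The paper's route is slightly different in flavor. Rather than two sequential SVDs, it observes that the Hermitian $n\times n$ matrices $A=\cK^\dagger\cK$ and $B=(\cK^1)^T\overline{\cK^1}$ satisfy $AB=BA=0$ (a consequence of $\cK(\cK^1)^T=0$), hence admit a common orthonormal eigenbasis $\tilde u_1,\dots,\tilde u_n$; the columns of $\tilde U$ are these eigenvectors, ordered so that the positive eigenvalues of $A$ come first and those of $B$ come last, and $U,U^1$ are built by normalizing $\cK\tilde u_i$ and $\cK^1\bar{\tilde u}_j$ respectively. This is a more symmetric treatment of $\cK$ and $\cK^1$ and makes the disjointness of their ``supports'' in $\Cset^n$ transparent via $\lambda_j\mu_j=0$. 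Your approach, by contrast, privileges $\cK$ first and then fits $\cK^1$ into its kernel; it is equally elementary and perhaps makes the freedom in choosing $\tilde U$ (rotating within $\ker\cK$) more explicit. Both arguments are standard linear algebra of comparable length.
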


\begin{Remark*}
Note the the condition $r+r^1\le n$ implies $S(S^1)^T=0$.
\end{Remark*}

\begin{proof}
We focus on the implication (i) $\Rightarrow$ (ii), since the other direction 
is obvious. Consider $A=\cK^{\dagger}\cK$ and $B=(\cK^{1})^{T}\,\overline{\cK^{1}}$. 
Note that $A$ and $B$ are hermitian $n\times n$ matrices such that $AB=BA=0$.
Hence, there exists an orthonormal basis $(\ut_1,\ut_2,\dots, \ut_n)$
for $\Cset^n$ which diagonalizes $A$ and $B$, i.e.
\begin{equation}\label{4.25}
\cK^{\dagger}\cK \ut_j=\lambda_j \ut_j,\quad (\cK^{1})^{T}\overline{\cK^{1}} \ut_j=\mu_j\ut_j
\text{ and } 
\ut_i^{\dagger} \ut_j=\delta_{ij}.
\end{equation}
Let $\Ut$ be the unitary matrix with columns $\ut_1,\dots, \ut_n$.
From \eqref{4.25} we see that
$$\lambda_j=\|\cK \ut_j\|^2\ge 0\text{ and } \mu_j=\|\overline{\cK^{1}} \ut_j\|^2\ge 0.$$
Moreover, the fact $\cK(\cK^{1})^{T}=0$ implies that $\lambda_j\mu_j=0$ for all
$j$.
Suppose now that
\begin{itemize}
\item $\lambda_1,\lambda_2,\dots\lambda_r$ are positive and 
$\lambda_{r+1}=\cdots = \lambda_n=0$;
\item $\mu_{n-r^{1}+1},\dots, \mu_n$ are positive and 
$\mu_1=\mu_2=\cdots=\mu_{n-r^{1}}=0$.
\end{itemize}
Set
\begin{align*}
&s_i =\sqrt{\lambda_i}&& \text{for }i=1,2,\dots,r\\
&s^{1}_j =\sqrt{\mu_{n-r^{1}+j}} &&\text{for }j=1,2,\dots,r^{1}.
\end{align*}
Consider the sets of vectors
\begin{subequations}\label{4.26}
\begin{align}
T&=\left\{ u_i=\frac{\cK \ut_i}{s_i}: i=1,2,\dots, r\right\} \label{4.26a}\\
\intertext{and} 
T_1&=\left\{ u_j=\frac{\cK^{1} \bar{\ut}_{j+n-m}}{s^{1}_{j-m+r^1}}:
j=m-r^{1}+1,\dots, m\right\}.\label{4.26b}
\end{align}
\end{subequations}
Using \eqref{4.25} it is easy to see that $T$ and $T_1$ are orthonormal sets of vectors. Extending the set $T$ to an orthonormal basis for $\Cset^m$ and constructing a matrix with columns these vectors we obtain a unitary $m\times m$ matrix $U$; extending the set $T_1$ to an orthonormal basis for $\Cset^m$ and constructing a matrix with columns these vectors we obtain a unitary $m\times m$ matrix $U^1$. With these matrices one can check that \eqref{4.23} holds.
\end{proof}

\begin{Remark}\label{re4.7}
If know that $\cK(\cK^1)^T=0$ and $\cK^{\dagger}\cK^1=0$ (which are satisfied by the matrices $\cK=\cK_{n,m}$ and $\cK^1=\cK^1_{n,m}$ in \thref{th2.5}) then we can choose $U=U^{1}$ in equation \eqref{4.23}. Indeed, using the notations in the proof of \leref{le4.6} we see that vectors in $T$ are perpendicular to the vectors in $T_1$. Extending the orthonormal set $T\cup T_1$ to an orthonormal basis for $\Cset^m$ we can construct a unitary matrix $U=U^1$ with columns these vectors.
\end{Remark}

\begin{Lemma} \label{le4.8}
Let $G$ be an $n\times n$ matrix. Suppose that 
\begin{equation}\label{4.27}
\begin{split}
(G^k)_{i,j}=0 \text{ for all }&i=1,2,\dots,r, \quad j=n-r^{1}+1,n-r^{1}+2,\dots,n,\\ 
\text{ and }& k=1,2,\dots,n-1,
\end{split}
\end{equation}
where $r+r^{1}<n$. Then, there exists a unitary $n\times n$ block matrix $\cEt$ of 
the form
\begin{equation}\label{4.28}
\cEt=\left[
\begin{array}{c|c|c} 
I_{r}&0&0\\
\hline
0& *&0\\
\hline
0& 0& I_{r^{1}}
\end{array}\right],
\end{equation}
such that the matrix $\cEt^{\dagger}G\cEt$ has the following block structure
\begin{equation}\label{4.29}
\cEt^{\dagger}G \cEt=\left[
\begin{array}{c|c} 
*&0\\
\hline
*& *
\end{array}\right],
\end{equation}
where the zero block in equation \eqref{4.29} above is an 
$n_1\times n_2$ matrix with $n_1\geq r$, $n_2\geq r^{1}$ and $n_1+n_2=n$.
\end{Lemma}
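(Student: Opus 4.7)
The plan is to interpret \eqref{4.27} as a $G$-invariance statement. Split $\Cset^{n}$ into the orthogonal direct sum $V_{1}\oplus V_{2}\oplus V_{3}$, where $V_{1}=\Span\{e_{1},\dots,e_{r}\}$, $V_{3}=\Span\{e_{n-r^{1}+1},\dots,e_{n}\}$, and $V_{2}$ is the span of the remaining standard basis vectors; this is a genuine three-term decomposition precisely because $r+r^{1}<n$. The hypothesis \eqref{4.27} then asserts exactly that $V_{1}\perp G^{k}V_{3}$ for $k=1,\dots,n-1$, while the disjointness of the index sets for $V_{1}$ and $V_{3}$ gives $V_{1}\perp V_{3}$ as well.

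The key construction is the smallest $G$-invariant subspace containing $V_{3}$, namely
\begin{equation*}
Y:=V_{3}+GV_{3}+G^{2}V_{3}+\cdots+G^{n-1}V_{3},
\end{equation*}
which is $G$-invariant by the Cayley--Hamilton theorem. The previous paragraph gives $Y\perp V_{1}$, so $Y\subseteq V_{1}^{\perp}=V_{2}\oplus V_{3}$. Since $V_{3}\subseteq Y$, projecting onto the decomposition $V_{2}\oplus V_{3}$ yields $Y=Y'\oplus V_{3}$ with $Y':=Y\cap V_{2}$. Setting $X':=V_{2}\ominus Y'$ for the orthogonal complement of $Y'$ inside $V_{2}$, we obtain the refined orthogonal decomposition $\Cset^{n}=V_{1}\oplus X'\oplus Y'\oplus V_{3}$.

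I would then build $\cEt$ by listing as successive columns $e_{1},\dots,e_{r}$, next an orthonormal basis of $X'$, next an orthonormal basis of $Y'$, and finally $e_{n-r^{1}+1},\dots,e_{n}$. Since $X',Y'\subseteq V_{2}$, the middle columns are supported only in the middle block of rows, so $\cEt$ is unitary of the precise form \eqref{4.28}. Define $n_{1}:=r+\dim X'$ and $n_{2}:=\dim Y'+r^{1}$; then $n_{1}+n_{2}=n$, $n_{2}=\dim Y\geq r^{1}$ since $V_{3}\subseteq Y$, and $n_{1}=n-\dim Y\geq r$ since $Y\subseteq V_{1}^{\perp}$. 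The last $n_{2}$ columns of $\cEt$ span the $G$-invariant subspace $Y$, so $G$ applied to any of them is again a linear combination of those same columns, producing the zero $n_{1}\times n_{2}$ block in the upper-right corner of $\cEt^{\dagger}G\cEt$, which is \eqref{4.29}.

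The full force of \eqref{4.27} is used only to conclude $Y\perp V_{1}$; everything else is forced linear algebra. The only real obstacle is recognizing the correct geometric reformulation and verifying that the bounds $n_{1}\geq r$, $n_{2}\geq r^{1}$ come out automatically from $V_{3}\subseteq Y\subseteq V_{1}^{\perp}$.
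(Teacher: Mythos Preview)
Your proof is correct and essentially identical to the paper's: both construct the same $G$-invariant subspace $Y=W=V_3+GV_3+\cdots+G^{n-1}V_3$, observe via Cayley--Hamilton and \eqref{4.27} that $V_3\subseteq Y\subseteq V_1^{\perp}$, and build $\cEt$ from orthonormal bases of $Y$ and $Y^{\perp}$ extending the standard bases of $V_3$ and $V_1$, respectively. Your write-up is in fact slightly more explicit than the paper's in verifying that the middle columns lie in $V_2$, which is what guarantees the block form \eqref{4.28}.
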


\begin{proof}
Let $\{e_1,e_2,\dots,e_n\}$ be the standard basis for $\Cset^n$, and let 
$$W_0=\Span\{e_{n-r^{1}+1},e_{n-r^{1}+2},\dots,e_{n}\}.$$
Consider the space 
\begin{equation}\label{4.30}
W=W_0+GW_0+\cdots+G^{n-1}W_0.
\end{equation}
By Cayley-Hamilton theorem, $W$ is the minimal subspace of $\Cset^n$ which 
is $G$-invariant and contains $W_0$. 
From equation \eqref{4.27} it follows that 
\begin{equation}\label{4.31}
\Span\{e_1,e_2,\dots,e_r\}\subset W^{\perp}.
\end{equation}
Let 
\begin{itemize}
\item $\{v_1,v_2,\dots,v_{n_1}\}$ be an orthonormal basis for $W^{\perp}$ which extends $\{e_1,e_2,\dots,e_r\}$, i.e. $v_j=e_j$ for 
$j=1,2,\dots,r$;
\item $\{v_{n_1+1},v_{n_1+2},\dots,v_{n}\}$ be an orthonormal basis for $W$ which extends $\{e_{n-r^{1}+1},e_{n-r^{1}+2},\dots,e_{n}\}$, i.e. $v_j=e_j$ for $j>n-r^{1}$.
\end{itemize}
Then the unitary matrix $\cEt$ with columns $v_1,v_2,\dots,v_{n}$ will have the block structure given in \eqref{4.28}, and the $G$-invariance of $W$ implies equation \eqref{4.29}.
\end{proof}

\begin{proof}[Proof of the implication (ii)$\Rightarrow$(i) in \thref{th2.3}]
Applying \leref{le4.6} with 
$$\cK=\cK_{n,m}\quad\text{ and }\quad \cK^{1}=\cK^{1}_{n,m}$$
we see that there exist unitary matrices $U,U^{1}\in M^{m,m}$, $\Ut\in M^{n,n}$ 
such that equations \eqref{4.23}, \eqref{4.24} hold. Moreover, applying 
\leref{le4.8} with 
$$G=\Ut^{\dagger}\Gt^{1}_{n,m}\Gt^{\dagger}_{n,m} \Ut,$$
we see that $\Ut$ can be modified (if necessary), so that equations 
\eqref{4.23}, \eqref{4.24} hold and 
\begin{equation}\label{4.32}
G=\Ut^{\dagger}\Gt^{1}_{n,m}\Gt^{\dagger}_{n,m} \Ut=\left[
\begin{array}{c|c} 
*&0\\
\hline
*& *
\end{array}\right],
\end{equation}
where the zero block in the equation above is an 
$n_1\times n_2$ matrix with $n_1\geq r=\rank(\cK_{n,m})$, 
$n_2\geq r^{1}=\rank(\cK^{1}_{n,m})$ and $n_1+n_2=n$.

Replacing $\cK_{n,m}$ and $\cK^{1}_{n,m}$ in the tilde analogs of equations \eqref{2.11c} and \eqref{2.11d} with the expressions given in \eqref{4.23} we find
\begin{subequations}\label{4.33}
\end{subequations}
$$\Ut^{\dagger}\Gt_{n,m} \Phit_{n,m} = \Ut^{\dagger}\Phit_{n-1,m} - S^{T} U^{\dagger}\Phi_{n,m-1}, \eqno{(\text{\ref{4.33}c})}$$
and
$$\Ut^{\dagger}\Gt_{n,m}^1 \Phit_{n,m} = z \Ut^{\dagger}\Phit_{n-1,m} - (S^1)^{T}(U^1)^{T}\Phir_{n,m-1}^T. \eqno{(\text{\ref{4.33}d})}$$
With $n_1$ and $n_2$ fixed above, we will use the following notation: for an $n$-dimensional 
vector $\Psit$, we denote by $\Psit^{(1)}$ (resp. $\Psit^{(2)}$) the vector 
which consists of the first $n_1$ (resp. the last $n_2$) entries of the 
vector $\Psit$. Thus if we set
\begin{equation}\label{4.34}
\Psit_{n-1,m}=\Ut^{\dagger}\Phit_{n-1,m},
\end{equation}
then the vector $\Psit_{n-1,m}$ can be represented in the block form
\begin{equation}\label{4.35}
\Psit_{n-1,m}=\left[\begin{matrix}\Psit_{n-1,m}^{(1)}\\ \Psit_{n-1,m}^{(2)}\end{matrix}\right].
\end{equation}
With this choice of a unitary matrix $\Ut$, we want to show that there exists 
a unitary matrix $\Vt$ such that 
\eqref{4.10b} holds. Since the bottom $n_2$ rows of the matrix 
$S^T$ are equal to $0$, we see from equations (\ref{4.33}c) and \eqref{4.35} 
that
\begin{subequations}\label{4.36}
\end{subequations}
$$(\Ut^{\dagger}\Gt_{n,m} \Phit_{n,m})^{(2)} = \Psit_{n-1,m}^{(2)}. \eqno{(\text{\ref{4.36}c})}$$
Similarly, since the first $n_1$ rows of the matrix $(S^{1})^T$ are equal to $0$, we see from equations (\ref{4.33}d) and \eqref{4.35} 
that
$$(\Ut^{\dagger}\Gt_{n,m}^1 \Phit_{n,m})^{(1)} = z\Psit_{n-1,m}^{(1)}. \eqno{(\text{\ref{4.36}d})}$$
Equations \eqref{4.36} show that the entries of the vector polynomials 
$z\Psit_{n-1,m}^{(1)}$ and $\Psit_{n-1,m}^{(2)}$ are linear combinations of the 
entries of the vector polynomial $\Phit_{n,m}$ and therefore, they are orthogonal 
with respect to $\cL$ to all monomials of degree at most $(n,m-1)$.
Moreover, since $\Ut$ is unitary, it follows from \eqref{4.34} and 
\eqref{4.35} that the entries of each of the vectors $z\Psit_{n-1,m}^{(1)}$ and 
$\Psit_{n-1,m}^{(2)}$ form orthonormal sets of polynomials of degrees at most 
$(n,m)$ with respect to $\cL$. Finally, from equations \eqref{4.32} and 
\eqref{4.36} we see that the entries of the vector $z\Psit_{n-1,m}^{(1)}$ are 
perpendicular to the entries of the vector $\Psit_{n-1,m}^{(2)}$. Therefore, 
all the entries in the vectors $z\Psit_{n-1,m}^{(1)}$ and $\Psit_{n-1,m}^{(2)}$ 
form an orthonormal set of $n$ polynomials, which can be extended by adding a 
polynomial $\psit_{n,m}^{n}(z,w)$ to an orthonormal set of polynomials of degree 
at most $(n,m)$, perpendicular to all polynomials of degree at most $(n,m-1)$. The transition matrix between this set and the orthonormal polynomials 
$\{\phit_{n,m}^{s}(z,w)\}_{s=0,1,\dots,n}$ is a unitary matrix whose transpose is a unitary matrix $\Vt$ satisfying 
equation \eqref{4.10b}. The proof now follows from \leref{le4.4}.
\end{proof}

\section{Proofs of the theorems in the splitting case}\label{se5}
\subsection{Proof of \thref{th2.5}}\label{ss5.1}

The proof of implication (i)$\Rightarrow$(ii) follows easily from \thref{th2.3}
and its tilde analog. Indeed, note that if \eqref{2.16} holds, then we have 
\begin{subequations}\label{5.1}
\begin{align}
\cL(z^kw^l)
&=\frac{1}{4\pi^2}\int\limits_{[-\pi,\pi]^2}\frac{e^{ik\theta}e^{il\varphi}}
{|P(e^{i\theta},e^{i\varphi})|^2}\,d\theta\,d\varphi\label{5.1a}\\
&=\frac{1}{4\pi^2}\int\limits_{[-\pi,\pi]^2}\frac{e^{ik\theta}e^{il\varphi}}
{|Q(e^{i\theta},e^{i\varphi})|^2}\,d\theta\,d\varphi\label{5.1b}
\end{align}
\end{subequations}
where 
\begin{subequations}\label{5.2}
\begin{equation}\label{5.2a}
\begin{split}
P(z,w)&=p(z,w)z^{n_2}q(1/z,w) \text{ is a polynomial of degree at}\\ 
&\text{most $(n,m)$, stable for $|z|=1$ and $|w|\leq 1$,}
\end{split}
\end{equation}
and
\begin{equation}\label{5.2b}
\begin{split}
Q(z,w)&=p(z,w)w^{m_2}\bar{q}(z,1/w) \text{ is a polynomial of degree at}\\ 
&\text{most $(n,m)$, stable for $|z|\leq 1$ and $|w|= 1$.}
\end{split}
\end{equation}
Therefore, equation \eqref{2.17a} follows from 
\thref{th2.3} (i)$\Rightarrow$(ii) for the polynomial $P(z,w)$ and 
equation \eqref{2.17b} follows from the tilde analog of \thref{th2.3} 
(i)$\Rightarrow$(ii) for the polynomial $Q(z,w)$.
\end{subequations}
Conversely, suppose that equations \eqref{2.17} hold. Then, by 
\thref{th2.3} (ii)$\Rightarrow$(i) and its tilde analog, we deduce that there 
exist polynomials $P(z,w)$ and $Q(z,w)$ of degrees at most $(n,m)$ such that equations \eqref{5.1} hold and 
\begin{itemize}
\item[{(a)}] $P(z,w)$ is stable for $|z|=1$ and $|w|\leq 1$
\item[{(b)}] $Q(z,w)$ is stable for $|z|\leq1$ and $|w|= 1$.
\end{itemize}
Without any restrictions, we can assume that $P(z,w)$ and $Q(z,w)$ are not 
divisible by $z$ and $w$. From equation \eqref{2.15} for $P(z,w)$ and $Q(z,w)$ we see that $|P(z,w)|^2=|Q(z,w)|^2$ for all 
$(z,w)\in\Tset^2$ which implies that in the ring of Laurent polynomials 
$\Cset[z,z^{-1},w,w^{-1}]$ we have
\begin{equation}\label{5.3}
P(z,w)\bar{P}(1/z,1/w)=Q(z,w)\bar{Q}(1/z,1/w).
\end{equation}
Suppose now that we factor $P(z,w)$ into a product of irreducible factors $p_j(z,w)$ in $\Cset[z,w]$. Likewise we can factor $Q(z,w)$ into a product of 
irreducible factors $q_l(z,w)$ in $\Cset[z,w]$. 
Using \eqref{5.3} we see that we can factor 
$P(z,w)\bar{P}(1/z,1/w)=Q(z,w)\bar{Q}(1/z,1/w)$ in $\Cset[z,z^{-1},w,w^{-1}]$ in 
two ways as a product of irreducible factors
\begin{equation}\label{5.4}
\begin{split}
&P(z,w)\bar{P}(1/z,1/w)
=\prod_{j}p_j(z,w)\pb_j(1/z,1/w)\\
&\qquad\qquad=\prod_{l}q_l(z,w)\qb_l(1/z,1/w)=Q(z,w)\bar{Q}(1/z,1/w).
\end{split}
\end{equation}
Since $\Cset[z,z^{-1},w,w^{-1}]$ is a unique factorization domain, it follows 
that for every $j$, there exists a unique $l$ such that exactly one of the 
following holds:
\begin{itemize}
\item[{(I)}]  $p_j(z,w)$ and $q_l(z,w)$ are associates in 
$\Cset[z,z^{-1},w,w^{-1}]$;
\item[{(II)}] $p_j(z,w)$ and $\qb_l(1/z,1/w)$ are associates in 
$\Cset[z,z^{-1},w,w^{-1}]$.
\end{itemize}
Note that the units in $\Cset[z,z^{-1},w,w^{-1}]$ are of the form $cz^sw^r$, 
where $c\neq 0$, $s,r\in\Zset$.  Thus, we see that if (I) holds then with the 
normalization chosen above ($P$ and $Q$ are not divisible by $z$ and $w$) we 
must have $p_j(z,w)=c q_l(z,w)$ where $c$ is a nonzero constant. From 
properties (a) and (b) of the polynomials $P(z,w)$ and $Q(z,w)$ we deduce 
that $p_j(z,w)=c q_l(z,w)\neq 0$ when $|z|=1$, $|w|\leq 1$, and 
likewise $p_j(z,w)=c q_l(z,w)\neq 0$ when $|z|\leq 1$, $|w|= 1$. 
This shows that if (I) holds, then 
$p_j(z,w)\neq 0$ when $|z|\leq 1$ and $|w|\leq 1$.

If (II) holds then $p_j(z,w)=cz^{s_j}w^{r_j}\qb_l(1/z,1/w)$ 
where $c\neq 0$, and $s_j,r_j$ are the minimal nonnegative integers for which 
$z^{s_j}w^{r_j}\qb_l(1/z,1/w)$ belongs to $\Cset[z,w]$. It is easy to see that 
$p_j(z,w)$ and $q_l(z,w)$ have the same degree $(s_j,r_j)$. From property (a) 
of the polynomial $P(z,w)$ we deduce that 
$z^{s_j}p_j(1/z,w)=cw^{r_j}\qb_l(z,1/w)\neq 0$ when 
$|z|=1$, $|w|\leq 1$. From property (b) of the polynomial $Q(z,w)$ we 
conclude that $z^{s_j}p_j(1/z,w)=cw^{r_j}\qb_l(z,1/w)\neq 0$ 
when $|z|\leq 1$, $|w|=1$. Thus we see that if (II) holds, then the 
polynomial $z^{s_j}p_j(1/z,w)$ has no zeros when $|z|\leq 1$ and $|w|\leq 1$.

Let $J_1$ (resp. $J_2$) denote the set of indices $j$ for which (I) 
(resp. (II)) holds. Then the polynomials $p(z,w)=\prod_{j\in J_1}p_j(z,w)$ and 
$q(z,w)=\prod_{j\in J_2}z^{s_j}p_j(1/z,w)$ satisfy the conditions in 
\thref{th2.5}(i), completing the proof.
\qed

\subsection{Proof of \thref{th2.7}}\label{ss5.2}
For the proof of \thref{th2.7} we summarize first some basic properties of the vector orthogonal polynomials $\Phi_{k,l}(z,w)$ associated with 
a moment functional $\cL$ of the form
\begin{align}
&\cL(z^kw^l)=\frac{1}{4\pi^2}\int\limits_{[-\pi,\pi]^2}\frac{e^{ik\theta}e^{il\varphi}}{|p(e^{i\theta},e^{i\varphi})|^2}\,d\theta\,d\varphi,\nn\\
&\quad\text{where $p(z,w)$ is of degree $(n,m)$ nonzero for $ |z|\leq1$ and $|w|\leq1$.}\label{5.5}
\end{align}
We define as usual $\pr(z,w)=z^{n}w^{m}\overline{p(1/\zb,1/\wb)}$. 
\begin{Lemma}\label{le5.1}
If \eqref{5.5} holds then 
\begin{subequations}\label{5.6}
\begin{align}
\cL(p(z,w)z^{-k}w^{-l})&=0 && \text{for all }\quad k\in\Zset, \quad l>0,\label{5.6.a}\\
\cL(\pr(z,w)z^{-k}w^{-l})&=0 && \text{for all }\quad k<n, \quad l\in\Zset.\label{5.6.b}
\end{align}
\end{subequations}
\end{Lemma}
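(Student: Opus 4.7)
The plan is to reduce each orthogonality statement to a one-dimensional Cauchy residue computation, exactly as in the proof of \leref{le4.1}. The main point is that the factor $p$ (resp.\ $\pr$) appearing in the integrand cancels the complex-conjugate factor in $|p|^2 = p\cdot\overline{p}$, leaving $1/\overline{p}$ (resp.\ $1/p$) in the denominator. After a harmless change of variables, each resulting inner integral reduces to a contour integral whose integrand is holomorphic on the closed unit disk in one of the variables, so Cauchy's theorem forces it to vanish and Fubini finishes the proof.

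Concretely for \eqref{5.6.a}, after cancellation
$$\cL(p(z,w)z^{-k}w^{-l}) = \frac{1}{4\pi^2}\int_{[-\pi,\pi]^2}\frac{e^{-ik\theta}e^{-il\varphi}}{\overline{p(e^{i\theta},e^{i\varphi})}}\,d\theta\,d\varphi,$$
I would substitute $(\theta,\varphi)\mapsto(-\theta,-\varphi)$. Using $\overline{p(e^{i\theta},e^{i\varphi})} = \bar{p}(e^{-i\theta},e^{-i\varphi})$ by definition, the integral becomes
$$\frac{1}{4\pi^2}\int_{[-\pi,\pi]^2}\frac{z^{k}w^{l}}{\bar{p}(z,w)}\,d\theta\,d\varphi,\qquad z=e^{i\theta},\ w=e^{i\varphi}.$$
Since $p$ is nonzero on the closed bidisk, so is $\bar{p}(z,w) = \overline{p(\bar{z},\bar{w})}$; fixing $|z|=1$ and converting the $\varphi$-integral to a contour integral over $|w|=1$ gives
$$\frac{1}{2\pi i}\oint_{|w|=1}\frac{w^{l-1}}{\bar{p}(z,w)}\,dw,$$
whose integrand is holomorphic on $|w|\leq 1$ for every $l\geq 1$. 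Cauchy's theorem yields $0$, and Fubini settles the first claim.

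For \eqref{5.6.b}, I would use that on $\Tset^2$
$$\pr(e^{i\theta},e^{i\varphi}) = e^{in\theta}e^{im\varphi}\overline{p(e^{i\theta},e^{i\varphi})},$$
so that the $\overline{p}$-factor in $\pr$ cancels the $\overline{p}$ in $|p|^{2}$, leaving
$$\cL(\pr(z,w)z^{-k}w^{-l}) = \frac{1}{4\pi^2}\int_{[-\pi,\pi]^2}\frac{z^{n-k}w^{m-l}}{p(z,w)}\,d\theta\,d\varphi.$$
This time I perform the $\theta$-integral first at fixed $|w|=1$, obtaining
$$\frac{1}{2\pi i}\oint_{|z|=1}\frac{z^{n-k-1}}{p(z,w)}\,dz.$$
For $k<n$ the exponent $n-k-1$ is nonnegative and, by stability of $p$, the denominator is nonvanishing on $|z|\leq 1$ for each $|w|=1$; so Cauchy once more gives $0$ and Fubini completes the proof. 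I do not foresee any genuine obstacle: the only point requiring a brief check is that $\bar{p}$ inherits nonvanishing on $\{|z|\leq 1,|w|\leq 1\}$ from $p$, which is immediate from $\bar{p}(z,w) = \overline{p(\bar{z},\bar{w})}$.
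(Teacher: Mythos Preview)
Your proof is correct and follows essentially the same route as the paper: both parts reduce to a one-variable Cauchy residue computation after cancelling the appropriate factor of $p$ or $\bar p$ from $|p|^2$, and the paper's proof of \eqref{5.6.a} simply cites \leref{le4.1}, whose proof is exactly this contour argument. Your substitution $(\theta,\varphi)\mapsto(-\theta,-\varphi)$ in the first part is a harmless variant of taking complex conjugates (equivalently, substituting $w\mapsto 1/w$ in the contour integral), and your treatment of \eqref{5.6.b} matches the paper's ``similar computation, evaluating first the $z$ integral.''
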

\begin{proof}
The proof of \eqref{5.6.a} follows immediately by computing first the $w$ integral and by using equation \eqref{4.3a} in \leref{le4.1}. The proof of \eqref{5.6.b} follows by a similar computation, by evaluating first the $z$ integral.
\end{proof}

\begin{Lemma}\label{le5.2}
Suppose that \eqref{5.5} holds. Then the vector polynomial $\Phi_{n,m}(z,w)$ has the following block structure
\begin{equation}\label{5.7}
\Phi_{n,m}(z,w)=\left[\begin{matrix} \pr(z,w)\\ \Phi_{n,m-1}(z,w) \end{matrix}\right].
\end{equation}
Moreover, 
\begin{equation}\label{5.8}
\cL(\Phi_{n,m-1}(z,w)z^{-k}w^{-l})=0 \qquad \text{for all }k<n, \quad l\geq0.
\end{equation}
\end{Lemma}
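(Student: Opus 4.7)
The plan is to first identify the top entry of $\Phi_{n,m}$ with $\pr$, then deduce the remaining block identity (5.7) and the orthogonality (5.8) in tandem.

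Step 1 (identifying $\phi_{n,m}^m$). Normalize $p$ so that $p(0,0)>0$, so that the lex-leading term of $\pr(z,w)=z^n w^m\bar p(1/z,1/w)$ is $\bar p_{00}z^n w^m$ with positive coefficient. Combining Lemma 5.1(b) integrated over $\theta$ (which gives $\pr\perp z^k w^l$ for all $l<m$ and $k\in\Zset$) with its tilde analog (giving $\pr\perp z^k w^l$ for all $k<n$ and $l\in\Zset$, valid because $p$ is stable in $z$ as well), we obtain $\pr\perp z^k w^l$ for every $(k,l)<_{\mathrm{lex}}(n,m)$. A direct residue computation using $1/|p|^2=z^n w^m/(p\pr)$ on the torus yields $\|\pr\|=1$, and uniqueness of the orthonormal polynomial with positive leading coefficient forces $\phi_{n,m}^m=\pr$.

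Step 2 (matching the lower entries). For $0\le s\le m-1$, uniqueness of $\phi_{n,m}^s$ reduces the equality $\phi_{n,m}^s=\phi_{n,m-1}^s$ to the extra orthogonality $\phi_{n,m-1}^s\perp z^k w^m$ for $0\le k<n$, which is the $l=m$ case of (5.8). Writing this moment as the double contour integral
\[
\cL(\phi_{n,m-1}^s z^{-k} w^{-m})
=\frac{1}{(2\pi i)^2}\oint\oint
\frac{\phi_{n,m-1}^s(z,w)\,z^{n-k-1}}{w\,p(z,w)\,\pr(z,w)}\,dz\,dw,
\]
we evaluate the $z$-integral first for fixed $|w|=1$. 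Since $p(\cdot,w)$ and $\pr(\cdot,w)$ have disjoint $z$-zero sets (lying in $|z|>1$ and $|z|<1$ respectively), the Bezout equation $A(z)\pr(z,w)+B(z)p(z,w)=z^{n-k-1}$ has a unique solution with $\deg_z A,\deg_z B\le n-1$. The $A/p$-contribution vanishes by Cauchy's theorem (holomorphic in $|z|\le 1$), while the $B/\pr$-contribution is a sum of residues at the $n$ zeros $\gamma_i(w)$ of $\pr(\cdot,w)$. A subsequent Cauchy argument on the resulting $w$-contour integral, combined with the defining orthogonality of $\phi_{n,m-1}^s$ against $\{z^a w^b:0\le a<n,\ 0\le b\le m-1\}$, forces this integral to vanish for $0\le k<n$. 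This establishes (5.7).

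Step 3 (completing (5.8)). For $0\le k<n$ and $0\le l\le m-1$, (5.8) is the defining orthogonality of $\phi_{n,m-1}^s$. For $l\ge m$, iterate (5.7) at higher $w$-levels: the same stable $p$ has degree at most $(n,m')$ for every $m'\ge m$, so Step 2 applied at level $m'$ gives $\phi_{n,m'-1}^s=\phi_{n,m'}^s$ for $s\le m'-1$; chaining these equalities yields $\phi_{n,m-1}^s=\phi_{n,m'}^s$ for every $m'\ge m-1$, and the orthogonality of $\phi_{n,m'}^s$ to $z^k w^l$ for $0\le l\le m'$ then covers all $l\ge 0$ as $m'\to\infty$. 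For $k<0$, Lemma 4.2 (applicable since $p$ is in particular nonvanishing for $|z|=1,\ |w|\le 1$) gives $\Eh_{k,m-1}=0$ for all $k\ge n+1$, hence $\Phi_{k,m-1}=z^{k-n}\Phi_{n,m-1}$; the orthogonality of $\Phi_{k,m-1}$ against $z^{-a}$ with $a<k$ then translates to orthogonality of $\Phi_{n,m-1}$ against $z^{-j}$ for arbitrarily large $j$.

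The main obstacle is the combined residue calculation in Step 2: once the $z$-integral has been reduced to a residue sum at the zeros of $\pr(\cdot,w)$, showing that the ensuing $w$-contour integral vanishes for $0\le k<n$ relies on the inverse-conjugate symmetry $\gamma_i=1/\bar\alpha_i$ between the zeros of $\pr(\cdot,w)$ and $p(\cdot,w)$, together with the lower-lex orthogonality built into $\phi_{n,m-1}^s$.
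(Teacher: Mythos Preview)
Your Step~1 is fine and indeed reproduces the identification $\phi_{n,m}^{m}=\pr$ by the standard uniqueness argument. The reduction in Step~2 is also correct: the equality $\phi_{n,m}^{s}=\phi_{n,m-1}^{s}$ for $s\le m-1$ is equivalent to the single extra orthogonality $\phi_{n,m-1}^{s}\perp z^{k}w^{m}$ for $0\le k<n$, and Step~3 is a legitimate bootstrap once Step~2 is in place.

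The gap is the residue argument you call ``the main obstacle.'' After the Bezout split $z^{n-k-1}/(p\pr)=A/p+B/\pr$ (with $A,B$ depending on the parameter $w$) and the vanishing of the $A/p$ contribution, you are left with
\[
\frac{1}{2\pi i}\oint_{|w|=1}\frac{1}{w}\Biggl(\sum_{i}\mathrm{Res}_{z=\gamma_i(w)}\,
\frac{\phi_{n,m-1}^{s}(z,w)\,B(z,w)}{\pr(z,w)}\Biggr)dw.
\]
At this point you invoke ``a subsequent Cauchy argument combined with the defining orthogonality of $\phi_{n,m-1}^{s}$,'' but no such combination is available: the defining orthogonality of $\phi_{n,m-1}^{s}$ is a statement about $\cL$-integrals over the torus against the weight $1/|p|^{2}$, whereas the expression above evaluates $\phi_{n,m-1}^{s}$ at the algebraic functions $\gamma_i(w)$ inside the disk, with weights coming from the Bezout coefficient $B(\cdot,w)$ that depend rationally on $w$. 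There is no mechanism that converts the residue sum back into an $\cL$-inner product against $z^{a}w^{b}$ with $b\le m-1$, and the inverse-conjugate symmetry $\gamma_i=1/\bar\alpha_i$ does not by itself produce such a relation. So as written, Step~2 is an assertion rather than an argument.

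The paper avoids this difficulty entirely. It takes \eqref{5.7} from \cite{GW2} and then proves \eqref{5.8} by substituting \eqref{5.7} and its tilde analog into the Christoffel--Darboux identity \eqref{3.2}. Specializing to $z\bar z_1=1$ and manipulating gives an explicit factorization
\[
\Phi_{n,m-1}(z,w)^{T}=\Bigl(p(z,w)\,\tfrac{A_m(z,1/w)}{w}+\pr(z,w)\,\tfrac{B_m(z,1/w)}{w}\Bigr)\bigl[\Phir^{\,m-1}_{n}(z)^{T}\bigr]^{-1},
\]
where $A_m,B_m$ are polynomials in $z$ and $1/w$, and the inverse matrix is analytic in $|z|\le 1$ because $\det\Phir^{\,m-1}_{n}(z)\neq 0$ there. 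The orthogonality \eqref{5.8} for all $k<n$ and $l\ge 0$ then follows immediately from \leref{le5.1}: the $p$-term is killed by \eqref{5.6.a} (the negative $w$-powers from $A_m/w$ make the exponent of $w$ strictly positive), and the $\pr$-term is killed by \eqref{5.6.b} (the nonnegative $z$-powers from the analytic factor keep the effective $z$-exponent below $n$). This is exactly the structural decomposition your Bezout idea is groping for, but obtained globally from the Christoffel--Darboux identity rather than pointwise in $w$ from a residue computation.
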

\begin{proof}
Equation \eqref{5.7} follows from Theorem~7.2 in \cite{GW2}.  Plugging \eqref{5.7} and its tilde analog in \eqref{3.2} we obtain the following identity
\begin{equation*}
\begin{split}
&p(z,w)\overline{p(z_1,w_1)}-\pr(z,w)\overline{\pr(z_1,w_1)}\\
&\qquad=(1-w\wb_1)\Phi_{n,m-1}(z,w)^{T}\,\overline{\Phi_{n,m-1}(z_1,w_1)}\\
&\qquad\quad+(1-z\zb_1)\Phitr_{n-1,m}(z,w)\,\Phitr_{n-1,m}(z_1,w_1)^{\dagger}.
\end{split}
\end{equation*}
Thus, if take $z=z_1$ on the unit circle the last term above will vanish and we can rewrite the equation as follows
\begin{equation*}
\frac{p(z,w)\pb(1/z,\wb_1)-\pr(z,w) z^{-n}\wb_1^{m}p(z,1/\wb_1)}{1-w\wb_1}=\Phi_{n,m-1}(z,w)^{T}\,\overline{\Phi_{n,m-1}(z,w_1)}.
\end{equation*}
Using the matrix-valued polynomial $\Phi^{m-1}_{n}(z)$ defined in \eqref{3.1a} and its reverse $\Phir^{m-1}_{n}(z)=z^{n}\overline{\Phi^{m-1}_{n}(1/\zb)^{T}}$ we can replace in the last equation $\overline{\Phi_{n,m-1}(z,w_1)}$ by 
$$z^{-n}\Phir^{m-1}_{n}(z)^{T}\left[\begin{matrix} \wb_1^{m-1} & \wb_1^{m-2} & \cdots &1\end{matrix}\right]^{T},$$
and therefore we obtain
\begin{equation}\label{5.9}
\frac{p(z,w)z^n\pb(1/z,\wb_1)-\pr(z,w) \wb_1^{m}p(z,1/\wb_1)}{1-w\wb_1}=\Phi_{n,m-1}(z,w)^{T}\,\Phir^{m-1}_{n}(z)^{T}
\left[\begin{matrix} \wb_1^{m-1} \\ \wb_1^{m-2} \\ \vdots \\1\end{matrix}\right].
\end{equation}
Let us denote by $S(z,w,\wb_1)$ the function on the left-hand side above. Note that we can rewrite $S(z,w,\wb_1)$ as follows
\begin{equation}\label{5.10}
S(z,w,\wb_1)=p(z,w)\frac{A(z,1/w,\wb_1)}{w}+\pr(z,w)\frac{B(z,1/w,\wb_1)}{w},
\end{equation}
where
\begin{equation*}
A(z,1/w,\wb_1)=\frac{z^{n}\pb(1/z,\wb_1)-z^{n}\pb(1/z,1/w)}{1/w-\wb_1}
\end{equation*}
and
\begin{equation*}
B(z,1/w,\wb_1)=\frac{w^{-m}p(z,w)-\wb_1^{m}p(z,1/\wb_1)}{1/w-\wb_1}
\end{equation*}
are polynomials in $z$, $1/w$ and $\wb_1$ of degrees at most $n$, $m-1$ and $m-1$, respectively. Thus, there exist 
$1\times m$ vectors $A_m(z,1/w)$ and $B_m(z,1/w)$ whose entries are polynomials in $z$ and  $1/w$ of degrees at most $n$ and $m-1$, respectively, such that 
$$A(z,1/w,\wb_1)=A_m(z,1/w)\left[\begin{matrix} \wb_1^{m-1} \\ \wb_1^{m-2} \\ \vdots \\1\end{matrix}\right]
\text{ and }
B(z,1/w,\wb_1)=B_m(z,1/w)\left[\begin{matrix} \wb_1^{m-1} \\ \wb_1^{m-2} \\ \vdots \\1\end{matrix}\right].$$
Combining the last equation with equations \eqref{5.9} and \eqref{5.10} we see that 
\begin{equation*}
p(z,w)\frac{A_m(z,1/w)}{w}-\pr(z,w)\frac{B_m(z,1/w)}{w} =\Phi_{n,m-1}(z,w)^{T}\,\Phir^{m-1}_{n}(z)^{T}.
\end{equation*}
From the theory of matrix-valued orthogonal polynomials we know that $\det(\Phir^{m-1}_{n}(z))\neq0$ for $|z|\leq1$. Therefore, 
the entries of the matrix $[\Phir^{m-1}_{n}(z)^{T}]^{-1}$ are analytic functions on closed unit disk $|z|\leq 1$ and we have
\begin{equation*}
\Phi_{n,m-1}(z,w)^{T}=\left(p(z,w)\frac{A_m(z,1/w)}{w}-\pr(z,w)\frac{B_m(z,1/w)}{w}\right)\left[\Phir^{m-1}_{n}(z)^{T}\right]^{-1}.
\end{equation*}
Equation \eqref{5.8} follows immediately from the last equation and \leref{le5.1}.
\end{proof}

\begin{proof}[Proof of \thref{th2.7}]
The block structure of the vector polynomials given in equation \eqref{2.18} follows immediately from equation \eqref{5.7} in \leref{le5.2}. 
Let us denote by $\cLp$ and $\cLq$ the positive moment functionals corresponding to the stable polynomials $p(z,w)$ and $q(z,w)$, i.e. 
\begin{equation*}
\cLp(z^kw^l)=\frac{1}{4\pi^2}\int\limits_{[-\pi,\pi]^2}\frac{e^{ik\theta}e^{il\varphi}}{|p(e^{i\theta},e^{i\varphi})|^2}\,d\theta\,d\varphi
\end{equation*}
and
\begin{equation*}
\cLq(z^kw^l)=\frac{1}{4\pi^2}\int\limits_{[-\pi,\pi]^2}\frac{e^{ik\theta}e^{il\varphi}}{|q(e^{i\theta},e^{i\varphi})|^2}\,d\theta\,d\varphi.
\end{equation*}
To prove that there exists a unitary matrix $U$ such that \eqref{2.19a} holds, 
it is enough to show two things:
\begin{itemize}
\item[{(i)}] the entries of the vector polynomial on the right-hand side of \eqref{2.19a} form an orthonormal set of polynomials of degrees 
at most $(n,m-1)$ with respect to $\cL$; 
\item[{(ii)}] the entries of the vector polynomial on the right-hand side of \eqref{2.19a} are orthogonal with respect to $\cL$ to all 
polynomials of degrees at most $(n-1,m-1)$. 
\end{itemize}
Clearly, the entries of the vector polynomial on the right-hand side of \eqref{2.19a} are polynomials of degrees at most $(n,m-1)$ and it is easy to see that they all have norm $1$. The fact that they are mutually orthogonal follows from the following direct computation
\begin{align*}
&\cL(z^{-n_2}\qb(z,1/w)\overline{\Phip_{n_1,m_1-1}}(1/z,1/w)\,\pr(z,w)w^{m_2-1}\overline{\Phiq_{n_2,m_2-1}}(z,1/w)^{T})\\
&\qquad
=-\frac{1}{4\pi^2}\int\limits_{\Tset}\left[\int\limits_{\Tset}\frac{\Phir^{p}_{n_1,m_1-1}(z,w)^{T}\, w^{m_2-1}\overline{\Phiq_{n_2,m_2-1}}(z,1/w)^{T}}
{p(z,w)z^{n_2}q(1/z,w)}\,dw\right]\frac{dz}{z}\\
&\qquad=0,
\end{align*}
since the $w$-integral is zero by Cauchy's residue theorem. Thus, it remains to check (ii). Below we compute the inner products with the monomials $z^{k}w^{l}$ where $0\leq k\leq n-1=n_1+n_2-1$ and $0\leq l\leq m-1=m_1+m_2-1$. 

For the first $m_1$ entries on the right-hand side of  \eqref{2.19a} we obtain
\begin{align*}
&\cL(z^{n_2}q(1/z,w)\Phip_{n_1,m_1-1}(z,w) z^{-k}w^{-l})\\
&\qquad=-\frac{1}{4\pi^2}\int\limits_{\Tset^2}\frac{\Phip_{n_1,m_1-1}(z,w) z^{n_2-k}w^{-l}}{|p(z,w)|^2\qb(z,1/w)}\frac{dz}{z}\frac{dw}{w}\\
&\qquad=\cLp\left(\Phip_{n_1,m_1-1}(z,w) \;\frac{z^{n_2-k}}{w^{l}\qb(z,1/w)}\right)=0,
\end{align*}
by equation \eqref{5.8} in \leref{le5.2} and the computation for the last $m_2$ entries on the right-hand side of  \eqref{2.19a} is similar. 
The fact that there exists a unitary matrix $V$ such that \eqref{2.19b} holds 
can be established along the same lines.
\end{proof}

\begin{Remark}\label{re5.3}
The decomposition in \thref{th2.7} can be naturally connected to a decomposition of a Christoffel-Darboux type formula. Indeed, for a polynomial $h(z,w)$ let us consider the corresponding  Christoffel-Darboux kernel
\begin{equation*}
L^{h}(z,w;\eta) = \frac{h(z,w)\overline{h(1/\zb,\eta)} - \hr(z,w)\overline{\hr(1/\bar{z},\eta)}}{1-w\bar{\eta}}.
\end{equation*}
Using the notations in \thref{th2.7}, we set $q^{w}(z,w)=z^{n_2}q(1/z,w)$ and $h(z,w)=p(z,w)q^{w}(z,w)$. Then it is easy to see that 
$$L^{h}(z,w;\eta)=q^{w}(z,w)\overline{q^{w}(1/\zb,\eta)}L^{p}(z,w;\eta)+\pr(z,w)\overline{\pr(1/\zb,\eta)}L^{q^{w}}(z,w;\eta).$$
The point now is that the polynomials $p(z,w)$ and $q(z,w)$ are stable for $|z|\leq 1$, $|w|\leq 1$ and therefore the corresponding kernels possess a great many orthogonality relations (see for instance \cite{GIK}) which can be used to prove equations \eqref{2.19} and thus give an alternate proof of the implication (i)$\Rightarrow$(ii) in \thref{th2.5}. 

It is a challenging problem to find a direct algebro-geometric proof of the implication (i)$\Rightarrow$(ii) in \thref{th2.3} (i.e. if we have stability only with respect to one of the variables). If we use the notations in \thref{th2.3} and \leref{le4.4}, the heart of the problem is the following:  start with a polynomial $p(z,w)$ which is stable for $|z|=1$ and $|w|\leq 1$ and give an explicit description (or prove existence) of the spaces $H_1$ and $H_2$, where $H_j$ is the space spanned by the entries of the vector polynomials $\Psit^{(j)}_{n-1,m}(z,w)$ in \leref{le4.4}. These spaces must be mutually orthogonal and must satisfy additional extra orthogonality properties in view of equation \eqref{4.10b}. Since the orthogonality relations can be expressed in terms of residues, constructing bases for these spaces amounts to an interesting interpolation problem on a zero-dimensional variety, which involves appropriate zeros of $p(z,w)$ and $\pr(z,w)$. Equivalently, this would
  give a subtle decomposition of the Christoffel-Darboux kernel associated with $p(z,w)$.
\end{Remark}

\subsection{Proofs of Corollaries \ref{co2.8} and \ref{co2.9}}\label{ss5.3}
\begin{proof}[Proof of \coref{co2.8}]
The statement in (i) is proved in \cite[Theorem 7.2]{GW2} but we sketch it briefly below since it follows easily from the constructions in this paper. If \eqref{2.20} holds then the defining relation \eqref{2.12c} for $\cK_{n,m}$ and \leref{le5.2} show that $\cK_{n,m}=0$. Conversely, suppose that $\cK_{n,m}=0$. Equation \eqref{2.11c} and its tilde analog imply that 
\begin{equation}\label{5.11}
\Phi_{n,m}(z,w)=\left[\begin{matrix} \phi_{n,m}^{m}(z,w)\\[-6pt] \\ \Phi_{n,m-1}(z,w) \end{matrix}\right], \quad 
\Phit_{n,m}(z,w)=\left[\begin{matrix} \phit_{n,m}^{n}\\[-6pt]\\ \Phit_{n-1,m}(z,w) \end{matrix}\right].
\end{equation}
Using the second equation above and \leref{le4.4} (with $\Ut$ and $\Vt$ being the identity matrices), we see that equation \eqref{2.20} holds where $p(z,w)=\phir_{n,m}^{m}(z,w)$ is stable for $|z|=1$ and $|w|\leq1$. Since $\phi_{n,m}^{m}(z,w)=\phit_{n,m}^{n}(z,w)$ we can use the first equation in \eqref{5.11} and the tilde analog of \leref{le4.4} to deduce that $p(z,w)$ is stable also for $|z|\leq 1$ and $|w|=1$, which shows that $p(z,w)$ is stable for $|z|\leq 1$ and $|w|\leq 1$ completing the proof of (i).\\

Suppose now that \eqref{2.21} holds. Applying \thref{th2.7} and its tilde analog we see that there exist unitary matrices $U\in M^{m,m}$, $\Ut\in M^{n,n}$ such that 
\begin{equation*}
\Phi_{n,m-1}(z,w)=w^{m-1}U\overline{\Phiq_{n,m-1}}(z,1/w), \quad 
\Phit_{n-1,m}(z,w)=z^{n-1}\Ut\tilde{\Phi}^q_{n-1,m}(1/z,w).
\end{equation*}
Plugging these formulas in the definition \eqref{2.12e} of $\cK^{1}_{n,m}$ we find 
$$\cK^{1}_{n,m}=U\,\overline{\langle \Phiq_{n,m-1}(1/z,w) , \tilde{\Phi}^q_{n-1,m}(1/z,w)\rangle}\, \Ut^{T}.$$
Note that the inner product in the expression above gives the matrix $\cK_{n,m}$ for the measure $\frac{d\theta\,d\varphi}{4\pi^2|q(e^{i\theta},e^{i\varphi})|^2}$, and therefore is zero from the first part of the corollary. Thus, $\cK^{1}_{n,m}=0$. 

Conversely, suppose now that $\cK^{1}_{n,m}=0$. From \eqref{2.11d} and its tilde analog we see that there exist unitary matrices $V\in M^{m+1,m+1}$, $\Vt\in M^{n+1,n+1}$ such that
\begin{equation}\label{5.12}
V^{\dagger}\Phi_{n,m}(z,w)=\left[\begin{matrix} \psi_{n,m}^{m}(z,w)\\ w\Phi_{n,m-1}(z,w) \end{matrix}\right], \quad 
\Vt^{\dagger}\Phit_{n,m}(z,w)=\left[\begin{matrix} \psit_{n,m}^{n}(z,w)\\ z\Phit_{n-1,m}(z,w) \end{matrix}\right].
\end{equation}
From \leref{le4.4} we deduce that \eqref{2.21} holds with $q(z,w)=z^n\psitr_{n,m}^{n}(1/z,w)$, which is stable for $|z|=1$ and $|w|\leq1$. 
We want to show next that $\psit_{n,m}^{n}(z,w)$ and $\psir_{n,m}^{m}(z,w)$ are equal up to a unimodular constant, i.e.
\begin{equation}\label{5.13}
\psit_{n,m}^{n}(z,w)=\epsilon \psir_{n,m}^{m}(z,w), \text{ where }|\epsilon|=1.
\end{equation}
Note that if we can prove the above equation, we can use the tilde analog of  \leref{le4.4} to deduce that $q(z,w)=\bar{\epsilon}z^n\psi_{n,m}^{m}(1/z,w)$ is stable for $|z|\leq 1$ and $|w|=1$, thus proving that $q(z,w)$ is stable for $|z|\leq 1$ and $|w|\leq1$.

The proof of \eqref{5.13} follows from the characteristic properties of $\psi_{n,m}^{m}(z,w)$ and $\psit_{n,m}^{n}(z,w)$. Indeed, from the first equation in \eqref{5.12} it is easy to see that $\psi_{n,m}^{m}(z,w)$ is the unique (up to a unimodular constant) orthonormal vector in $\Pinm$ such that 
$$\psi_{n,m}^{m}(z,w)\perp\{z^kw^l:0\leq k\leq n-1,\; 0\leq l\leq m\}\cup\{z^nw^l:1\leq l\leq m\}.$$
Similarly, from the second equation in \eqref{5.12} we see that $\psit_{n,m}^{m}(z,w)$ is the unique (up to a unimodular constant) orthonormal vector in $\Pinm$ such that 
$$\psit_{n,m}^{m}(z,w)\perp\{z^kw^l:0\leq k\leq n,\; 0\leq l\leq m-1\}\cup\{z^kw^m:1\leq k\leq n\}.$$
The above characteristic properties of $\psi_{n,m}^{m}(z,w)$ and $\psit_{n,m}^{n}(z,w)$ establish \eqref{5.13}, thus completing the proof.
\end{proof}

\begin{proof}[Proof of \coref{co2.9}]
Assume first that $\cL(z^kw^l)=\cL_z(z^k)\cL_w(w^l)$. If we denote by $\{\alpha_k(z)\}_{0\leq k\leq n}$ the (one-variable) polynomials orthonormal with respect to $\cL_z$ and by $\{\beta_l(w)\}_{0\leq l\leq m}$ the (one-variable) polynomials orthonormal with respect to $\cL_w$, then it easy to see that 
\begin{equation*}
\Phi_{n,m}(z,w)=\alpha_n(z)\left[\begin{matrix} \beta_{m}(w)\\ \beta_{m-1}(w)\\[-2pt]
\vdots\\ \beta_{0}(w) \end{matrix}\right] , 
\text{ and }
\Phit_{n,m}(z,w)=\beta_m(w)\left[\begin{matrix} \alpha_{n}(z)\\ \alpha_{n-1}(z)\\[-2pt]
\vdots\\ \alpha_{0}(z) \end{matrix}\right]. 
\end{equation*}
From these explicit formulas and the defining relations \eqref{2.12c}, \eqref{2.12e} for $\cK_{n,m}$ and $\cK^{1}_{n,m}$ it easy to see that $\cK_{n,m}=\cK^{1}_{n,m}=0$.

Conversely, suppose that $\cK_{n,m}=\cK^{1}_{n,m}=0$. Note that if $h(z,w)$ is a polynomial of degree $(k,l)$ such that $h(z,w)$ and $z^{k}h(1/z,w)$ are stable, then $h(z,w)$ is independent of $z$ (i.e. $k=0$). Using this observation, \coref{co2.8} and arguments similar to the ones we used in the proof of the implication (ii)$\Rightarrow$(i) in \thref{th2.5}, we see that $\phir_{n,m}^{m}(z,w)=\alpha(z)\beta(w)$, where $\alpha(z)$ and $\beta(w)$ are stable polynomials of degrees at most $n$ and $m$, respectively and that equation  \eqref{2.22} holds, completing the proof.
\end{proof}

\subsection{Proof of \thref{th2.10}}\label{ss5.4}

With the measure $d\mu$ we will associate the positive moment functional 
$\cL$ defined on $\Cset[z,z^{-1},w,w^{-1}]$ by 
\begin{equation}\label{5.14}
\cL(z^kw^l)
=\int\limits_{\Tset^2}z^kw^ld\mu.
\end{equation} 

First suppose that equation \eqref{2.23} holds where $p(z,w)$ and 
$q(z,w)$ are stable polynomials of degrees $(n_1,m_1)$ and $(n_2,m_2)$, 
respectively, with $n_1+n_2\leq n$, $m_1+m_2\leq m$. Then we can represent 
$\cL$ as in equations \eqref{5.1} where $P(z,w)$ and $Q(z,w)$ are given in 
equations \eqref{5.2}. Using \leref{le4.2} and its tilde analog, we see that 
equation \eqref{2.25} holds. To complete the proof of the theorem, it remains 
to show that equation \eqref{2.24} implies the existence of stable polynomials 
$p(z,w)$ and $q(z,w)$ of degrees $(n_1,m_1)$ and $(n_2,m_2)$, 
with $n_1+n_2\leq n$, $m_1+m_2\leq m$ such that \eqref{2.23} holds. 
From \leref{le4.3} and its tilde analog we see that 
\begin{subequations}\label{5.15}
\begin{align}
&\cK_{k,l}\left[\Gt^{1}_{k,l}\Gt^{\dagger}_{k,l}\right]^j(\cK^{1}_{k,l})^{T}=0, 
\text{ for all }j\geq 0,\quad k\geq n+2,\quad l\geq m+2, \label{5.15a}\\
&(\cK_{k,l})^{\dagger}\left[\G^{1}_{k,l}\G^{\dagger}_{k,l}\right]^j\cK^{1}_{k,l}=0, 
\text{ for all }j\geq 0,\quad k\geq n+2,\quad l\geq m+2. \label{5.15b}
\end{align}
\end{subequations}
By \thref{th2.5} we deduce that there exist stable polynomials 
$p(z,w)$ and $q(z,w)$ of degrees $(n_1,m_1)$ and $(n_2,m_2)$, 
with $n_1+n_2\leq n+2$, $m_1+m_2\leq m+2$ such that \eqref{2.16} holds for all 
$(k,l)$ satisfying $|k|\leq n+2$, $|l|\leq m+2$. Moreover, from equations 
\eqref{2.15} we see that 
\begin{equation}\label{5.16}
\begin{split}
&p(z,w)\pb(1/z,1/w)q(1/z,w)\qb(z,1/w)\\
&=\Phit_{n+2,m+2}(z,w)^{T}\overline{\Phit_{n+2,m+2}}(1/z,1/w)
-\Phit_{n+1,m+2}(z,w)^{T}\overline{\Phit_{n+1,m+2}}(1/z,1/w) \\
&=\Phi_{n+2,m+2}(z,w)^{T}\overline{\Phi_{n+2,m+2}}(1/z,1/w)
-\Phi_{n+2,m+1}(z,w)^{T}\overline{\Phi_{n+2,m+1}}(1/z,1/w).
\end{split}
\end{equation}
Recall that if $\Eh_{k,l}=0$ then $A_{k,l}=I_{l+1}$ and therefore by 
\eqref{2.11a} we obtain 
\begin{subequations}\label{5.17}
\begin{equation}\label{5.17a}
\Phi_{k,l}(z,w)=z\Phi_{k-1,l}(z,w).
\end{equation}
Similarly, if $\Eht_{k,l}=0$ then
\begin{equation}\label{5.17b}
\Phit_{k,l}(z,w)=w\Phi_{k,l-1}(z,w).
\end{equation}
\end{subequations}
Using \eqref{2.24} we see that equations \eqref{5.17} hold for all $k\geq n+1$
and $l\geq m+1$, which combined with \eqref{5.16} shows that 
\begin{subequations}\label{5.18}
\begin{align}
&p(z,w)\pb(1/z,1/w)q(1/z,w)\qb(z,1/w)\nn\\
&=\Phit_{k,l}(z,w)^{T}\overline{\Phit_{k,l}}(1/z,1/w)
-\Phit_{k-1,l}(z,w)^{T}\overline{\Phit_{k-1,l}}(1/z,1/w) \label{5.18a}\\
\intertext{ for $k\geq n+2$, $l\geq m$, and }
&p(z,w)\pb(1/z,1/w)q(1/z,w)\qb(z,1/w)\nn\\
&=\Phi_{k,l}(z,w)^{T}\overline{\Phi_{k,l}}(1/z,1/w)
-\Phi_{k,l-1}(z,w)^{T}\overline{\Phi_{k,l-1}}(1/z,1/w)\label{5.18b}
\end{align}
\end{subequations}
for $k\geq n$, $l\geq m+2$.
From \thref{th2.3} we see that equation \eqref{2.16} holds for all 
$k,l\in\Zset$ which establishes \eqref{2.23}. It remains to show now that in 
fact  $n_1+n_2\leq n$ and $m_1+m_2\leq m$. To see this, we will use the 
following two observations:
\begin{itemize}
\item[{(i)}] If $p(z,w)$ and $q(z,w)$ are stable polynomials of degrees $(n_1,m_1)$ and $(n_2,m_2)$, then $P(z,w)=p(z,w)z^{n_2}q(1/z,w)$ is a polynomial of degree $(n_1+n_2,m_1+m_2)$ which is not divisible by $z$ and $w$ (i.e. $P(0,w)\not\equiv0$ and $P(z,0)\not\equiv0$).
\item[{(ii)}] If $P(z,w)$ is a polynomial of degree $(n_0,m_0)$, which is not 
divisible by $z$ and $w$ such that $P(z,w)\bar{P}(1/z,1/w)\in\Pinm$, then 
$n_0\leq n$ and $m_0\leq m$.
\end{itemize}
From (i) we see that the polynomial $P(z,w)=p(z,w)z^{n_2}q(1/z,w)$ is a 
polynomial of degree $(n_1+n_2,m_1+m_2)$ which is not divisible by $z$ and $w$. 
From equation \eqref{5.18a} with $k=n+2$, $l=m$ and equation \eqref{5.18b} with 
$k=n$, $l=m+2$ we see that 
$$P(z,w)\bar{P}(1/z,1/w)\in \Pi^{n+2,m}\cap\Pi^{n,m+2}=\Pinm,$$  
which combined with (ii) completes the proof.
\qed

\section{Examples}\label{se6}
We now consider some examples that exhibit the properties of the theorems
proved earlier. 
\subsection{One-sided}
Our first example  will be a polynomial of degree (2,2) that 
is stable for $|z|=1$, $|w|\le 1$. We will construct the polynomial using the 
algorithm given in \cite{GW2}. 
Setting  $u_{0,0}=1$, $u_{2,0}=1/4, u_{-1,2}=\frac{1-a^2}{1+a^2}$, 
$u_{2,2}=-\frac{\sqrt{15}(1-a^2)}{60 a}$, $u_{-2,2}=-\frac{a(1-a^2)}{(1+a^2)^2}$ 
with $(-3+\sqrt{13})/2< a< (3+\sqrt{13})/2$ and $u_{i,j}=0$, for 
$(i,j)\in\{(0,1),(1,0),(0,2),(-1,1),(1,1),(1,2),(-2,1),(2,1)\}$ we construct 
the orthogonal polynomials up to level (2,2). In this case we find using Maple or Mathematica that
$$
{\cK}_{2,2}=\frac{2\sqrt{15}(1-a^2)}{15(1+a^2)}\left[\begin{matrix}0&0\\ 2&-1\end{matrix}\right],
$$
$$
{\cK}^1_{2,2}=-\frac{\sqrt{15}(1-a^2)}{60a}\begin{bmatrix} 1 & 2 \\0&0 \end{bmatrix},
$$
$$
\G_{2,2}=\begin{bmatrix} 0&1&0\\0&0&\frac{\sqrt{3c}}{3(1+a^2)}\end{bmatrix},
$$
$$
\G^1_{2,2}=\begin{bmatrix}\frac{2\sqrt{3d}}{3\sqrt{c}}&0&\frac{\sqrt{3}(1-a^4)}{12 a\sqrt{c}}\\0&1&0\end{bmatrix},
$$
$$
\tilde{\G}_{2,2}=\begin{bmatrix}0&\frac{5\sqrt{c}}{\sqrt{e}}&\frac{8\sqrt{15}(1-a^2)^2}{15(1+a^2)\sqrt{f}}\\0&0&\frac{\sqrt{15 f}}{15(1+a^2)} \end{bmatrix},
$$
and
$$
\tilde{\G}^1_{2,2}=\begin{bmatrix} \frac{\sqrt{5d}}{2\sqrt{c}}&-\frac{(1-a^4)(1-a^2)\sqrt{e}}{4af\sqrt{c}}&\frac{\sqrt{15}g}{30a\sqrt{f}}\\0&\frac{5(1+a^2)\sqrt{c}}{2a\sqrt{e}}&\frac{4\sqrt{15}(1-a^2)^2}{15a\sqrt{f}} \end{bmatrix}
$$
where $c=14 a^2-a^4-1$, $d=11a^2-a^4-1$, $e=55+190 a^2+55 a^4$, $f=11+38 a^2+11 a^4$, $g=4a^4+7a^2+4$.

It is not difficult to see that ${\cK}_{2,2}({\cK}^1_{2,2})^T=0={\cK}_{2,2}^{\dag}{\cK}_{2,2}^1$, and using Maple or Mathematica we find 
$${\cK}_{2,2}\tilde{\G}^1_{2,2}\tilde{\G}_{2,2}^{\dag}({\cK}^1_{2,2})^T=0$$ 
while 
\begin{equation}\label{kdg1gk}
{\cK}_{2,2}^{\dag}{\G}^1_{2,2}{\G}_{2,2}^{\dag}{\cK}^1_{2,2}=\frac{(1-a^2)^2}{15a(1+a^2)}\begin{bmatrix}-1&-2\\\frac{1}{2}&1\end{bmatrix}\ne0.
\end{equation}
Using \reref{re4.5} we find a candidate for $p(z,w)$ is 
\begin{align*}
\hat{p}(z,w)=&(4a(1-a^2)w^2-3(1+a^2)^2)z^2+3((1-a^4)w^2+3a(1+a^2))z\\
&\qquad-13a(1-a^2)w^2+12a^2.
\end{align*}
Using the Schur-Cohn test it is not difficult to see that $\hat{p}(z,w)$ is nonzero for $|z|=1$ and $|w|\le1$.
Applying \leref{le4.6} (see also \reref{re4.7}) we see that equations \eqref{4.23} and \eqref{4.24} hold with $r=r^{1}=1$, $s_1=\frac{2(1-a^2)}{\sqrt{3}(1+a^2)}$, $s^{1}_1=\frac{1-a^2}{4\sqrt{3}a}$,
$$
U^{1}=U=\begin{bmatrix}0&1\\1&0\end{bmatrix},
\text{ and }
\Ut=\frac{1}{\sqrt{5}}\begin{bmatrix}2&-1\\-1&-2\end{bmatrix}.$$
Next we find $\Psit_{1,2}(z,w)$ in \eqref{4.10a} by computing
$$
\Psit_{1,2}(z,w)=\Ut^{\dagger}\Phit_{1,2}(z,w)=\frac{\sqrt{5}}{10a}\begin{bmatrix}4azw^2-w^2-a^2 w^2+z-a^2 z\\
 -2azw^2-2w^2-2a^2w^2+2z-2a^2 z\end{bmatrix}.
$$
We look for a unitary matrix $\Vt$ such that equation \eqref{4.10b} holds with 
$n_1=n_2=1$.

This uniquely specifies $\tilde V$ (except the first column, which can be multiplied by an arbitrary complex number of modulus $1$) as
$$
\Vt=\begin{bmatrix} \frac{\sqrt{3}a}{\sqrt{c}} & \frac{\sqrt{d}}{\sqrt{c}} & 0\\
\frac{2\sqrt{3}(1+a^2)\sqrt{d}}{\sqrt{cf}} & -\frac{6a(1+a^2)}{\sqrt{fc}}&-\frac{\sqrt{c}}{\sqrt{f}}\\
-\frac{\sqrt{d}}{\sqrt{f}} & \frac{\sqrt{3}a}{\sqrt{f}} & -\frac{2\sqrt{3}(1+a^2)}{\sqrt{f}}
\end{bmatrix}.
$$
The first entry of the vector polynomial $\Psit_{2,2}(z,w)=\Vt^{\dagger}\Phit_{2,2}(z,w)$  is $\frac{\sqrt{15}}{30a\sqrt{d}}\overleftarrow{\hat p}(z,w)$. Thus, \leref{le4.4} shows that $p(z,w)=\frac{\sqrt{15}}{30a\sqrt{d}}\hat p(z,w)$.  

\subsection{Splitting case}

The second example we will consider illustrates \thref{th2.5}.
In this case we chose $u_{0,0}=1$, $u_{-1,1}=a$, $u_{1,1}=b$, $u_{2,0}=ab=u_{0,2}$
and $u_{i,j}=0,\ (i,j)\in\{ (0,1),(1,0),(1,2),(-1,2),(2,1),(-2,1),(2,2),(-2,2)\}$ where $-1<a<1$ and $-1<b<1$. Using the algorithm given in \cite{GW2} we find
that
$$
{\cK}_{2,2}=\begin{bmatrix}a&0\\0&0 \end{bmatrix},
$$
$$
{\cK}^{1}_{2,2}=\begin{bmatrix}0&0\\0&b \end{bmatrix},
$$
$$
\G_{2,2}=\begin{bmatrix}0&\sqrt{1-a^2}&0\\0&0&1 \end{bmatrix}=\tilde{\G}_{2,2},
$$
$$
\G^1_{2,2}=\begin{bmatrix}1&0&0\\0&\sqrt{1-b^2}&0 \end{bmatrix}=\tilde{\G}^1_{2,2}.
$$
It is easy to check that equations~\eqref{2.17} are satisfied. Moreover, we 
find that
\begin{equation}\label{PPP}
P(z,w)\bar{P}(1/z,1/w)=\Phit_{2,2}(z,w)^{T}\overline{\Phit}_{2,2}(1/z,1/w)
-\Phit_{1,2}(z,w)^{T}\overline{\Phit}_{1,2}(1/z,1/w)
\end{equation}
where $$P(z,w)=\frac{(1-bzw)(z-aw)}{\sqrt{(1-a^2)(1-b^2)}}$$
is stable for $|z|=1$ and $|w|\leq 1$. It is easy to see that the polynomial 
$P(z,w)$ above is the unique polynomial (up to a multiplicative constant of 
modulus 1) of degree at most $(2,2)$ which is stable for $|z|=1$ and 
$|w|\leq 1$ and which satisfies \eqref{PPP}. Finally, note that
$$P(z,w)=p(z,w)zq(1/z,w)$$
where
$$p(z,w)=\frac{1-bzw}{\sqrt{1-b^2}}, \quad \text{ and }\quad q(z,w)=\frac{1-azw}{\sqrt{1-a^2}}$$ 
are stable polynomials. 
We can obtain all this also by following the steps of Example 1. Indeed, we see that we can take $U=\Ut=I_2$ the identity 
$2\times 2$ matrix and
$$\Vt=\begin{bmatrix}0&1&0\\1&0&0\\0&0&1\end{bmatrix}.$$
The first entry of the vector polynomial $\Psit_{2,2}(z,w)=\Vt^{\dagger}\Phit_{2,2}(z,w)$ is $P(z,w)$. Note that 
if $a=0$ then ${\cK}_{2,2}=0$ and the functional is in the stable case, 
while if $b=0$ then ${\cK}^1_{2,2}=0$ and the functional is in the 
anti-stable case.

\end{document}